\numberwithin{equation}{section}
\theoremstyle{plain}
\newtheorem{Theorem}{Theorem}[section]
\newtheorem{Lemma}[Theorem]{Lemma}
\newtheorem{Corollary}[Theorem]{Corollary}
\newtheorem*{Corollary*}{Corollary}
\theoremstyle{definition}
\newtheorem{Definition}[Theorem]{Definition}
\newtheorem{Remark}[Theorem]{Remark}
\newcommand{\R}{\mathbb{R}}
\newcommand{\Q}{\mathbb{Q}}
\newcommand{\Z}{\mathbb{Z}}
\newcommand{\F}{\mathbb{F}}
\newcommand{\Zp}{{\Z_p}}
\newcommand{\Qp}{{\Q_p}}
\newcommand{\id}{\mathrm{id}}
\newcommand{\Cdot}{\raisebox{-.8ex}{\scalebox{2}{$\cdot$}}}
\newcommand{\Hom}{\mathrm{Hom}}
\newcommand{\Ext}{\mathrm{Ext}}
\newcommand{\Stab}{\mathrm{Stab}}
\newcommand{\Rep}{\mathrm{Rep}}
\newcommand{\cok}{\mathrm{cok}}
\newcommand{\pr}{\mathrm{pr}}
\newcommand{\JSF}{\mathrm{JSF}}
\newcommand{\ASF}{\mathrm{ASF}}
\newcommand{\sign}{\mathrm{sign}}
\DeclareMathOperator{\ch}{ch}
\DeclareMathOperator{\tch}{tch}
\providecommand{\abs}[1]{\lvert#1\rvert}
\title{Coxeter combinatorics for sum formulas\\in the representation theory of algebraic groups}
\author{Jonathan Gruber}
\date{\today}
\begin{document}
	
\maketitle

\begin{abstract}
	Let $G$ be a simple algebraic group over an algebraically closed field $\F$ of characteristic $p\geq h$, the Coxeter number of $G$. We observe an easy `recursion formula' for computing the Jantzen sum formula of a Weyl module with $p$-regular highest weight. We also discuss a `duality formula' that relates the Jantzen sum formula to Andersen's sum formula for tilting filtrations and we give two different representation theoretic explanations of the recursion formula. As a corollary, we also obtain an upper bound on the length of the Jantzen filtration of a Weyl module with $p$-regular highest weight in terms of the length of the Jantzen filtration of a Weyl module with highest weight in an adjacent alcove.
\end{abstract}

\section*{Introduction}
\addcontentsline{toc}{section}{Introduction}

One of the most important problems in the representation theory of reductive algebraic groups in positive characteristic $p$ is to find the composition multiplicities of Weyl modules. When the characteristic is very large then these multiplicities can be computed as the values at $1$ of certain Kazhdan-Lusztig polynomials\footnote{This was proved by a reduction from algebraic groups to quantum groups \cite{AndersenJantzenSoergel} and from quantum groups to affine Lie algebras \cite{KL12,KL34}. The case of affine Lie algebras was treated in \cite{KashiwaraTanisaki}. In the non-simply-laced case, additional results from \cite{Lusztig} and \cite{KashiwaraTanisaki2} were required.}, but the $p$-canonical basis that provides a replacement for the Kazhdan-Lusztig basis in arbitrary characteristic\footnote{To be more precise, the $p$-canonical basis determines the multiplicities in good filtrations of tilting modules \cite{SmithTreumannTheory}. These in turn determine the characters of simple modules (and hence the composition multiplicities of Weyl modules) by the results of \cite{Sobaje}.} quickly becomes prohibitively hard to compute in higher rank.
For a specific weight $\lambda$, another tool for computing partial information about composition multiplicities in the Weyl module $\Delta(\lambda)$ is the Jantzen sum formula. By lifting to the $p$-adic integers a non-zero homomorphism from $\Delta(\lambda)$ to the induced module $\nabla(\lambda)$, one can define the \emph{Jantzen filtration}, an exhaustive descending filtration
\[ \Delta(\lambda)\supseteq \Delta(\lambda)^1 \supseteq \Delta(\lambda)^2 \supseteq \cdots \]
such that $\Delta(\lambda) / \Delta(\lambda)^1 = L(\lambda)$ is the unique simple quotient of $\Delta(\lambda)$ and the sum of the characters of the submodules $\Delta(\lambda)^i$ for $i>0$ can be computed explicitly using the \emph{Jantzen sum formula}. But again, the computation of this formula becomes quite tedious in large rank because it requires the determination of the dominant conjugates under the (finite) Weyl group $W_\mathrm{fin}$ of $G$ of many non-dominant weights (with respect to the dot action).

In this paper, we give a `recursion formula' that can be used to compute the Jantzen sum formula for a Weyl module with $p$-regular highest weight from the Jantzen sum formula for a Weyl module with highest weight in an adjacent alcove. It is convenient to identify the character lattice of a fixed $p$-regular linkage class with the anti-spherical module $M_\mathrm{asph}$ over the integral group ring of the affine Weyl group $W_\mathrm{aff}$ of $G$. The characters of the Weyl modules in the linkage class form a basis of its character lattice which is canonically indexed by the set $W_\mathrm{aff}^+$ of elements $x\in W_\mathrm{aff}$ whose length is minimal in the coset $W_\mathrm{fin} x$. These characters correspond to the standard basis $\{ N_x \mid x\in W_\mathrm{aff}^+ \}$ of $M_\mathrm{asph}$ in the aforementioned identification. Similarly, the Jantzen sum formula for a Weyl module indexed by $x\in W_\mathrm{aff}^+$ corresponds to an element $\JSF_x\in M_\mathrm{asph}$. With this notation in place, we can now state our recursion formula; see Section \ref{sec:reflections} for the definition of the affine reflection $s_{\beta,m} \in W_\mathrm{aff}$.
\medskip

\noindent
\textbf{Recursion formula.} \textit{ 
	Let $x\in W_\mathrm{aff}^+$ and let $s\in W_\mathrm{aff}$ be a simple reflection such that $x<xs$ in the Bruhat order and $xs \in W_\mathrm{aff}^+$. Write $xsx^{-1} = s_{\beta,m}$ for some $m>0$ and a positive root $\beta$. Then
	\[ \JSF_{xs} = \nu_p( m \cdot p ) \cdot N_x + \JSF_x \cdot s . \] }

In \cite{AndersenFiltrationsTilting}, H.H. Andersen defined a filtration on the space of homomorphisms from a Weyl module to a tilting module. Its construction is similar to that of the Jantzen filtration and the sum of the dimensions of the subspaces appearing in the filtration is given by the \emph{Andersen sum formula}. In Section \ref{sec:rewriting}, we explain how the Andersen sum formula for a fixed Weyl module indexed by $x\in W_\mathrm{aff}^+$ can be viewed as an element $\ASF_x$ of the dual $M_\mathrm{asph}^*$ of the anti-spherical module. Writing
\[ \langle - \,, - \rangle \colon M_\mathrm{asph}^* \times M_\mathrm{asph} \longrightarrow \Z \]
for the evaluation pairing and $\{ N_x^* \mid x\in W_\mathrm{aff}^+ \}$ for the `dual basis' of the standard basis of $M_\mathrm{asph}$, we can relate the two sum formulas by a simple `duality formula'.
\medskip

\noindent
\textbf{Duality formula.} \textit{ For $x,y\in W_\mathrm{aff}^+$, we have
\[ \big\langle \ASF_x, N_y \big\rangle = \big\langle N_x^*, \JSF_y \big\rangle . \] }

The formula is new in this formulation, but it has already appeared implicitly in work of H.H. Andersen and U. Kulkarni \cite{AndersenKulkarni}, see Remark \ref{rem:singularduality}.

Both the recursion formula and the duality formula are purely combinatorial observations at first, so one might wonder if it is possible to find a representation theoretic explanation for them as well (or to categorify them, as one might say). We give two versions of such representation theoretic explanations in Section \ref{sec:interpretation}. The first one (Theorem \ref{thm:JSF}) relates the cokernels of the natural maps from Weyl modules to induced modules with highest weights in adjacent alcoves (over the p-adic integers); these cokernels in turn determine the respective Jantzen sum formulas. The second one is via a `torsion Euler characteristic' that had already been used by H.H. Andersen and U. Kulkarni to study the two sum formulas (see Lemma \ref{lem:recursionEulercharacteristic} and Remark \ref{rem:recursionEulercharacteristic}).
As a corollary of Theorem \ref{thm:JSF}, we also obtain an upper bound on the length of the Jantzen filtration of a Weyl module with $p$-regular highest weight in terms of the length of the Jantzen filtration of a Weyl module with highest weight in an adjacent alcove.

\begin{Corollary*}
	Let $\lambda$ be a dominant weight in the fundamental alcove and let $x\in W_\mathrm{aff}^+$. Further let $s\in W_\mathrm{aff}$ be a simple reflection such that $x<xs$ and $xs\in W_\mathrm{aff}^+$ and write $xsx^{-1} = s_{\beta,m}$ for some $m>0$ and a positive root $\beta$. Then
	\[ \max\big\{ i \geq 0 \mathrel{\big|} \Delta(xs\Cdot \lambda)^i \neq 0 \big\} \leq 2 \cdot \max\big\{ i \geq 0 \mathrel{\big|} \Delta(x\Cdot \lambda)^i \neq 0 \big\} + \nu_p(m \cdot p) . \]
\end{Corollary*}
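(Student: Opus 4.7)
The first step is to reinterpret the length of the Jantzen filtration as a torsion invariant of a $\Zp$-module. The Jantzen filtration of $\Delta(\mu)$ is built from a $G_{\Zp}$-equivariant lift $\phi_\mu\colon \Delta_{\Zp}(\mu)\to \nabla_{\Zp}(\mu)$ of the canonical map, and writing $\phi_\mu$ in Smith normal form one reads off
\[ \max\{\,i\geq 0 \mid \Delta(\mu)^i\neq 0\,\} \;=\; \min\{\,N\geq 0 \mid p^N\cdot\cok(\phi_\mu) = 0\,\}. \]
Denote this common value by $\ell_\mu$; the corollary then reduces to the inequality $\ell_{xs\Cdot\lambda}\leq 2\,\ell_{x\Cdot\lambda} + \nu_p(m\cdot p)$.

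Next, I would feed this reformulation into Theorem \ref{thm:JSF}, which is the representation-theoretic avatar of the recursion formula $\JSF_{xs}=\nu_p(m\cdot p)\cdot N_x+\JSF_x\cdot s$. I expect the theorem to produce a short exact sequence of $G_{\Zp}$-modules
\[ 0 \longrightarrow A \longrightarrow \cok(\phi_{xs\Cdot\lambda}) \longrightarrow B \longrightarrow 0, \]
whose outer terms correspond to the two summands of the recursion formula: one of $A,B$ should be isomorphic to a translate of $\cok(\phi_{x\Cdot\lambda})$ (answering to the term $\JSF_x\cdot s$, and hence annihilated by $p^{\ell_{x\Cdot\lambda}}$), while the other is an extension of a translate of $\cok(\phi_{x\Cdot\lambda})$ by a $\Zp$-module of exponent dividing $p^{\nu_p(m\cdot p)}$ (answering to the term $\nu_p(m\cdot p)\cdot N_x$, and hence annihilated by $p^{\ell_{x\Cdot\lambda} + \nu_p(m\cdot p)}$).

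Then I would conclude with the elementary fact that in a short exact sequence of $\Zp$-modules $0\to A\to B\to C\to 0$, if $p^a\cdot A = 0$ and $p^c\cdot C = 0$ then $p^{a+c}\cdot B = 0$. Applying this once to resolve the inner extension, and once to the outer sequence above, yields
\[ p^{\,2\,\ell_{x\Cdot\lambda}+\nu_p(m\cdot p)}\cdot\cok(\phi_{xs\Cdot\lambda}) \;=\; 0, \]
which is precisely $\ell_{xs\Cdot\lambda}\leq 2\,\ell_{x\Cdot\lambda}+\nu_p(m\cdot p)$, as required.

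The main obstacle is the precise identification of the subquotients $A$ and $B$ supplied by Theorem \ref{thm:JSF}: the recursion formula only asserts that their \emph{characters} sum to $\JSF_{xs}$, whereas for the torsion argument one needs each piece to carry an honest annihilator bound of the stated shape. Once that is in hand, the factor of $2$ in the conclusion is forced, since torsion exponents add (rather than max) in short exact sequences, and the recursion formula features $\JSF_x\cdot s$ in place of $\JSF_x$, contributing a second copy of $\ell_{x\Cdot\lambda}$ to the final bound.
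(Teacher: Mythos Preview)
Your approach is exactly the paper's: reinterpret the filtration length as the maximal torsion $\mathrm{tmax}\big(\cok(c_\mu)\big)$, feed the exact sequences from Theorem \ref{thm:JSF} into the subadditivity of $\mathrm{tmax}$ on short exact sequences, and use that translation functors (being direct summands of tensoring with a $\Zp$-free module) do not increase $\mathrm{tmax}$.

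The one place where your expectation is slightly off is the shape of the output of Theorem \ref{thm:JSF}, which you correctly flagged as the main obstacle. The theorem does not hand you a single short exact sequence for $\cok(c_{xs})$ with one outer term literally equal to $\Theta_s\cok(c_x)$. Instead it gives four sequences in auxiliary modules $A,B,C,D,E$:
\begin{align*}
0 &\to A \to \Theta_s\cok(c_x) \to C \to 0,\\
0 &\to B \to \cok(c_{xs}) \to C \to 0,\\
0 \to D &\to A \to \cok(c_x) \to E \to 0,\\
0 \to D &\to B \to \nabla_x/p^m \to E \to 0,
\end{align*}
so that \emph{both} pieces $B$ and $C$ of $\cok(c_{xs})$ are merely subquotients of $\Theta_s\cok(c_x)$. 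Your torsion argument then runs verbatim by chaining: $\mathrm{tmax}\big(\cok(c_{xs})\big)\leq \mathrm{tmax}(B)+\mathrm{tmax}(C)$, $\mathrm{tmax}(B)\leq \mathrm{tmax}(D)+m$, $\mathrm{tmax}(D)\leq\mathrm{tmax}(A)$, and $\mathrm{tmax}(A),\mathrm{tmax}(C)\leq\mathrm{tmax}\big(\Theta_s\cok(c_x)\big)\leq\mathrm{tmax}\big(\cok(c_x)\big)$, which gives the claimed $2\,\mathrm{tmax}\big(\cok(c_x)\big)+m$.
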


We give an example of how the recursion formula can be applied in Section \ref{sec:anexample}.

\subsection*{Acknowledgements}
\addcontentsline{toc}{subsection}{Acknowledgements}

The author would like to thank Donna Testerman and Thorge Jensen for numerous discussions and comments on the subject of this paper. Thanks are also due to Henning Haahr Andersen, Geordie Williamson and an anonymous referee for helpful comments on the manuscript. This project was supported by the Swiss National Science Foundation under grant number FNS 200020\textunderscore175571.

\section*{Notation} \label{sec:notation}
\addcontentsline{toc}{section}{Notation}

Let $G$ be a simply connected simple algebraic group over an algebraically closed field $\F$ of positive characteristic $p$. Let $X$ be the weight lattice of $G$ (with respect to some fixed maximal torus) and denote by $\Phi\subseteq X$ the root system of $G$ with positive system $\Phi^+$ corresponding to a base $\Delta$ of $\Phi$. We write $X^+$ for the set of dominant weights, $\rho$ for the half sum of positive roots and $\alpha_\mathrm{h}$ for the highest short root in $\Phi$. Let $W_\mathrm{fin}$ be the (finite) Weyl group of $G$ and fix a $W_\mathrm{fin}$-invariant inner product $\langle - \,, - \rangle$ on $X_\R=X \otimes_\Z \R$. For $\alpha\in\Phi^+$, we write $\alpha^\vee=2\alpha/\langle\alpha,\alpha\rangle$ for the coroot of $\alpha$
and $s_\alpha \in W_\mathrm{fin}$ for the reflection with
$ s_\alpha(x)=x-\langle x , \alpha^\vee \rangle \cdot \alpha $
for $x\in X_\R$.
We consider the dot action of $W_\mathrm{fin}$ on $X_\R$, given by
\[ w \Cdot x \coloneqq w( x+\rho ) - \rho \]
for $w\in W_\mathrm{fin}$ and $x\in X_\R$.

We write $\Rep(G)$ for the category of finite-dimensional $G$-modules and for any finite-dimensional $G$-module $M$, we write $\ch M \in \Z[X]^{W_\mathrm{fin}}$ for the character of $M$ and $[M]$ for the class of $M$ in the Grothendieck group $[\Rep(G)]$ of $\Rep(G)$.
For $\lambda\in X^+$, we denote by $L(\lambda)$, $\Delta(\lambda)$, $\nabla(\lambda)$ and $T(\lambda)$ the simple module, the Weyl module, the induced module and the indecomposable tilting module of highest weight $\lambda$, respectively, and we write $\chi_\lambda = \ch \Delta(\lambda) = \ch \nabla(\lambda)$.
See Sections II.2.1--4, II.2.13 and II.E.4 in \cite{Jantzen} for the definitions of these modules and note that $\Delta(\lambda)$ and $\nabla(\lambda)$ are denoted by $V(\lambda)$ and $H^0(\lambda)$ there, respectively.
The characters $\chi_\lambda$, for $\lambda\in X^+$, form a basis of the $\Z$-module $\Z[X]^{W_\mathrm{fin}}$ and for a $G$-module $M$ with
$\ch M = \sum_\lambda a_\lambda \cdot \chi_\lambda$,
we define $[ \ch M : \chi_\lambda ]\coloneqq a_\lambda$.
For arbitrary $\lambda\in X$, there exists a unique $\lambda^\prime\in W_\mathrm{fin}\Cdot\lambda$ with $\lambda^\prime+\rho \in X^+$ and if $\lambda^\prime\in X^+$ then there is a unique $w\in W_\mathrm{fin}$ with $\lambda=w\Cdot \lambda^\prime$. For such $\lambda^\prime$ and $w$, we define
\[ \chi_\lambda = \begin{cases}
\det(w) \cdot \chi_{\lambda^\prime} & \text{if } \lambda^\prime\in X^+ , \\ 0 & \text{otherwise}
\end{cases} \]
and
\[ [ \ch M : \chi_\lambda ] = \begin{cases}
\det(w) \cdot [ \ch M : \chi_{\lambda^\prime} ] & \text{if } \lambda^\prime \in X^+ \\
0 & \text{otherwise} .
\end{cases} \]
For a $G$-module $M$ and an indecomposable $G$-module $N$, we write $[ M : N ]_\oplus$ for the multiplicity of $N$ in a Krull-Schmidt decomposition of $M$.
Let $\Z_p$ be the ring of $p$-adic integers and denote by $G_{\Z_p}$ the algebraic group scheme over $\Z_p$ corresponding to $G$.
For $\lambda\in X^+$, we can define a Weyl module $\Delta_{\Z_p}(\lambda)$ and an induced module $\nabla_{\Z_p}(\lambda)$ over $G_{\Z_p}$ such that both $\Delta_\Zp(\lambda)$ and $\nabla_\Zp(\lambda)$ are free over $\Zp$ and $\Delta_{\Z_p}(\lambda)\otimes \F \cong \Delta(\lambda)$ and $\nabla_{\Z_p}(\lambda)\otimes \F \cong \nabla(\lambda)$ as $G$-modules. By Section II.B.4 in \cite{Jantzen}, we have
\[ \Ext_{G_\Zp}^i\big( \Delta_\Zp(\lambda) , \nabla_\Zp(\mu) \big) \cong \begin{cases}
\Zp & \text{if } \lambda=\mu \text{ and }i=0, \\
0 & \text{otherwise} 
\end{cases} \]
for all $\lambda,\mu\in X^+$ and $i\geq 0$. For each $\lambda\in X^+$, we fix a generator $c_\lambda$ of $\Hom_{G_\Zp}\big( \Delta_\Zp(\lambda) , \nabla_\Zp(\mu) \big)$.

\section{Filtrations and sum formulas} \label{sec:filtrations}

For any homomorphism of $\Z_p$-modules $\varphi \colon M \to N$, we can define a descending filtration
\[ M = F^0(\varphi) \supseteq F^1(\varphi) \supseteq F^2(\varphi) \supseteq \cdots \]
with $F^i(\varphi) \coloneqq \varphi^{-1}(p^i \cdot N)$ for $i\geq 0$. For $\lambda\in X^+$ and $i>0$, we write $\Delta(\lambda)^i$ for the submodule that is spanned by the image of $F^i(c_\lambda)$ in $\Delta(\lambda) \cong \Delta_\Zp(\lambda) \otimes \F$ and call the descending filtration
\[ \Delta(\lambda) = \Delta(\lambda)^0 \supseteq \Delta(\lambda)^1 \supseteq \Delta(\lambda)^2 \supseteq \cdots \]
the \emph{Jantzen filtration}. As $c_\lambda$ is unique up to a unit in $\Z_p$, this is independent of the choice of $c_\lambda$. Partial information about the layers of this filtration can be obtained from the following formula (see Section II.8.19 in \cite{Jantzen}):

\begin{Theorem}[Jantzen sum formula] \label{thm:citeJSF}
	For $\lambda\in X^+$, we have
	\[ \JSF_\lambda \coloneqq \sum_{i>0} \ch \Delta(\lambda)^i = - \sum_{\alpha\in\Phi^+} ~ \sum_{0 < m < \langle \lambda + \rho , \alpha^\vee \rangle} \nu_p(m) \cdot \chi_{\lambda-m\alpha} \]
	and
	\[ \Delta(\lambda) / \Delta(\lambda)^1 \cong L(\lambda) . \]
\end{Theorem}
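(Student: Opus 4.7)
The plan is to reduce the sum formula to a $p$-adic determinant computation on each weight space, identify this determinant via a product formula for the contravariant form on $\Delta_\Zp(\lambda)$, and finally convert the resulting exponential sum into a sum of Weyl characters.

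First, I would establish the elementary $p$-adic lemma: for any $\Z_p$-linear map $\varphi \colon M \to N$ between free $\Z_p$-modules of the same rank whose extension to $\Q_p$ is invertible, Smith normal form yields
\[ \sum_{i > 0} \dim_\F \overline{F^i(\varphi)} = \nu_p(\det \varphi) , \]
where $\overline{F^i(\varphi)}$ denotes the image of $F^i(\varphi)$ in $M \otimes_{\Z_p} \F$. Applied weight-space-by-weight-space to $c_\lambda$ (invertible over $\Q_p$ because $\Delta_{\Q_p}(\lambda) \cong \nabla_{\Q_p}(\lambda)$ is simple), this gives
\[ \sum_{i > 0} \ch \Delta(\lambda)^i = \sum_\mu \nu_p\bigl(\det c_{\lambda, \mu}\bigr) \cdot e^\mu . \]

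The central task is then to match the right-hand side with the claimed sum over pairs $(\alpha, m)$. This is the content of Jantzen's determinant formula for Weyl modules over $\Z_p$: up to a $p$-adic unit, $\det c_{\lambda, \mu}$ should factor as a product indexed by positive roots $\alpha$ and positive integers $m < \langle \lambda + \rho, \alpha^\vee \rangle$, with exponents tracking the $\mu$-weight multiplicities of a Weyl module whose highest weight is governed by the affine reflection $s_{\alpha, m} \Cdot \lambda$. The standard derivation has two steps: (i) a \emph{vanishing step}, showing that the determinant is divisible by $m$ whenever $\langle \lambda + \rho, \alpha^\vee \rangle > m$, via a Shapovalov-style null-vector construction in the hyperalgebra of $G$ over $\Z_p$; and (ii) a \emph{degree-matching step}, using the Weyl character formula applied to the difference $\chi_\lambda - \chi_{s_{\alpha, m} \Cdot \lambda}$ to pin down the exponents. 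Summing over $\mu$ and regrouping exponentials $e^\mu$ into $\chi$'s via alternating $W_\mathrm{fin}$-sums, while respecting the sign conventions for non-dominant weights from the notation section, yields the asserted identity $-\sum_{\alpha, m} \nu_p(m) \cdot \chi_{\lambda - m\alpha}$.

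The main obstacle is the vanishing step (i); in Jantzen's original treatment this is handled by constructing a universal deformation of $c_\lambda$ over $\Z_p[[T]]$ so that the factorisation of the determinant into linear factors in $T$ holds generically and then specialises at $T = 0$. The remaining assertion $\Delta(\lambda) / \Delta(\lambda)^1 \cong L(\lambda)$ is immediate from the setup: $\Delta(\lambda)^1$ coincides with the kernel of the reduction $\bar c_\lambda \colon \Delta(\lambda) \to \nabla(\lambda)$ of $c_\lambda$ modulo $p$, which is nonzero because $c_\lambda$ generates the free $\Z_p$-module $\Hom_{G_\Zp}(\Delta_\Zp(\lambda), \nabla_\Zp(\lambda))$ of rank one, and any nonzero map $\Delta(\lambda) \to \nabla(\lambda)$ has image isomorphic to $L(\lambda)$ (the unique common composition factor of the simple head of $\Delta(\lambda)$ and the simple socle of $\nabla(\lambda)$).
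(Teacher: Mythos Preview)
The paper does not prove this theorem at all; it is quoted as a classical result with a reference to Section~II.8.19 of \cite{Jantzen}. So there is no ``paper's own proof'' to compare against.

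Your outline is a faithful summary of the standard argument in Jantzen's book: reduce to a weight-space-by-weight-space determinant computation via the elementary divisor lemma (essentially what the paper later records as Lemma~\ref{lem:DimensionSumFiltrationLength}), identify $\nu_p(\det c_{\lambda,\mu})$ through a product formula, and regroup into Weyl characters. One minor correction on the ``vanishing step'': Jantzen's treatment for algebraic groups (II.8.14--17) does not use a formal deformation over $\Z_p[[T]]$; that device belongs to the Verma module/Shapovalov setting. Instead, he factors $c_\lambda$ (up to a unit) as a product of operators $T_\alpha$ along a reduced expression for the longest element $w_0$, computes the determinant of each $T_\alpha$ on a weight space directly, and multiplies. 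Your argument for $\Delta(\lambda)/\Delta(\lambda)^1 \cong L(\lambda)$ is correct and is the standard one.
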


Now let $T$ be a tilting $G_\Zp$-module. For $\lambda\in X^+$, we define
\[ F_\lambda(T) \coloneqq \Hom_{G_\Zp}\big( \Delta_\Zp(\lambda) , T \big) \qquad \text{and} \qquad E_\lambda(T) \coloneqq \Hom_{G_\Zp}\big( T , \nabla_\Zp(\lambda) \big) . \]
Both $F_\lambda(T)$ and $E_\lambda(T)$ are free over $\Zp$, see Section II.E.23 in \cite{Jantzen}.
There is a canonical homomorphism of $\Z_p$-modules $d_\lambda \colon F_\lambda(T) \to E_\lambda(T)^*$ with
\[ e \circ f = d_\lambda(f)(e) \cdot c_\lambda \]
for $f\in F_\lambda(T)$ and $e\in E_\lambda(T)$ and we define $F_\lambda^i(T) \coloneqq F^i(d_\lambda)$. Now if $T$ is a tilting $G$-module then there is a tilting $G_\Zp$-module $\hat{T}$ (unique up to isomorphism) with $T \cong \hat{T}\otimes \F$ and we write $\bar{F}_\lambda^i(T)$ for the subspace that is spanned by the image of $F_\lambda^i(\hat{T})$ in $\Hom_G\big( \Delta(\lambda) , T \big)\cong F_\lambda(\hat{T})\otimes \F$. The descending filtration
\[ \Hom_G\big( \Delta(\lambda) , T \big) = \bar{F}_\lambda^0(T) \supseteq \bar{F}_\lambda^1(T) \supseteq \bar{F}_\lambda^1(T) \supseteq \cdots \]
is called the \emph{Andersen filtration}.
As for the Jantzen filtration, there is a sum formula that gives partial information about the layers of the filtration; see Theorem 5.2 in \cite{AndersenKulkarni}.

\begin{Theorem}[Andersen sum formula] \label{thm:citeASF}
	For $\lambda\in X^+$ and a tilting $G$-module $T$, we have
	\[ \ASF(\lambda,T) \coloneqq \sum_{i>0} \dim \bar{F}_\lambda^i(T) = - \sum_{\alpha\in\Phi^+} ~ \sum_{ m \notin I(\lambda,\alpha)} \nu_p(m) \cdot [ \ch T : \chi_{\lambda-m\alpha} ] , \]
	where $I(\lambda,\alpha)=\{ m\in\Z \mid 0 \leq m \leq \langle \lambda+\rho , \alpha^\vee \rangle \}$,
	and
	\[ \dim\big( \bar{F}_\lambda^0(T) / \bar{F}_\lambda^1(T) \big) = [ T : T(\lambda) ]_\oplus . \]
\end{Theorem}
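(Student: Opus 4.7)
The plan is to adapt the proof of the Jantzen sum formula (Theorem \ref{thm:citeJSF}) to the pairing $d_\lambda$ in place of the canonical map $c_\lambda$. First, I would lift $T$ to a tilting $G_\Zp$-module $\hat{T}$ with $\hat{T} \otimes \F \cong T$ and decompose $\hat{T} \cong \bigoplus_\mu T_\Zp(\mu)^{a_\mu}$ with $a_\mu = [T:T(\mu)]_\oplus$. Using the Weyl filtration and the good filtration of $\hat{T}$ over $\Zp$ together with the Ext-vanishing for pairs $(\Delta_\Zp(\mu), \nabla_\Zp(\nu))$, both $F_\lambda(\hat{T})$ and $E_\lambda(\hat{T})^*$ are free $\Zp$-modules of the same rank $r_\lambda = (\hat{T} : \Delta_\Zp(\lambda)) = (\hat{T} : \nabla_\Zp(\lambda))$. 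Since $\hat{T} \otimes \Qp$ is a semisimple $G_{\Qp}$-module and the induced pairing restricts to the perfect evaluation pairing on the $L_{\Qp}(\lambda)$-isotypic component, $d_\lambda \otimes \Qp$ is an isomorphism and $\det d_\lambda \neq 0$.

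Next, Jantzen's filtration lemma applied to $d_\lambda$ yields
\[ \sum_{i \geq 1} \dim \bar{F}_\lambda^i(T) = \nu_p(\det d_\lambda) = \text{length}_{\Zp}(\cok d_\lambda) , \]
by the same mechanism used in the proof of Theorem \ref{thm:citeJSF}. Since length is additive under direct sum decompositions, this reduces the computation to the case of an indecomposable tilting summand $T_\Zp(\mu)$.

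The main computation is then to identify $\nu_p(\det d_\lambda^{T_\Zp(\mu)})$ with the $\mu$-component of the character formula. Using a Weyl filtration and a good filtration of $T_\Zp(\mu)$, I would put $d_\lambda^{T_\Zp(\mu)}$ in (block-)triangular form: any composition $\Delta_\Zp(\lambda) \hookrightarrow T_\Zp(\mu) \twoheadrightarrow \nabla_\Zp(\lambda)$ factors as $\alpha \cdot c_\lambda$ for some $\alpha \in \Zp$, and the valuations $\nu_p(\alpha)$ of the diagonal entries encode the Jantzen-type contributions coming from the composition factors of $T_\Zp(\mu)$ other than those of weight $\lambda$. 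Summing these over all $\mu$, weighted by $a_\mu$, and rewriting the result in terms of $[\ch T : \chi_{\lambda - m\alpha}]$ then produces the claimed formula with indexing $m \notin I(\lambda, \alpha)$. The supplementary assertion $\dim(\bar{F}_\lambda^0 / \bar{F}_\lambda^1) = [T:T(\lambda)]_\oplus$ follows because only the $T_\Zp(\lambda)$-summands contribute unit diagonal entries to $d_\lambda$ (their natural composition $\Delta_\Zp(\lambda) \hookrightarrow T_\Zp(\lambda) \twoheadrightarrow \nabla_\Zp(\lambda)$ equals $c_\lambda$ up to a unit), whereas tilting summands $T_\Zp(\mu)$ for $\mu > \lambda$ contribute entries with strictly positive $p$-adic valuation and thus populate $\bar{F}_\lambda^i$ only for $i \geq 1$.

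The principal obstacle is the combinatorial bookkeeping in the last step: converting the \emph{interior} Jantzen contributions indexed by $0 < m < \langle \mu + \rho, \alpha^\vee \rangle$ for each Weyl piece $\Delta_\Zp(\mu)$ of $\hat{T}$ into the \emph{exterior} range $m \notin I(\lambda, \alpha)$ required at the fixed weight $\lambda$. This rests on the sign convention $\chi_{\lambda - m\alpha} = \det(w) \cdot \chi_{(\lambda - m\alpha)^\prime}$ and on carefully tracking how the relevant affine reflections exchange interior Jantzen contributions at the various $\mu$ for exterior Andersen contributions at $\lambda$, with signs that produce the overall minus on the right-hand side.
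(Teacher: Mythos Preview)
The paper does not prove Theorem \ref{thm:citeASF} at all: it is stated with the attribution ``see Theorem 5.2 in \cite{AndersenKulkarni}'' and used as a black box. There is therefore no proof in the paper to compare your proposal against.

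As to the proposal itself: the overall framework is sound. Lifting to $\Zp$, observing that $F_\lambda(\hat T)$ and $E_\lambda(\hat T)^*$ are free of equal rank with $d_\lambda\otimes\Qp$ invertible, and invoking the filtration lemma to get $\sum_{i>0}\dim\bar F_\lambda^i(T)=\nu_p(\det d_\lambda)$ is exactly how the argument begins in \cite{AndersenKulkarni}. The supplementary claim about $\bar F_\lambda^0/\bar F_\lambda^1$ is also handled correctly in spirit.

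The weak point is the passage you flag as the ``principal obstacle''. Your description suggests assembling the Andersen formula at $\lambda$ from Jantzen-type contributions at the various $\mu$ appearing in a Weyl filtration of $\hat T$, and then somehow exchanging ``interior'' ranges $0<m<\langle\mu+\rho,\alpha^\vee\rangle$ for the ``exterior'' range $m\notin I(\lambda,\alpha)$. This is not how the computation goes in \cite{AndersenKulkarni}, and as written it is not clear that such a conversion can be carried out: the diagonal entries you obtain by triangularising $d_\lambda$ against a pair of filtrations are not individually expressible as Jantzen sums for the layers, and the sign/index bookkeeping you allude to does not obviously collapse to the stated exterior sum. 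Andersen and Kulkarni instead compute $\nu_p(\det d_\lambda)$ via the torsion Euler characteristic $E(\Delta_\Zp(\lambda),\hat T)$ (cf.\ Subsection \ref{subsec:Eulercharacteristic}), and it is that computation which produces the range $m\notin I(\lambda,\alpha)$ directly. If you want to complete your route, you would need to make the triangularisation explicit and give a precise bijection (with signs) between the resulting valuations and the terms on the right-hand side; as it stands, that step is a gap rather than a routine calculation.
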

Note that the inner sum on the right hand side of the Andersen sum formula is infinite, but the multiplicity $[ \ch T : \chi_{\lambda-m\alpha} ]$ is zero for all but finitely many values of $m$.

\section{Reflections in the affine Weyl group} \label{sec:reflections}

The representation theory of $G$ is governed to a large extent by the affine Weyl group $W_\mathrm{aff}$ of $G$ (to be defined below) and the alcove geometry associated with its $p$-dilated dot action. In order to formulate our main results, we need to introduce some more notation and establish certain properties of reflections in $W_\mathrm{aff}$. Most of the material in this section is presumably well-known to experts.

The \emph{affine Weyl group} of $G$ is the semidirect product $W_\mathrm{aff}=\Z\Phi \rtimes W_\mathrm{fin}$ and we write $\gamma\mapsto t_\gamma$ for the canonical embedding of $\Z\Phi$ into $W_\mathrm{aff}$.
For $\alpha\in\Phi^+$ and $m\in\Z$, we define an affine reflection $s_{\alpha,m} \in W_\mathrm{aff}$ by $s_{\alpha,m} \coloneqq t_{m\alpha}s_{\alpha}$. With $S=\{ s_\alpha \mid \alpha\in\Delta \} \cup \{ s_{\alpha_\mathrm{h},1} \}$, the pair $(W_\mathrm{aff},S)$ is a Coxeter system whose length function we denote by $\ell \colon W_\mathrm{aff} \to \Z_{\geq 0}$; see Section II.6.3 in \cite{Jantzen} and the references therein. We write $W_\mathrm{aff}^+$ for the set of elements $x\in W_\mathrm{aff}$ whose length is minimal in the coset $W_\mathrm{fin}x$.

We consider the $p$-dilated dot action of $W_\mathrm{aff}$ on $X_\R$, given by
\[ t_\gamma w \Cdot x \coloneqq w( x+\rho ) + p \cdot \gamma - \rho \]
for $\gamma\in\Z\Phi$, $w\in W_\mathrm{fin}$ and $x\in X_\R$. 
The fixed points of an affine reflection $s=s_{\alpha,m}$ (with respect to the $p$-dilated dot action) in $X_\R$ form an affine hyperplane
\[ H_s = H_{\alpha,m}^p = \{ x\in X_\R \mid \langle x + \rho , \alpha^\vee \rangle = mp \} \]
and the connected components of $X_\R \setminus \bigcup_{\alpha,m} H_{\alpha,m}^p$ are called the \emph{alcoves} in $X_\R$. We say that a hyperplane $H$ \emph{separates} two alcoves $C$ and $C^\prime$ if $C$ and $C^\prime$ are contained in different connected components of $X_\R\setminus H$ and we call two alcoves \emph{adjacent} if they are separated by a unique hyperplane. The alcove
\[ C_\mathrm{fund} = \{ x\in X_\R \mid 0 < \langle x+\rho , \alpha^\vee \rangle < p \text{ for all }\alpha\in\Phi^+ \} \]
is called the \emph{fundamental alcove}.
The closure $\overline{C}_\mathrm{fund}$ is a fundamental domain for the action of $W_\mathrm{aff}$ on $X_\R$ and we call a dominant weight $\lambda\in X^+$ \emph{$p$-regular} if $\lambda \in x\Cdot C_\mathrm{fund}$ for some $x\in W_\mathrm{aff}$. By the linkage principle (see Section II.6 in \cite{Jantzen}), the category $\Rep(G)$ of finite dimensional $G$-modules decomposes as
\[ \Rep(G) = \bigoplus_{\lambda\in \overline{C}_\mathrm{fund}\cap X} \Rep_\lambda(G) , \]
where $\Rep_\lambda(G)$ denotes the \emph{linkage class} of $\lambda\in \overline{C}_\mathrm{fund}\cap X$, the full subcategory of $\Rep(G)$ consisting of the $G$-modules $M$ such that all composition factors of $M$ are of the form $L(w\Cdot\lambda)$ for some $w\in W_\mathrm{aff}$.

Now let us write $R(W_\mathrm{aff})=\{ s_{\alpha,m} \mid \alpha\in\Phi^+ , m\in\Z \}$ for the set of reflections in $W_\mathrm{aff}$ and $R(W_\mathrm{fin})$ for the set of reflections in $W_\mathrm{fin}$. For $y\in W_\mathrm{aff}$, we will be interested in sets of the form
\[ R_L(y) \coloneqq \{ s \in R(W_\mathrm{aff}) \mid sy < y \} , \]
where $<$ denotes the Bruhat order.
By Corollaries 1.4.4 and 1.4.5 in \cite{BjoernerBrentiCoxeter}, we have $\abs{R_L(y)}=\ell(y)$ and if $y=s_1 \cdots s_\ell$ is a reduced expression then
\begin{equation} \label{eq:reflectionsreducedexpression}
R_L(y) = \{ s_1 s_2 \cdots s_i \cdots s_2 s_1 \mid 1 \leq i \leq \ell \} .
\end{equation}
Furthermore, the length $\ell(y)$ equals the number of reflection hyperplanes separating the alcoves $C_\mathrm{fund}$ and $y\Cdot C_\mathrm{fund}$ by Theorem 4.5 in \cite{HumphreysCoxeter}. In fact, J. Humphreys shows that the set of reflection hyperplanes separating the alcoves $C_\mathrm{fund}$ and $y\Cdot C_\mathrm{fund}$ equals $\{ s_1 s_2 \cdots s_{i-1} \Cdot H_{s_i} \mid 1 \leq i \leq \ell \}$. Combining these results, we obtain the following alternative description of $R_L(y)$:

\begin{Lemma} \label{lem:hyperplaneseparates}
	For $y\in W_\mathrm{aff}$, we have
	\[ R_L(y) = \{ s \in R(W_\mathrm{aff}) \mid H_s \text{ separates } C_\mathrm{fund} \text{ and } y \Cdot C_\mathrm{fund} \} . \]
\end{Lemma}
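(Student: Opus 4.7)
The plan is to derive the equality directly from the two facts assembled just before the lemma, namely the description (\ref{eq:reflectionsreducedexpression}) of $R_L(y)$ in terms of a reduced expression of $y$, and Humphreys' identification of the separating hyperplanes. The strategy is to show that both sets have the same image under the map $s \mapsto H_s$, and then conclude by injectivity of this map on the set of affine reflections (since $s_{\alpha,m}$ is recovered from $H_{\alpha,m}^p$).

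First I would fix a reduced expression $y = s_1 \cdots s_\ell$. By (\ref{eq:reflectionsreducedexpression}), the elements of $R_L(y)$ are the reflections $t_i \coloneqq (s_1 \cdots s_{i-1}) \, s_i \, (s_1 \cdots s_{i-1})^{-1}$ for $1 \leq i \leq \ell$. The key auxiliary observation is that for any reflection $s \in R(W_\mathrm{aff})$ and any $w \in W_\mathrm{aff}$, the fixed hyperplane of the conjugate reflection $wsw^{-1}$ (under the $p$-dilated dot action) is $w \Cdot H_s$. This follows from associativity of the dot action: if $s \Cdot x = x$, then $(wsw^{-1}) \Cdot (w \Cdot x) = w \Cdot x$, so $w \Cdot H_s \subseteq H_{wsw^{-1}}$, and both are affine hyperplanes of the same codimension.

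Applying this with $w = s_1 \cdots s_{i-1}$ and $s = s_i$ yields $H_{t_i} = (s_1 \cdots s_{i-1}) \Cdot H_{s_i}$. By the cited result of Humphreys, the collection $\{(s_1 \cdots s_{i-1}) \Cdot H_{s_i} \mid 1 \leq i \leq \ell\}$ is exactly the set of reflection hyperplanes separating $C_\mathrm{fund}$ and $y \Cdot C_\mathrm{fund}$. Thus the images of $R_L(y)$ and of the right-hand side of the lemma under $s \mapsto H_s$ coincide, and injectivity of this map forces the two sets to be equal.

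I do not foresee a genuine obstacle here; once the conjugation formula for hyperplanes is in hand, the lemma is simply a matching of two parameterizations both indexed by positions in a reduced expression.
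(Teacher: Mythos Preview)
Your proposal is correct and follows essentially the same approach as the paper: both fix a reduced expression, identify the fixed hyperplane of the reflection $s_1 \cdots s_{i-1} s_i s_{i-1} \cdots s_1$ as $(s_1 \cdots s_{i-1}) \Cdot H_{s_i}$, and then invoke Humphreys' description of the separating hyperplanes. You have simply made explicit the conjugation formula $H_{wsw^{-1}} = w \Cdot H_s$ and the injectivity of $s \mapsto H_s$, both of which the paper leaves implicit.
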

\begin{proof}
	Fix a reduced expression $y=s_1 \cdots s_\ell$. For $1 \leq i \leq \ell$, the hyperplane of fixed points of the reflection $s_1 s_2 \cdots s_i \cdots s_2 s_1$ is $s_1 s_2 \cdots s_{i-1}\Cdot H_{s_i}$ and the claim follows from the above discussion.
\end{proof}

In order to prove the recursion formula and the duality formula, we will need three more lemmas about reflections in $W_\mathrm{aff}$.

\begin{Lemma} \label{lem:Rset}
	Let $x\in W_\mathrm{aff}$ and $s\in S$ such that $x<xs$. Then
	$R_L(xs)=R_L(x) \sqcup \{ xsx^{-1} \} $.
\end{Lemma}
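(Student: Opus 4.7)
The plan is to use the explicit description of $R_L(y)$ in terms of a reduced expression that was recalled just before the lemma (equation (3.1)). Since $\ell(xs) = \ell(x)+1$, any reduced expression $x = s_1 \cdots s_\ell$ for $x$ yields a reduced expression $xs = s_1 \cdots s_\ell \, s$ for $xs$. Applying equation (3.1) to both reduced expressions, I obtain
\[ R_L(x) = \{ s_1 \cdots s_i \cdots s_1 \mid 1 \leq i \leq \ell \} \qquad \text{and} \qquad R_L(xs) = R_L(x) \cup \{ s_1 \cdots s_\ell \, s \, s_\ell \cdots s_1 \} . \]
Since $x = s_1 \cdots s_\ell$ and $x^{-1} = s_\ell \cdots s_1$, the extra reflection is precisely $xsx^{-1}$, so that $R_L(xs) = R_L(x) \cup \{xsx^{-1}\}$.

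It then remains to argue that this union is disjoint. The cleanest way is a cardinality count: by Corollary 1.4.4 of \cite{BjoernerBrentiCoxeter} (cited in the paragraph before the lemma), $\abs{R_L(y)} = \ell(y)$ for any $y \in W_\mathrm{aff}$, and since $\ell(xs) = \ell(x)+1$, the extra element $xsx^{-1}$ cannot already lie in $R_L(x)$. Alternatively, one can verify directly that $(xsx^{-1})\cdot x = xs > x$, so $xsx^{-1} \notin R_L(x)$ by definition of the left descent-type set.

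There is no real obstacle here; the entire content is already encoded in equation (3.1). The only thing one needs to be slightly careful about is that equation (3.1) is stated for \emph{some} reduced expression, so I must check that the description of $R_L(x)$ it produces does not depend on the choice of reduced expression — but this is automatic since the left-hand side $R_L(x)$ is defined intrinsically via the Bruhat order.
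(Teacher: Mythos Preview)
Your proof is correct and follows exactly the same approach as the paper: take a reduced expression for $x$, extend it by $s$ to a reduced expression for $xs$, and apply equation \eqref{eq:reflectionsreducedexpression}. The paper's proof is terser and leaves the disjointness implicit (relying on the already-stated fact that $\abs{R_L(y)}=\ell(y)$), whereas you spell this out, but the argument is the same.
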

\begin{proof}
	Let $x=s_1 \cdots s_\ell$ be a reduced expression. The assumption implies that $xs=s_1 \cdots s_\ell s$ is also a reduced expression and the claim follows from \eqref{eq:reflectionsreducedexpression} as $xsx^{-1} = s_1 s_2 \cdots s_\ell s s_\ell \cdots s_2 s_1$.
\end{proof}

For the following lemma, recall that $W_\mathrm{aff}^+$ denotes the set of elements $x\in W_\mathrm{aff}$ whose length is minimal in the coset $W_\mathrm{fin}x$.

\begin{Lemma} \label{lem:AtMostOneReflectionWfinCoset}
	For $x,y\in W_\mathrm{aff}^+$, there exists at most one reflection $s \in R_L(y)$ such that $sy \in W_\mathrm{fin} x$.
\end{Lemma}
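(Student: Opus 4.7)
The plan is to rule out the existence of two distinct reflections $s_1, s_2 \in R_L(y)$ both satisfying $s_i y \in W_\mathrm{fin} x$. Picking $u, v \in W_\mathrm{fin}$ with $s_1 y = u x$ and $s_2 y = v x$ and using that reflections are involutions, I immediately get $s_1 s_2 = u v^{-1} \in W_\mathrm{fin}$. This reduces matters to a purely group-theoretic question: when does the product of two affine reflections lie in $W_\mathrm{fin}$?

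I would answer this via the semidirect product structure $W_\mathrm{aff} = \Z\Phi \rtimes W_\mathrm{fin}$. Writing $s_i = t_{m_i \alpha_i} s_{\alpha_i}$ with $\alpha_i \in \Phi^+$ and $m_i \in \Z$, a direct computation in the semidirect product gives
\[ s_1 s_2 = t_{m_1 \alpha_1 + m_2 s_{\alpha_1}(\alpha_2)}\, s_{\alpha_1} s_{\alpha_2} , \]
so $s_1 s_2 \in W_\mathrm{fin}$ is equivalent to the vanishing of the translation part $m_1 \alpha_1 + m_2 s_{\alpha_1}(\alpha_2)$. If $\alpha_1 = \alpha_2$, then $s_{\alpha_1}(\alpha_2) = -\alpha_1$, which forces $m_1 = m_2$ and hence $s_1 = s_2$, as desired. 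Otherwise $s_{\alpha_1}(\alpha_2)$ cannot be a scalar multiple of $\alpha_1$, since that would force $\alpha_2 \in \Phi \cap \R\alpha_1 = \{\pm\alpha_1\}$, contradicting $\alpha_1 \neq \alpha_2$ and $\alpha_1,\alpha_2 \in \Phi^+$. Linear independence then yields $m_1 = m_2 = 0$, so both $s_i = s_{\alpha_i}$ are finite reflections.

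To eliminate this remaining case I would invoke the minimal-length property of $y \in W_\mathrm{aff}^+$. With $s_1, s_2 \in W_\mathrm{fin}$, the assumption $s_i y \in W_\mathrm{fin} x$ becomes $W_\mathrm{fin} y = W_\mathrm{fin} x$, so $x = y$ by the uniqueness of minimal-length coset representatives. But $y \in W_\mathrm{aff}^+$ means $\ell(w y) \geq \ell(y)$ for all $w \in W_\mathrm{fin}$, and applying this with $w = s_i$ contradicts $s_i \in R_L(y)$, which (for a reflection) amounts to $\ell(s_i y) < \ell(y)$. The main thing to handle carefully is this last contradiction, where the hypothesis $y \in W_\mathrm{aff}^+$ does the real work; the rest is essentially bookkeeping in the semidirect product.
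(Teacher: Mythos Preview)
Your argument is correct and follows essentially the same strategy as the paper: deduce $s_1 s_2 \in W_\mathrm{fin}$, then analyze what this forces on the two affine reflections, using the minimal-length property of $y$ to exclude finite reflections. The only difference is in the mechanics of that analysis. You compute the translation part of $s_1 s_2$ in the semidirect product and do a case split on whether $\alpha_1=\alpha_2$, ruling out the case $m_1=m_2=0$ at the end. The paper instead first observes (from $y\in W_\mathrm{aff}^+$ and $s y<y$) that $s\notin W_\mathrm{fin}$, so $m\neq 0$, and then uses that $W_\mathrm{fin}$ fixes $-\rho$ under the dot action: from $ss'\in W_\mathrm{fin}$ one gets $s\Cdot(-\rho)=s'\Cdot(-\rho)$, i.e.\ $m\beta=m'\beta'$, and $m\neq 0$ immediately forces $\beta=\beta'$, $m=m'$. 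The fixed-point trick avoids your case split and the linear-independence argument, but your route is equally valid and perhaps more transparent about where each hypothesis enters.
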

\begin{proof}
	Let $s,s^\prime \in R_L(y)$ such that $sy \in W_\mathrm{fin}x$ and $s^\prime y \in W_\mathrm{fin} x$. Then
	\[ ss^\prime = (syx^{-1}) (s^\prime y x^{-1})^{-1} \in W_\mathrm{fin} , \]
	but $s \notin W_\mathrm{fin}$ because $s y < y$ and $y$ has minimal length in the coset $W_\mathrm{fin} y$. Hence we can write $s=s_{\beta,m}$ for some $m\neq 0$ and $\beta\in\Phi^+$ and $s^\prime = s_{\beta^\prime,m^\prime}$ for some $m^\prime \in\Z$ and $\beta^\prime \in\Phi^+$. As $-\rho$ is fixed by all elements of $W_\mathrm{fin}$ and as $ss^\prime \in W_\mathrm{fin}$, we have
	\[ p \cdot m\beta - \rho = s\Cdot(-\rho) = s^\prime\Cdot(-\rho) = p \cdot m^\prime \beta^\prime - \rho \]
	and therefore $m\beta = m^\prime \beta^\prime$. As $m\neq 0$, this implies that $\beta = \beta^\prime$ and $m=m^\prime$, so $s=s^\prime$.
\end{proof}

\begin{Lemma} \label{lem:reflectionWfinconjugate}
	Let $x\in W_\mathrm{aff}$ and $w\in W_\mathrm{fin}$. Then
	\[ w \big( R_L(x) \setminus R(W_\mathrm{fin}) \big) w^{-1} = w R_L(x) w^{-1} \setminus R(W_\mathrm{fin}) = R_L(wx) \setminus R( W_\mathrm{fin} ) . \]
\end{Lemma}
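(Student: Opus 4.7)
The plan is to reduce both equalities to geometric statements about the alcove arrangement via Lemma~\ref{lem:hyperplaneseparates}.

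The first equality is formal: since $W_\mathrm{fin}$ is a subgroup of $W_\mathrm{aff}$, conjugation by $w\in W_\mathrm{fin}$ sends $R(W_\mathrm{fin})$ onto itself, so $wR(W_\mathrm{fin})w^{-1}=R(W_\mathrm{fin})$ and the set difference commutes with conjugation. The only thing to record is the basic correspondence $wsw^{-1}\in R(W_\mathrm{fin})\iff s\in R(W_\mathrm{fin})$.

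For the second equality, I would rephrase everything in terms of hyperplanes using Lemma~\ref{lem:hyperplaneseparates}. A key point is that a reflection $s=s_{\alpha,m}$ lies in $R(W_\mathrm{fin})$ iff $m=0$, i.e.\ iff $-\rho\in H_s$. Next I would use the symmetric-difference property of separating hyperplanes: for three alcoves $A_1,A_2,A_3$, a hyperplane $H$ separates $A_1$ from $A_3$ iff it separates exactly one of the pairs $(A_1,A_2)$ and $(A_2,A_3)$. Applying this to the triple $C_\mathrm{fund}$, $w\Cdot C_\mathrm{fund}$, $wx\Cdot C_\mathrm{fund}$, the hyperplanes corresponding to $R_L(wx)$ form the symmetric difference of the hyperplanes corresponding to $R_L(w)$ and those separating $w\Cdot C_\mathrm{fund}$ from $wx\Cdot C_\mathrm{fund}$. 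The second of these families is obtained by applying $w\Cdot$ to the hyperplanes corresponding to $R_L(x)$, and since $w\Cdot H_s=H_{wsw^{-1}}$ it corresponds bijectively to the reflection set $wR_L(x)w^{-1}$.

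The crucial simplification is that $R_L(w)\subseteq R(W_\mathrm{fin})$: since $W_\mathrm{fin}$ is the standard parabolic of $W_\mathrm{aff}$ generated by $S\setminus\{s_{\alpha_\mathrm{h},1}\}$, a reduced expression for $w$ in $W_\mathrm{fin}$ is also reduced in $W_\mathrm{aff}$, and by \eqref{eq:reflectionsreducedexpression} every element of $R_L(w)$ is a $W_\mathrm{fin}$-conjugate of a simple reflection from $W_\mathrm{fin}$, hence lies in $R(W_\mathrm{fin})$. Consequently, once I intersect both sides of the symmetric difference with $R(W_\mathrm{aff})\setminus R(W_\mathrm{fin})$, the $R_L(w)$-contribution is killed and what remains is exactly $wR_L(x)w^{-1}\setminus R(W_\mathrm{fin})$, yielding the second equality.

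The main obstacle I anticipate is bookkeeping rather than content: I need the symmetric-difference identity at the hyperplane level to translate cleanly to reflections via $H_{wsw^{-1}}=w\Cdot H_s$, and I must verify that the removal of $R(W_\mathrm{fin})$ really eliminates the $R_L(w)$-term even if $R_L(w)$ and $wR_L(x)w^{-1}$ overlap in $R(W_\mathrm{fin})$; this reduces to the clean fact that $R_L(w)$ is entirely contained in $R(W_\mathrm{fin})$, as just argued.
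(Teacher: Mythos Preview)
Your proposal is correct and rests on the same geometric input as the paper's proof: both reduce the second equality to Lemma~\ref{lem:hyperplaneseparates} together with the fact that every hyperplane separating $C_\mathrm{fund}$ from $w\Cdot C_\mathrm{fund}$ corresponds to a reflection in $R(W_\mathrm{fin})$. The packaging differs only mildly. The paper proves the two inclusions separately, deducing the second from the first via the substitution $(w,x)\mapsto(w^{-1},wx)$, whereas you handle both at once through the symmetric-difference identity $R_L(wx)=R_L(w)\,\triangle\,wR_L(x)w^{-1}$; and the paper justifies $R_L(w)\subseteq R(W_\mathrm{fin})$ geometrically (any hyperplane separating $C_\mathrm{fund}$ and $w\Cdot C_\mathrm{fund}$ must contain $-\rho$, which lies in the closure of both alcoves) rather than via reduced expressions in the parabolic subgroup $W_\mathrm{fin}$.
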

\begin{proof}
	The first equality is clear since $R(W_\mathrm{fin})$ is stable under conjugation by elements of $W_\mathrm{fin}$. Now suppose that $s\in R_L(x)$ such that $wsw^{-1} \notin R(W_\mathrm{fin})$. By Lemma \ref{lem:hyperplaneseparates}, the hyperplane $H_s$ separates the alcoves $C_\mathrm{fund}$ and $x \Cdot C_\mathrm{fund}$  and it follows that $w\Cdot H_s = H_{wsw^{-1}}$ separates $w\Cdot C_\mathrm{fund}$ and $wx \Cdot C_\mathrm{fund}$. However, $H_{wsw^{-1}}$ does not separate $C_\mathrm{fund}$ and $w \Cdot C_\mathrm{fund}$. Indeed, the point $-\rho$ is contained in the intersection of the closures of the alcoves $C_\mathrm{fund}$ and $w\Cdot C_\mathrm{fund}$, so any hyperplane separating these alcoves must contain $-\rho$ and therefore correspond to a reflection in $W_\mathrm{fin}$. Hence $C_\mathrm{fund}$ and $w\Cdot C_\mathrm{fund}$ belong to the same connected component of $X_\R \setminus H_{wsw^{-1}}$ and $wx\Cdot C_\mathrm{fund}$ belongs to the other one. Using Lemma \ref{lem:hyperplaneseparates} again, we conclude that $wsw^{-1} \in R_L(wx) \setminus R(W_\mathrm{fin})$, so
	\[ w R_L(x) w^{-1} \setminus R(W_\mathrm{fin}) \subseteq R_L(wx) \setminus R( W_\mathrm{fin} ) . \]
	Conversely, if $s\in R_L(wx) \setminus R(W_\mathrm{fin})$ then
	\[ w^{-1}sw \in w^{-1} \big( R_L(wx) \setminus R(W_\mathrm{fin}) \big) w = w^{-1} R_L(wx) w \setminus R(W_\mathrm{fin}) \subseteq R_L(x) \setminus R(W_\mathrm{fin}) \]
	and it follows that $s \in w \big( R_L(x) \setminus R(W_\mathrm{fin}) \big) w^{-1}$.
	We conclude that
	\[ R_L(wx) \setminus R(W_\mathrm{fin}) \subseteq w \big( R_L(x) \setminus R(W_\mathrm{fin}) \big) w^{-1} = w R_L(x) w^{-1} \setminus R(W_\mathrm{fin}) , \]
	as required.
\end{proof}

Now let $h=\langle \rho , \alpha_\mathrm{h}^\vee \rangle + 1$ be the \emph{Coxeter number} of $G$.
We conclude this section with a lemma about $W_\mathrm{aff}^+$ that is well-known to experts.

\begin{Lemma}
	Suppose that $p \geq h$. Then
	\[ W_\mathrm{aff}^+=\{ x \in W_\mathrm{aff} \mid x \Cdot C_\mathrm{fund} \cap X^+ \neq \varnothing \} . \]
\end{Lemma}
\begin{proof}
	First note that $\alpha_\mathrm{h}^\vee$ is the highest root in the dual root system $\Phi^\vee$, hence $\langle \rho , \beta^\vee \rangle \leq \langle \rho , \alpha_\mathrm{h}^\vee \rangle <p$ for all $\beta\in \Phi^+$. This implies that $0 \in C_\mathrm{fund}$ and that $x\Cdot C_\mathrm{fund} \cap X \neq \varnothing$ for all $x\in W_\mathrm{aff}$.
	
	Now let $X_\R^+ = \{ \lambda \in X_\R \mid \langle \lambda , \beta^\vee \rangle > 0 \text{ for all } \beta\in\Phi^+ \}$ be the dominant Weyl chamber in $X_\R$. As the finite Weyl group $W_\mathrm{fin}$ acts simply transitively on the set of Weyl chambers, every coset $W_\mathrm{fin} x$ contains a unique element $y$ with $y\Cdot C_\mathrm{fund} \subseteq X_\R^+ - \rho$ and we claim that $y$ has minimal length among the elements of $W_\mathrm{fin} x$. Indeed, it is straightforward to see that an alcove $C$ satisfies $C \subseteq X_\R^+-\rho$ if and only if none of the reflection hyperplanes $H_{\beta,0}^p$ with $\beta\in\Phi^+$ separate $C_\mathrm{fund}$ and $C$, so Lemma \ref{lem:Rset} implies that $y$ is the unique element of $W_\mathrm{fin} x$ with $R_L(y) \cap R(W_\mathrm{fin})=\varnothing$. As $\ell(y) = \abs{R_L(y)}$, the claim follows from Lemma \ref{lem:reflectionWfinconjugate}.
	
	For $y\in W_\mathrm{aff}$ with $y\Cdot C_\mathrm{fund} \subseteq X_\R^+ - \rho$ and for $\lambda \in y \Cdot C_\mathrm{fund} \cap X$, we have $0 < \langle \lambda+\rho , \alpha^\vee \rangle = \langle \lambda , \alpha^\vee \rangle+1$ for all $\alpha\in\Delta$ and it follows that $\lambda \in X^+$. Conversely, if $y\Cdot C_\mathrm{fund}$ contains a dominant weight then $y\Cdot C_\mathrm{fund} \cap (X_\R^+-\rho) \neq \varnothing$ and therefore $y\Cdot C_\mathrm{fund} \subseteq X_\R -\rho$ by connectedness of $y\Cdot C_\mathrm{fund}$. We conclude that an alcove $x\Cdot C_\mathrm{fund}$ contains a dominant weight if and only if $x$ has minimal length among the elements of $W_\mathrm{fin} x$, as required.
\end{proof}

\section{Rewriting the sum formulas} \label{sec:rewriting}

Suppose that $p\geq h$, the Coxeter number of $G$. In this section, we rewrite the sum formulas of Jantzen and Andersen in a character- and dimension-free way by interpreting them as objects of the anti-spherical $W_\mathrm{aff}$-module and its dual, respectively.

Let $\lambda\in X^+$ be $p$-regular and let $x\in W_\mathrm{aff}^+$ and $\lambda_0 \in C_\mathrm{fund}$ such that $\lambda = x\Cdot \lambda_0$. Recall that
\[ \JSF_\lambda = - \sum_{\alpha\in\Phi^+} ~ \sum_{0 < m < \langle \lambda + \rho , \alpha^\vee \rangle} \nu_p(m) \cdot \chi_{\lambda-m\alpha} = - \sum_{\alpha\in\Phi^+} ~ \sum_{0 < mp < \langle \lambda + \rho , \alpha^\vee \rangle} \nu_p(mp) \cdot \chi_{\lambda-mp\alpha}  \]
and note that for $\alpha\in\Phi^+$ and $m\in\Z$, the condition that
$ 0 < mp < \langle \lambda + \rho , \alpha^\vee \rangle $
is satisfied if and only if the reflection hyperplane $H_{\alpha,m}^p$ separates the alcoves $C_\mathrm{fund}$ and $x\Cdot C_\mathrm{fund}$. Furthermore, we have
\[ \lambda-mp\alpha = t_{-m\alpha} \Cdot \lambda = s_\alpha t_{m\alpha} s_\alpha \Cdot \lambda = s_\alpha \Cdot ( s_{\alpha,m} \Cdot \lambda ) \]
and so $\chi_{\lambda-mp\alpha}= - \chi_{ s_{\alpha,m} \Cdot \lambda }$.
Combining these observations with Lemma \ref{lem:hyperplaneseparates}, we obtain
\begin{equation} \label{eq:JSFRset}
\JSF_{x\Cdot\lambda_0} = \sum_{s\in R_L(x)} \nu_p\big( m(s) \cdot p \big) \cdot \chi_{sx\Cdot\lambda_0} ,
\end{equation}
where for $s=s_{\alpha,m}$, we set $m(s)\coloneqq m$.

Next, we wish to rewrite the formula \eqref{eq:JSFRset} in terms of the Coxeter combinatorics associated with the affine Weyl group.
Let $M_\mathrm{asph}\coloneqq \sign \otimes_{\Z[W_\mathrm{fin}]} \Z[W_\mathrm{aff}]$ be the \emph{anti-spherical $W_\mathrm{aff}$-module}, where $\sign$ denotes the sign representation of $W_\mathrm{fin}$. For $x\in W_\mathrm{aff}$, we write $N_x \coloneqq 1\otimes x \in M_\mathrm{asph}$, so that the elements $N_y$ with $y\in W_\mathrm{aff}^+$ form a $\Z$-basis of $M_\mathrm{asph}$. Note that for $w\in W_\mathrm{fin}$ and $x,y\in W_\mathrm{aff}$, we have
\[ N_{wx}=\det(w) \cdot N_x \qquad \text{and} \qquad N_{xy} = N_x \cdot y . \] 
Now fix $\lambda_0 \in C_\mathrm{fund} \cap X$ and recall that the characters $\chi_{x\Cdot\lambda_0}$ with $x\in W_\mathrm{aff}^+$ form a basis of the $\Z$-submodule $\Z[X]^{W_\mathrm{fin}}_{\lambda_0}$ of $\Z[X]^{W_\mathrm{fin}}$ spanned by the characters of all $G$-modules in $\Rep_{\lambda_0}(G)$. There is an isomorphism of $\Z$-modules
\[ \psi_{\lambda_0} \colon \Z[X]^{W_\mathrm{fin}}_{\lambda_0} \longrightarrow M_\mathrm{asph} \]
with $\chi_{y\Cdot\lambda_0} \mapsto N_y$ for all $y\in W_\mathrm{aff}^+$. (This can be upgraded to an isomorphism of $\Z[W_\mathrm{aff}]$-modules by identifying $\Z[X]^{W_\mathrm{fin}}_{\lambda_0}$ with the Grothendieck group of $\Rep_{\lambda_0}(G)$ and letting $s+1$ act via a wall-crossing functor $\Theta_s$ for all $s\in S$.) For $x\in W_\mathrm{aff}^+$, we define
\[ \JSF_x \coloneqq \psi_{\lambda_0}\big( \JSF_{x\Cdot \lambda_0} \big) . \]

\begin{Lemma} \label{lem:chitomx}
	For all $x\in W_\mathrm{aff}$, we have $\psi_{\lambda_0}\big( \chi_{x\Cdot \lambda_0} \big)=N_x$.
\end{Lemma}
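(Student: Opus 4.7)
The plan is to reduce to the already-established case $x \in W_\mathrm{aff}^+$ by tracking how both sides of the identity transform under left multiplication by an element of $W_\mathrm{fin}$. I would start by writing $x = wy$ uniquely with $w \in W_\mathrm{fin}$ and $y \in W_\mathrm{aff}^+$, using that $W_\mathrm{aff}^+$ is a complete set of minimal-length coset representatives for $W_\mathrm{fin}\backslash W_\mathrm{aff}$.

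The right-hand side is then immediate: by the identity $N_{wy} = \det(w) \cdot N_y$ recorded just before the statement, $N_x = \det(w) \cdot N_y$. For the left-hand side, I would first verify that $y \Cdot \lambda_0 \in X^+$. Since $y$ has minimal length in $W_\mathrm{fin} y$, $sy > y$ for every simple reflection $s = s_\alpha$ with $\alpha$ simple, so by Lemma \ref{lem:hyperplaneseparates} no hyperplane $H_{\alpha,0}^p$ (with $\alpha$ simple) separates $C_\mathrm{fund}$ from $y \Cdot C_\mathrm{fund}$. Consequently $\langle y \Cdot \lambda_0 + \rho, \alpha^\vee \rangle > 0$ for every simple $\alpha$, and integrality of $y \Cdot \lambda_0$ upgrades this to $\langle y \Cdot \lambda_0, \alpha^\vee \rangle \geq 0$, i.e., $y \Cdot \lambda_0 \in X^+$.

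With dominance in hand, the convention for $\chi_\mu$ at arbitrary weights recalled in the Notation section applies to $x \Cdot \lambda_0 = w \Cdot (y \Cdot \lambda_0)$, giving $\chi_{x \Cdot \lambda_0} = \det(w) \cdot \chi_{y \Cdot \lambda_0}$. Applying the $\Z$-linear map $\psi_{\lambda_0}$ and using $\psi_{\lambda_0}(\chi_{y \Cdot \lambda_0}) = N_y$ by definition then yields $\psi_{\lambda_0}(\chi_{x \Cdot \lambda_0}) = \det(w) \cdot N_y = N_x$. I do not expect any substantive obstacle here: the only point that requires a moment's care is the dominance check for $y \Cdot \lambda_0$, and the rest of the argument is essentially a bookkeeping exercise matching the $\det(w)$ twists on the two sides.
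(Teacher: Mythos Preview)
Your proof is correct and follows essentially the same approach as the paper: write $x=wy$ with $w\in W_\mathrm{fin}$ and $y\in W_\mathrm{aff}^+$, use $\chi_{x\Cdot\lambda_0}=\det(w)\cdot\chi_{y\Cdot\lambda_0}$ and $N_{wy}=\det(w)\cdot N_y$, and apply the definition of $\psi_{\lambda_0}$. The only difference is that you spell out the dominance check $y\Cdot\lambda_0\in X^+$, which the paper leaves implicit (it follows immediately from the alternative description of $W_\mathrm{aff}^+$ under the standing assumption $p\geq h$).
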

\begin{proof}
	For $x\in W_\mathrm{aff}^+$, this is just the definition of $\psi_{\lambda_0}$. For an arbitrary $x\in W_\mathrm{aff}$, we can write $x=wy$ with $w\in W_\mathrm{fin}$ and $y\in W_\mathrm{aff}^+$ and it follows that
	\[ \psi_{\lambda_0}\big( \chi_{x\Cdot \lambda_0} \big) = \psi_{\lambda_0}\big( \det(w) \cdot \chi_{y\Cdot \lambda_0} \big) = \det(w) \cdot N_y = N_x , \]
	as claimed.
\end{proof}

\begin{Corollary} \label{cor:JSFRsetasph}
	For $x\in W_\mathrm{aff}^+$, we have
	\[ \JSF_x = \sum_{ \substack{ s \in R(W_\mathrm{aff}) \\ sx<x } } \nu_p\big( m(s) \cdot p \big) \cdot N_{sx} . \]
\end{Corollary}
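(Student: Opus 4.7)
The plan is essentially to transport equation \eqref{eq:JSFRset} across the isomorphism $\psi_{\lambda_0}$. By definition, $\JSF_x = \psi_{\lambda_0}(\JSF_{x\Cdot\lambda_0})$, and equation \eqref{eq:JSFRset} already expresses $\JSF_{x\Cdot\lambda_0}$ as a $\Z$-linear combination of characters of the form $\chi_{sx\Cdot\lambda_0}$ indexed by $s\in R_L(x)$, with coefficients $\nu_p(m(s)\cdot p)$.

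First I would recall the definition $R_L(x)=\{s\in R(W_\mathrm{aff}) \mid sx<x\}$, which immediately matches the indexing set appearing on the right-hand side of the claimed formula. Next I would apply $\psi_{\lambda_0}$ term-by-term to \eqref{eq:JSFRset}, using $\Z$-linearity. The key input is Lemma \ref{lem:chitomx}, which gives $\psi_{\lambda_0}(\chi_{sx\Cdot\lambda_0}) = N_{sx}$ for every $s\in R(W_\mathrm{aff})$, without any dominance assumption on $sx$. Combining these, we obtain
\[ \JSF_x = \psi_{\lambda_0}(\JSF_{x\Cdot\lambda_0}) = \sum_{s\in R_L(x)} \nu_p\bigl(m(s)\cdot p\bigr)\cdot N_{sx}, \]
which is the claimed identity.

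The only point that requires a brief comment is well-definedness: since $\lambda_0\in C_\mathrm{fund}\cap X$ is regular (the fundamental alcove is open), each weight $sx\Cdot\lambda_0$ lies in the open alcove $sx\Cdot C_\mathrm{fund}$ and is therefore $p$-regular, so the symbol $\chi_{sx\Cdot\lambda_0}$ is defined via the recipe recalled in the Notation section. Thus there is no real obstacle; the corollary is a formal consequence of \eqref{eq:JSFRset}, the definition of $\JSF_x$, and Lemma \ref{lem:chitomx}.
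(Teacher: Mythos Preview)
Your proof is correct and follows exactly the same approach as the paper: apply $\psi_{\lambda_0}$ to equation \eqref{eq:JSFRset} and use Lemma \ref{lem:chitomx} to identify $\psi_{\lambda_0}(\chi_{sx\Cdot\lambda_0})$ with $N_{sx}$. Your additional remark about $p$-regularity of $sx\Cdot\lambda_0$ is fine but not strictly needed, since Lemma \ref{lem:chitomx} and the definition of $\chi_\nu$ for arbitrary $\nu\in X$ already handle all cases.
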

\begin{proof}
	This is immediate from equation \eqref{eq:JSFRset} and Lemma \ref{lem:chitomx}.
\end{proof}

\begin{Remark}
	Fix a reduced expression $x=s_1 \cdots s_\ell$ for $x\in W_\mathrm{aff}^+$ and set
	\[ t_i = s_1 s_2 \cdots s_i \cdots s_2 s_1 \qquad \text{and} \qquad x_i = t_i x = s_1 \cdots s_{i-1} s_{i+1} \cdots s_\ell \]
	for $1 \leq i \leq \ell$. Then $R_L(x)= \{ t_i \mid 1 \leq i \leq \ell \}$ by \eqref{eq:reflectionsreducedexpression} and
	\[ \JSF_x = \sum_{i=1}^\ell \nu_p\big( m(t_i) \cdot p \big) \cdot N_{x_i} \]
	by Corollary \ref{cor:JSFRsetasph}.
\end{Remark}

In order to rewrite the Andersen sum formula, we work in the dual $M_\mathrm{asph}^*=\Hom_\Z(M_\mathrm{asph},\Z)$ of the anti-spherical module $M_\mathrm{asph}$. We write
\[ \langle - \, , - \rangle \colon M_\mathrm{asph}^* \times M_\mathrm{asph} \longrightarrow \Z \]
for the natural evaluation pairing. For $x\in W_\mathrm{aff}^+$, we define an element $N_x^*\in M_\mathrm{asph}^*$ by $\langle N_x^* , N_y \rangle = \delta_{x,y}$ for all $y\in W_\mathrm{aff}^+$. Then every element $\vartheta\in M_\mathrm{asph}^*$ can be written as a formal infinite sum
$ \vartheta = \sum_x a_x \cdot N_x^* $
with $a_x\in\Z$ for all $x\in W_\mathrm{aff}^+$. Note that we do not require that all but finitely many of the $a_x$ are zero, the infinite sum is to be understood in the sense that $\langle \vartheta , N_x \rangle = a_x$ for all $x\in W_\mathrm{aff}^+$. For $y\in W_\mathrm{aff}$, there exist unique elements $w\in W_\mathrm{fin}$ and $x\in W_\mathrm{aff}^+$ with $y=wx$ and we set $N_y^* \coloneqq \det(w) \cdot N_x^*$.

Now let $\lambda\in X^+$ be $p$-regular and let $x\in W_\mathrm{aff}^+$ and $\lambda_0\in C_\mathrm{fund}$ such that $\lambda=x\Cdot\lambda_0$. Recall that
\begin{multline*}
\hspace{1.5cm} \ASF(\lambda,T) = - \sum_{\alpha\in\Phi^+} ~ \sum_{ m \notin I(\lambda,\alpha) } \nu_p(m) \cdot [ \ch T : \chi_{\lambda-m\alpha} ] \\ = - \sum_{\alpha\in\Phi^+} ~ \sum_{ mp \notin I(\lambda,\alpha) } \nu_p(mp) \cdot [ \ch T : \chi_{\lambda-mp\alpha} ] , \hspace{1.5cm}
\end{multline*}
where $I(\lambda,\alpha)=\{ m\in\Z \mid 0 \leq m \leq \langle \lambda+\rho , \alpha^\vee \rangle \}$. For $\alpha\in\Phi^+$ and $m\in\Z$, the condition that $mp\notin I(\lambda,\alpha)$ is satisfied if and only if $m\neq 0$ and the hyperplane $H_{\alpha,m}^p$ does not separate the alcoves $C_\mathrm{fund}$ and $x\Cdot C_\mathrm{fund}$. As before, we have
$ [ \ch T : \chi_{\lambda-mp\alpha} ] = - [ \ch T : \chi_{s_{\alpha,m} \Cdot \lambda} ] $
and using Lemma \ref{lem:hyperplaneseparates} we can rewrite the Andersen sum formula as
\begin{equation} \label{eq:ASFRset}
\ASF(x\Cdot\lambda_0,T) = \sum_{ \substack{ s \in R(W_\mathrm{aff}) \setminus R(W_\mathrm{fin}) \\ sx>x } } \nu_p\big( m(s) \cdot p \big) \cdot [ \ch T : \chi_{sx\Cdot \lambda_0} ] .
\end{equation}
Now the characters of the indecomposable tilting modules in $\Rep_{\lambda_0}(G)$ form a basis of $\Z[X]^{W_\mathrm{fin}}_{\lambda_0}$, so the Andersen sum formula defines a $\Z$-linear map
\[ \ASF_\lambda \colon \Z[X]^{W_\mathrm{fin}}_{\lambda_0} \longrightarrow \Z \]
with $\langle \ASF_\lambda , \ch T \rangle = \ASF(\lambda,T)$ for every tilting module $T$ in $\Rep_{\lambda_0}(G)$, where as before, we write
\[ \langle - \, , - \rangle \colon \big(\Z[X]^{W_\mathrm{fin}}_{\lambda_0}\big)^* \times \Z[X]^{W_\mathrm{fin}}_{\lambda_0} \longrightarrow \Z \]
for the evaluation pairing. The isomorphism $\psi_{\lambda_0}\colon \Z[X]^{W_\mathrm{fin}}_{\lambda_0} \to M_\mathrm{asph}$ induces an isomorphism
\[ \psi_{\lambda_0}^* \colon M_\mathrm{asph}^* \longrightarrow \big( \Z[X]^{W_\mathrm{fin}}_{\lambda_0} \big)^* \]
and we define $\ASF_x\in M_\mathrm{asph}^*$ by the equality
\[ \psi_{\lambda_0}^*\big( \ASF_x \big) = \ASF_{x\Cdot\lambda_0} . \]

\begin{Lemma} \label{lem:chitomxdual}
	For all $y\in W_\mathrm{aff}$, the $\Z$-linear map
	$ [ - : \chi_{y\Cdot\lambda_0} ] \colon \Z[X]^{W_\mathrm{fin}}_{\lambda_0} \to \Z $ with $ \chi \mapsto [\chi:\chi_{y\Cdot \lambda_0} ] $
	satisfies
	$ [ - : \chi_{y\Cdot\lambda_0} ] = \psi_{\lambda_0}^*(N_y^*) $.
\end{Lemma}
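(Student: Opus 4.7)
The plan is to verify the equality of two $\Z$-linear maps $\Z[X]^{W_\mathrm{fin}}_{\lambda_0}\to\Z$ by testing them on the basis $\{\chi_{x\Cdot\lambda_0} \mid x\in W_\mathrm{aff}^+\}$ of $\Z[X]^{W_\mathrm{fin}}_{\lambda_0}$.

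First I would treat the case $y\in W_\mathrm{aff}^+$. For $x\in W_\mathrm{aff}^+$, Lemma \ref{lem:chitomx} gives $\psi_{\lambda_0}(\chi_{x\Cdot\lambda_0})=N_x$, so evaluating $\psi_{\lambda_0}^*(N_y^*)$ on $\chi_{x\Cdot\lambda_0}$ produces
\[ \big\langle N_y^*, \psi_{\lambda_0}(\chi_{x\Cdot\lambda_0}) \big\rangle = \big\langle N_y^*, N_x \big\rangle = \delta_{x,y} . \]
On the other hand, since the $\chi_{z\Cdot\lambda_0}$ with $z\in W_\mathrm{aff}^+$ form a $\Z$-basis of $\Z[X]^{W_\mathrm{fin}}_{\lambda_0}$, one has $[\chi_{x\Cdot\lambda_0}:\chi_{y\Cdot\lambda_0}]=\delta_{x,y}$ as well. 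Thus both maps agree on the basis and therefore coincide.

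For general $y\in W_\mathrm{aff}$, I would write $y=wx$ with $w\in W_\mathrm{fin}$ and $x\in W_\mathrm{aff}^+$. The definition $N_y^*\coloneqq\det(w)\cdot N_x^*$ multiplies the right-hand side by $\det(w)$, while the convention for $\chi_\mu$ at non-dominant $\mu$ (recalled in the notation section) gives $\chi_{y\Cdot\lambda_0}=\det(w)\cdot\chi_{x\Cdot\lambda_0}$ and hence multiplies $[-:\chi_{y\Cdot\lambda_0}]$ by the same sign. The identity for $y$ therefore reduces to the already established identity for $x\in W_\mathrm{aff}^+$.

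The argument is essentially formal: the content of the lemma is that $\psi_{\lambda_0}^*$ carries the dual basis $\{N_y^*\mid y\in W_\mathrm{aff}^+\}$ to the basis of $\big(\Z[X]^{W_\mathrm{fin}}_{\lambda_0}\big)^*$ dual to $\{\chi_{y\Cdot\lambda_0}\mid y\in W_\mathrm{aff}^+\}$, which is immediate from Lemma \ref{lem:chitomx}. The only mild subtlety is the sign bookkeeping in the reduction to $W_\mathrm{aff}^+$, which matches on both sides by design, so I foresee no genuine obstacle.
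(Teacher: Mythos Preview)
Your proposal is correct and follows essentially the same approach as the paper: both arguments verify the equality on the basis $\{\chi_{z\Cdot\lambda_0}\mid z\in W_\mathrm{aff}^+\}$ and use the sign convention for $N_y^*$ and $\chi_{y\Cdot\lambda_0}$ to handle general $y=wx$. The only cosmetic difference is that the paper treats all $y$ in a single chain of equalities rather than first isolating the case $y\in W_\mathrm{aff}^+$.
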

\begin{proof}
	Let $w\in W_\mathrm{fin}$ and $x\in W_\mathrm{aff}^+$ such that $y=wx$. For $z\in W_\mathrm{aff}^+$, we have
	\begin{multline*}
	\qquad [ \chi_{z\Cdot \lambda_0} : \chi_{y\Cdot \lambda_0} ] = \det(w) \cdot \delta_{x,z} = \det(w) \cdot \langle N_x^* , N_z \rangle = \langle N_y^* , N_z \rangle \\
	= \big\langle N_y^* , \psi_{\lambda_0}\big( \chi_{z\Cdot \lambda_0} \big) \big\rangle = \big\langle \psi_{\lambda_0}^*( N_y^* ) , \chi_{z\Cdot \lambda_0} \big\rangle \qquad
	\end{multline*}
	and the claim follows.
\end{proof}

\begin{Corollary} \label{cor:ASFRsetasph}
	For $x\in W_\mathrm{aff}^+$, we have
	\[ \ASF_x = \sum_{ \substack{ s \in R(W_\mathrm{aff}) \setminus R(W_\mathrm{fin}) \\ sx>x } } \nu_p\big( m(s) \cdot p \big) \cdot N_{sx}^* . \]
\end{Corollary}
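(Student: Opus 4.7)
The plan is to translate the desired equality across the isomorphism $\psi_{\lambda_0}^* \colon M_\mathrm{asph}^* \to \bigl(\Z[X]^{W_\mathrm{fin}}_{\lambda_0}\bigr)^*$. Since $\psi_{\lambda_0}^*$ is an isomorphism, it suffices to prove the image of both sides under it agree. By definition, $\psi_{\lambda_0}^*(\ASF_x) = \ASF_{x\Cdot\lambda_0}$, and by Lemma \ref{lem:chitomxdual} applied to $y = sx$, we have $\psi_{\lambda_0}^*(N_{sx}^*) = [\,-\,:\chi_{sx\Cdot\lambda_0}]$. So the claim reduces to the identity of $\Z$-linear maps on $\Z[X]^{W_\mathrm{fin}}_{\lambda_0}$
\[ \ASF_{x\Cdot\lambda_0} = \sum_{ \substack{ s \in R(W_\mathrm{aff}) \setminus R(W_\mathrm{fin}) \\ sx>x } } \nu_p\bigl( m(s) \cdot p \bigr) \cdot [\,-\,:\chi_{sx\Cdot\lambda_0}] . \]

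To establish this, I would evaluate both sides on characters of tilting modules $T$ in $\Rep_{\lambda_0}(G)$, which form a $\Z$-basis of $\Z[X]^{W_\mathrm{fin}}_{\lambda_0}$ as recalled just before the definition of $\ASF_\lambda$. The left-hand side returns $\ASF(x\Cdot\lambda_0,T)$ by construction of $\ASF_{x\Cdot\lambda_0}$, while the right-hand side returns exactly the expression appearing on the right of the reformulated Andersen sum formula \eqref{eq:ASFRset}. The two agree by \eqref{eq:ASFRset}, and since they agree on a basis they agree as linear functionals, completing the reduction.

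The one subtlety to check is that the right-hand side of the desired formula actually defines an element of $M_\mathrm{asph}^*$ in the formal-infinite-sum sense introduced earlier in this section. For a fixed $y\in W_\mathrm{aff}^+$, I would use a mild variant of Lemma \ref{lem:AtMostOneReflectionWfinCoset} (with the roles of $x$ and $y$ interchanged, noting that the proof only uses that the reflections in question lie outside $R(W_\mathrm{fin})$, which is precisely the indexing condition here) to see that at most one reflection $s$ in the indexing set satisfies $sx \in W_\mathrm{fin}y$. Hence pairing the right-hand side with $N_y$ picks up at most one nonzero term and yields a well-defined integer. I do not expect any genuine obstacle beyond this combinatorial bookkeeping; once the formal sum is justified, the proof is the one-line translation through $\psi_{\lambda_0}^*$ sketched above.
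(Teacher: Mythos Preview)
Your proposal is correct and follows essentially the same route as the paper: apply $\psi_{\lambda_0}^*$ to both sides, use Lemma~\ref{lem:chitomxdual} for the right-hand side, and invoke equation~\eqref{eq:ASFRset} to identify the result with $\ASF_{x\Cdot\lambda_0}$. Your explicit evaluation on tilting characters and the well-definedness check for the formal infinite sum (via the variant of Lemma~\ref{lem:AtMostOneReflectionWfinCoset}) are extra details the paper leaves implicit, but they do not constitute a different approach.
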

\begin{proof}
	By Lemma \ref{lem:chitomxdual} and equation \eqref{eq:ASFRset}, we have
	\begin{multline*}
	\psi_{\lambda_0}^*\bigg( \sum_{ \substack{ s \in R(W_\mathrm{aff}) \setminus R(W_\mathrm{fin}) \\ sx>x } } \nu_p\big( m(s) \cdot p \big) \cdot N_{sx}^* \bigg) = \sum_{ \substack{ s \in R(W_\mathrm{aff}) \setminus R(W_\mathrm{fin}) \\ sx>x } } \nu_p\big( m(s) \cdot p \big) \cdot [ - : \chi_{sx\Cdot\lambda_0} ] \\ = \ASF_{x\Cdot\lambda_0} = \psi_{\lambda_0}^*\big( \ASF_x \big)
	\end{multline*}
	and the claim follows.
\end{proof}

\section{The recursion formula and the duality formula} \label{sec:formulas}

In this section, we prove the two formulas that were announced in the introduction. We start with the recursion formula. 

\begin{Theorem}[Recursion formula] \label{thm:recursion}
	Let $x\in W_\mathrm{aff}^+$ and $s\in S$ such that $x<xs\in W_\mathrm{aff}^+$. Then
	\[ \JSF_{xs} = \nu_p\big( m(xsx^{-1}) \cdot p \big) \cdot N_x + \JSF_x \cdot s . \]
\end{Theorem}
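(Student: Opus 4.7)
The plan is to expand both sides using Corollary \ref{cor:JSFRsetasph} and then match the terms using the decomposition $R_L(xs) = R_L(x) \sqcup \{xsx^{-1}\}$ provided by Lemma \ref{lem:Rset} (whose hypotheses $x < xs$ and $s \in S$ are precisely those given here).

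First I apply Corollary \ref{cor:JSFRsetasph} to $xs \in W_\mathrm{aff}^+$ to obtain
\[ \JSF_{xs} = \sum_{t \in R_L(xs)} \nu_p\big( m(t) \cdot p \big) \cdot N_{t \cdot xs}. \]
Lemma \ref{lem:Rset} then splits this sum into a single contribution from the new reflection $t = xsx^{-1}$ and a sum over the old reflections $t \in R_L(x)$. Since $s$ is an involution, a direct computation gives $(xsx^{-1})(xs) = x \cdot s^2 = x$, so the contribution from the new reflection is $\nu_p\big( m(xsx^{-1}) \cdot p \big) \cdot N_x$, matching the first term on the right-hand side of the claim.

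For the remaining contribution, I use the right-action rule $N_{yz} = N_y \cdot z$ to rewrite $N_{t \cdot xs} = N_{tx} \cdot s$ for each $t \in R_L(x)$. Since right multiplication by $s$ is $\Z$-linear, I can pull it out of the sum to get
\[ \sum_{t \in R_L(x)} \nu_p\big( m(t) \cdot p \big) \cdot N_{t \cdot xs} = \bigg( \sum_{t \in R_L(x)} \nu_p\big( m(t) \cdot p \big) \cdot N_{tx} \bigg) \cdot s = \JSF_x \cdot s, \]
where the last equality is Corollary \ref{cor:JSFRsetasph} applied to $x$. Adding the two contributions gives the desired identity.

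This argument is essentially a bookkeeping exercise, so I do not anticipate a serious obstacle. The only points to be mindful of are: (i) the identity $(xsx^{-1})(xs) = x$, which relies solely on $s$ being an involution; and (ii) the consistent use of the conventions $N_{wy} = \det(w) \cdot N_y$ for $w \in W_\mathrm{fin}$ and $N_{xy} = N_x \cdot y$, which ensure that the rewriting $N_{t \cdot xs} = N_{tx} \cdot s$ remains valid even when $tx$ fails to be of minimal length in its $W_\mathrm{fin}$-coset.
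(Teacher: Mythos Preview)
Your proof is correct and follows essentially the same approach as the paper's own argument: expand $\JSF_{xs}$ via Corollary~\ref{cor:JSFRsetasph}, split the sum using $R_L(xs)=R_L(x)\sqcup\{xsx^{-1}\}$ from Lemma~\ref{lem:Rset}, and factor out the right $s$-action on the remaining terms. Your citation of Lemma~\ref{lem:Rset} for the decomposition is in fact more precise than the paper, which (apparently by a slip) cites Lemma~\ref{lem:hyperplaneseparates} at that step.
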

\begin{proof}
	Recall from Corollary \ref{cor:JSFRsetasph} that
	\[ \JSF_{x} = \sum_{t\in R_L(x)} \nu_p\big( m(t) \cdot p \big) \cdot N_{tx} \qquad \text{and} \qquad \JSF_{xs} = \sum_{t\in R_L(xs)} \nu_p\big( m(t) \cdot p \big) \cdot N_{txs} , \]
	where $R_L(xs)=R_L(x) \sqcup \{ xsx^{-1} \}$ by Lemma \ref{lem:hyperplaneseparates}. We conclude that
	\begin{align*}
	\JSF_{xs} & = \sum_{t\in R_L(xs)} \nu_p\big( m(t) \cdot p \big) \cdot N_{txs} \\
	& = \nu_p\big( m(xsx^{-1}) \cdot p \big) \cdot N_x + \sum_{t\in R_L(x)} \nu_p\big( m(t) \cdot p \big) \cdot N_{txs} \\
	& = \nu_p\big( m(xsx^{-1}) \cdot p \big) \cdot N_x + \sum_{t\in R_L(x)} \nu_p\big( m(t) \cdot p \big) \cdot N_{tx} \cdot s \\
	& = \nu_p\big( m(xsx^{-1}) \cdot p \big) \cdot N_x + \JSF_x \cdot s ,
	\end{align*}
	as claimed.
\end{proof}

Next we prove the duality formula, which shows that the Jantzen filtration and the Andersen filtration are closely related to each other, at least on a combinatorial level.

\begin{Theorem}[Duality formula]
	For all $x,y\in W_\mathrm{aff}^+$, we have
	\[ \big\langle \ASF_x , N_y \big\rangle = \big\langle N_x^* , \JSF_y \big\rangle . \]
\end{Theorem}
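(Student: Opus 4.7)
My plan is to expand both sides using Corollaries \ref{cor:JSFRsetasph} and \ref{cor:ASFRsetasph} and then to match their (at most one) nonzero summands via $W_\mathrm{fin}$-conjugation. Concretely, since $\langle N_z^*,N_{z'}\rangle$ equals the sign of the connecting $W_\mathrm{fin}$-element when $z\in W_\mathrm{fin} z'$ and vanishes otherwise, the LHS collects contributions of $s\in R(W_\mathrm{aff})\setminus R(W_\mathrm{fin})$ satisfying $sx\in W_\mathrm{fin} y$ and $sx>x$, while the RHS collects contributions of $s\in R(W_\mathrm{aff})$ satisfying $sy\in W_\mathrm{fin} x$ and $sy<y$. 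The argument in the proof of Lemma \ref{lem:AtMostOneReflectionWfinCoset} (which really only uses $s\notin R(W_\mathrm{fin})$, not the hypothesis $sy<y$) shows that each of these two ``coset'' conditions is satisfied by at most one reflection, so each side has at most one nonzero summand.

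Assume such a reflection $s\in R(W_\mathrm{aff})\setminus R(W_\mathrm{fin})$ exists on the LHS, with $sx=wy$ for the unique $w\in W_\mathrm{fin}$. I set $s^\prime\coloneqq w^{-1}sw$; then a direct calculation using $s^2=1$ yields $s^\prime y = w^{-1}s(wy) = w^{-1}s\cdot sx = w^{-1}x$, so $s^\prime$ is the unique candidate reflection for the RHS, with connecting element $w^{-1}$. The numerical contributions coincide: $\det(w^{-1})=\det(w)$, and from $w^{-1}s_{\beta,m}w = s_{w^{-1}(\beta),m}$ one reads off that $m(s^\prime)=\pm m(s)$, whence $\nu_p\big(m(s^\prime)\cdot p\big) = \nu_p\big(m(s)\cdot p\big)$.

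What remains, and is the only real obstacle, is to show that the filtering conditions $sx>x$ and $s^\prime y<y$ are equivalent. This is where Lemma \ref{lem:reflectionWfinconjugate} is decisive: applied to $y$ and $w$, it gives $w R_L(y) w^{-1} \setminus R(W_\mathrm{fin}) = R_L(wy) \setminus R(W_\mathrm{fin})$. Now $wy = sx$, and $s(wy) = s\cdot sx = x$, so $sx>x$ is equivalent to $s\in R_L(wy)\setminus R(W_\mathrm{fin})$, which by Lemma \ref{lem:reflectionWfinconjugate} translates to $s^\prime = w^{-1}sw \in R_L(y)\setminus R(W_\mathrm{fin})$, i.e.\ to $s^\prime y<y$. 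Hence both sides have a nonzero summand simultaneously (with matching values) or both vanish, which proves the duality formula.
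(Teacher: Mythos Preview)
Your argument is correct and follows essentially the same route as the paper: expand both sides via Corollaries \ref{cor:JSFRsetasph} and \ref{cor:ASFRsetasph}, use (the proof of) Lemma \ref{lem:AtMostOneReflectionWfinCoset} to reduce each side to at most one term, and then match them via the $W_\mathrm{fin}$-conjugation $s\mapsto s'=w^{-1}sw$ together with Lemma \ref{lem:reflectionWfinconjugate}. One small completeness point: you only treat the case where a candidate $s$ exists on the LHS; to finish, note (by the evident symmetry of the construction $s\leftrightarrow s'$, or, as the paper does, by running the same conjugation argument starting from a putative RHS reflection) that if no such $s$ exists then no RHS candidate in $R(W_\mathrm{aff})\setminus R(W_\mathrm{fin})$ exists either, so both sides vanish.
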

\begin{proof}
	By Corollaries \ref{cor:JSFRsetasph} and \ref{cor:ASFRsetasph}, we have
	\[ \JSF_y = \sum_{ \substack{ s \in R(W_\mathrm{aff}) \\ sy<y } } \nu_p\big( m(s) \cdot p \big) \cdot N_{sy} \qquad \text{and} \qquad \ASF_x = \sum_{ \substack{ s \in R(W_\mathrm{aff}) \setminus R(W_\mathrm{fin}) \\ sx>x } } \nu_p\big( m(s) \cdot p \big) \cdot N_{sx}^* . \]
	If we write $\JSF_y=\sum_z a_{y,z} \cdot N_z$ in terms of the basis $\{ N_z \mid z \in W_\mathrm{aff}^+ \}$ then a summand $\nu_p\big( m(s) \cdot p \big) \cdot N_{sy}$ with $s\in R_L(y)$ contributes $\det(w) \cdot \nu_p\big( m(s) \cdot p \big)$ to $a_{y,z}$ precisely when $sy=wz$ for some $w\in W_\mathrm{fin}$. Furthermore, for any $z\in W_\mathrm{aff}^+$, there exists at most one $s\in R_L(y)$ with $sy \in W_\mathrm{fin} z$ by Lemma \ref{lem:AtMostOneReflectionWfinCoset}. We conclude that
	\[ a_{y,z} = \begin{cases}
	\det(w) \cdot \nu_p\big( m(s) \cdot p \big) & \text{if } sy=wz \text{ for some } s\in R_L(y) \text{ and } w\in W_\mathrm{fin} , \\
	0 & \text{otherwise} .
	\end{cases} \]
	Analogously, we can write $\ASF_x = \sum_z b_{x,z} \cdot N_z^*$ with
	\[ b_{x,z} = \begin{cases}
	\det(w) \cdot \nu_p\big( m(s) \cdot p \big) & \text{if } x<sx=wz \text{ for some } s\in R(W_\mathrm{aff}) \setminus R(W_\mathrm{fin}) \text{ and } w\in W_\mathrm{fin} , \\
	0 & \text{otherwise} .
	\end{cases} \]
	If $sy = wx$ for some $s\in R_L(y)$ and $w\in W_\mathrm{fin}$ then $s\notin W_\mathrm{fin}$ because $sy<y$ and $y$ is minimal in the coset $W_\mathrm{fin}y$. By Lemma \ref{lem:reflectionWfinconjugate}, we have $w^{-1}sw \in R_L(w^{-1}y) \setminus R(W_\mathrm{fin})$ and it follows that \[ x = (w^{-1}sw) w^{-1} y < w^{-1} y = (w^{-1} s w) x . \]
	This implies that
	\[ a_{y,x} = \det(w) \cdot \nu_p \big( m(s) \cdot p \big) = \det(w^{-1}) \cdot \nu_p \big( m(w^{-1}sw) \cdot p \big) = b_{x,y} . \]
	If $x<sx=wy$ for some $s\in R(W_\mathrm{aff}) \setminus R(W_\mathrm{fin})$ and $w \in W_\mathrm{fin}$ then $s\in R_L(wy)$ and $w^{-1}sw\in R_L(y)$ by Lemma \ref{lem:reflectionWfinconjugate}. Furthermore, we have $w^{-1}x  =(w^{-1}sw) y$ and it follows that
	\[ b_{x,y} = \det(w) \cdot \nu_p\big( m(s) \cdot p \big) = \det(w^{-1}) \cdot \nu_p\big( m(w^{-1}sw) \cdot p \big) = a_{y,x} . \]
	We conclude that
	\[ \big\langle \ASF_x , N_y \big\rangle = b_{x,y} = a_{y,x} = \big\langle N_x^* , \JSF_y \big\rangle \]
	for all $x,y\in W_\mathrm{aff}^+$.
\end{proof}

\begin{Remark} \label{rem:singularduality}
	The duality formula
	\[ \langle \ASF_x , N_y \rangle = \langle N_x^* , \JSF_y \rangle \]
	can be generalized to non-regular weights, even when the characteristic $p$ is smaller than the Coxeter number of $G$. For $\lambda\in X^+$, we define $\JSF_\lambda \in \Z[X]^{W_\mathrm{fin}}$ and $\ASF_\lambda \in \big(\Z[X]^{W_\mathrm{fin}}\big)^*$ by
	\[ \JSF_\lambda = \sum_{i>0} \ch \Delta(\lambda)^i \qquad \text{and} \qquad \langle \ASF_\lambda , \ch T \rangle = \sum_{i>0} \dim \bar{F}_\lambda^i(T) \]
	for every tilting module $T$, where $\langle -\,,-\rangle \colon \big(\Z[X]^{W_\mathrm{fin}}\big)^* \times \Z[X]^{W_\mathrm{fin}} \to \Z$ denotes the evaluation pairing. Writing $\chi_\lambda^* \in \big(\Z[X]^{W_\mathrm{fin}}\big)^*$ for the `dual basis' with $\langle \chi_\lambda^* , \chi_\mu \rangle=\delta_{\lambda,\mu}$ for all $\mu\in X^+$, we have the following natural extension of the duality formula to arbitrary weights:
	\[ \langle \ASF_\lambda , \chi_\mu \rangle = \langle \chi_\lambda^* , \JSF_\mu \rangle \]
	This formula has already appeared (implicitly) in \cite{AndersenKulkarni}. (Compare the bijection constructed in the proof of Proposition 4.9 in \cite{AndersenKulkarni} with the sum formulas from Section \ref{sec:filtrations}.) In the same paper, H.H. Andersen and U. Kulkarni also give a representation theoretic explanation using a certain \emph{torsion Euler characteristic}, see also Subsection \ref{subsec:Eulercharacteristic} below.
\end{Remark}

\section{Two interpretations of the recursion formula} \label{sec:interpretation}

In this section, we give two representation theoretic explanations for the recursion formula
\[ \JSF_{xs}=\nu_p(xsx^{-1})\cdot N_x + \JSF_x \cdot s \]
from Theorem \ref{thm:recursion}. In both cases, we use wall-crossing functors to relate the sum formulas for Weyl modules with highest weights in adjacent alcoves.

Recall that we write $G_\Zp$ for the group scheme over $\Zp$ corresponding to $G$. As the Jantzen filtration of $\Delta(\lambda)$ is defined using the generator $c_\lambda$ of $\Hom_{G_\Zp}\big( \Delta_{\Z_p}(\lambda) , \nabla_{\Z_p}(\lambda) \big)$, we work with lifts of translation functors to the category $\Rep(G_{\Z_p})$ of finitely generated rational $G_{\Z_p}$-modules. This is possible by results of H.H. Andersen, as explained below.

\subsection{Translation functors over \texorpdfstring{$\Zp$}{Zp}}

By Proposition 5.2 and Theorem 5.4 in \cite{AndersenFiltrationsCohomology}, the category $\Rep(G_{\Z_p})$ admits a decomposition into linkage classes
\[ \Rep(G_{\Z_p}) = \bigoplus_{\lambda\in \overline{C}_\mathrm{fund}\cap X} \Rep_\lambda(G_{\Z_p}) , \]
where $\Rep_\lambda(G_{\Z_p})$ denotes the full subcategory of $G_{\Z_p}$-modules $M$ such that $M \otimes \F$ belongs to $\Rep_\lambda(G)$. Using the projection functors $\pr_\lambda \colon \Rep(G_{\Z_p}) \to \Rep_\lambda(G_{\Z_p})$, it is possible to define mutually right and left adjoint exact translation functors
\[ T_\lambda^\mu \colon \Rep_\lambda(G_{\Z_p}) \to \Rep_\mu(G_{\Z_p}) \qquad \text{and} \qquad T_\mu^\lambda \colon \Rep_\mu(G_{\Z_p}) \to \Rep_\lambda(G_{\Z_p}) \]
for $\lambda,\mu\in \overline{C}_\mathrm{fund} \cap X$ just as in Section II.7 in \cite{Jantzen}, see Lemma 5.5 in \cite{AndersenFiltrationsCohomology}. For $G_{\Z_p}$-modules $M$ and $N$ in $\Rep_\lambda(G_{\Z_p})$ and $\Rep_\mu(G_{\Z_p})$, respectively, the adjunction between $T_\lambda^\mu$ and $T_\mu^\lambda$ gives rise to natural isomorphisms
\[ \mathrm{adj}_1 \colon \Hom_{G_\Zp}\big( M , T_\mu^\lambda N \big) \longrightarrow \Hom_{G_\Zp}\big( T_\lambda^\mu M , N \big) \]
and
\[ \mathrm{adj}_2 \colon \Hom_{G_\Zp}\big( N , T_\lambda^\mu M \big) \longrightarrow \Hom_{G_\Zp}\big( T_\mu^\lambda N , M \big) \]
such that
\begin{align*}
&\mathrm{adj}_1(f\circ g) = \mathrm{adj}_1(f)\circ T_\lambda^\mu(g), \qquad &\mathrm{adj}_2(f\circ g) = \mathrm{adj}_2(f)\circ T_\mu^\lambda(g) , \\
&\mathrm{adj}_1(T_\mu^\lambda f\circ g) = f \circ \mathrm{adj}_1(g), \qquad &\mathrm{adj}_2(T_\lambda^\mu f \circ g) = f \circ \mathrm{adj}_2(g)
\end{align*}
for all morphisms $f$ and $g$ in suitable $\Hom$-spaces. Now fix $\lambda\in C_\mathrm{fund}\cap X$ and $\mu\in \overline{C}_\mathrm{fund} \cap X$ such that $\Stab_{W_\mathrm{aff}}(\mu)=\{1,s\}$ for some $s\in S$. For $x\in W_\mathrm{aff}^+$ with $x<xs$ and $xs\in W_\mathrm{aff}^+$, we have $x\Cdot\mu\in X^+$ and there are isomorphisms
\[ T_\lambda^\mu \Delta_{\Z_p}(x\Cdot\lambda) \cong \Delta_{\Z_p}(x\Cdot\mu) \cong T_\lambda^\mu \Delta_{\Z_p}(xs\Cdot\lambda)
\qquad \text{and} \qquad
T_\lambda^\mu \nabla_{\Z_p}(x\Cdot\lambda) \cong \nabla_{\Z_p}(x\Cdot\mu) \cong T_\lambda^\mu \nabla_{\Z_p}(xs\Cdot\lambda) . \]
Furthermore, there are short exact sequences
\[ 0 \longrightarrow \Delta_\Zp(xs\Cdot\lambda) \xrightarrow{~\,i\,~} T_\mu^\lambda \Delta_\Zp(x\Cdot\mu) \xrightarrow{~\,\pi\,~} \Delta_\Zp(x\Cdot\lambda) \longrightarrow 0 \]
and
\[ 0 \longrightarrow \nabla_\Zp(x\Cdot\lambda) \xrightarrow{~\,i^\prime~} T_\mu^\lambda \nabla_\Zp(x\Cdot\mu) \xrightarrow{~\,\pi^\prime~} \nabla_\Zp(xs\Cdot\lambda) \longrightarrow 0 \]
with
\[ i=\mathrm{adj}_1^{-1}(\id_{\Delta_\Zp(x\Cdot\mu)}) , \qquad i^\prime=\mathrm{adj}_1^{-1}(\id_{\nabla_\Zp(x\Cdot\mu)}) \]
and
\[ \pi=\mathrm{adj}_2(\id_{\Delta_\Zp(x\Cdot\mu)}) , \qquad \pi^\prime=\mathrm{adj}_2(\id_{\nabla_\Zp(x\Cdot\mu)}) , \]
see Section 2.4 in \cite{AndersenFiltrationsTilting}.
Let us further consider the morphisms
\begin{align*}
r = \mathrm{adj}_2\big( \id_{\Delta_\Zp(x\Cdot\mu)} \big) & \colon T_\mu^\lambda \Delta_\Zp(x\Cdot\mu) \longrightarrow \Delta_\Zp(xs\Cdot\lambda) \\
r^\prime = \mathrm{adj}_2\big( \id_{\nabla_\Zp(x\Cdot\mu)} \big) & \colon T_\mu^\lambda \nabla_\Zp(x\Cdot\mu) \longrightarrow \nabla_\Zp(x\Cdot\lambda) \\
s = \mathrm{adj}_1^{-1}\big( \id_{\Delta_\Zp(x\Cdot\mu)} \big) & \colon \Delta_\Zp(x\Cdot\lambda) \longrightarrow T_\mu^\lambda \Delta_\Zp(x\Cdot\mu) \\
s^\prime = \mathrm{adj}_1^{-1}\big( \id_{\nabla_\Zp(x\Cdot\mu)} \big) & \colon \nabla_\Zp(xs\Cdot\lambda) \longrightarrow T_\mu^\lambda \nabla_\Zp(x\Cdot\mu) .
\end{align*}
The following two lemmas give certain relations for the composition of these homomorphisms and for their composition with the homomorphisms $c_{x\Cdot\lambda}$ and $c_{xs\Cdot\lambda}$.

\begin{Lemma}[Andersen] \label{lem:Andersen1}
	Let $m=\nu_p\big( \dim \Delta(x\Cdot\mu) \big)$. Then we have
	\[ r\circ i = p^m \cdot \id_{\Delta_\Zp(xs\Cdot\lambda)} , \qquad r^\prime \circ i^\prime = p^m \cdot \id_{\nabla_\Zp(x\Cdot\lambda)} , \]
	\[ \pi \circ s = p^m \cdot \id_{\Delta_\Zp(x\Cdot\lambda)} , \qquad \pi^\prime \circ s^\prime = p^m \cdot \id_{\nabla_\Zp(xs\Cdot\lambda)} . \]
\end{Lemma}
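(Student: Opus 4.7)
The plan is to prove all four identities by the same method. The first step is to observe that each of the four compositions is an endomorphism of a Weyl or induced module $\Delta_\Zp(\nu)$ or $\nabla_\Zp(\nu)$. The endomorphism ring of either is isomorphic to $\Zp$: this follows from the $\Ext$-vanishing recalled in the Notation section (specialised to $\lambda=\mu=\nu$, $i=0$) together with $\Zp$-flatness and the fact that the $\nu$-weight space is free of rank one, so that any endomorphism is determined by its scalar action on this weight space. Consequently, each of $r\circ i$, $r^\prime\circ i^\prime$, $\pi\circ s$ and $\pi^\prime\circ s^\prime$ equals $c\cdot\id$ for some $c\in\Zp$, and the task reduces to verifying $c=p^m$ in each case.

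To determine $c$, I would transport the composition via $T_\lambda^\mu$ and exploit the four adjunction identities listed immediately before the lemma. Since $T_\lambda^\mu$ sends the source and target of each composition to $\Delta_\Zp(x\Cdot\mu)$ or $\nabla_\Zp(x\Cdot\mu)$, and since the eight maps $i,\pi,r,s,i^\prime,\pi^\prime,r^\prime,s^\prime$ are by construction adjunction images of identity morphisms, the translated endomorphism $T_\lambda^\mu(\pi\circ s)$ (and its analogues) rewrites as a composite of the unit and counit of the $(T_\lambda^\mu,T_\mu^\lambda)$-adjunction applied to $\Delta_\Zp(x\Cdot\mu)$ (respectively to its $\nabla$-analogue). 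Unwinding the definition of $T_\mu^\lambda$ as $\pr_\mu\circ(-\otimes V)$ for an explicit finite-rank $\Zp$-lattice $V$, a direct calculation identifies this unit-counit composite with multiplication by $\dim\Delta(x\Cdot\mu)$, up to a unit in $\Zp$. By definition of $m$, the scalar $\dim\Delta(x\Cdot\mu)$ has $p$-adic valuation exactly $m$, and after absorbing the $\Zp^\times$-factor into the free choice of the generator $c_\nu$ of $\Hom_{G_\Zp}\bigl(\Delta_\Zp(\nu),\nabla_\Zp(\nu)\bigr)$, this yields $c=p^m$. The three remaining identities follow by the same argument, interchanging $\mathrm{adj}_1$ with $\mathrm{adj}_2$ and $\Delta$ with $\nabla$ as appropriate (the $\nabla$-versions use duality combined with the same unit-counit calculation).

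The main obstacle will be the unit-counit calculation at the level of $T_\lambda^\mu T_\mu^\lambda\Delta_\Zp(x\Cdot\mu)$: one must check that the scalar is exactly $\dim\Delta(x\Cdot\mu)$ (up to a unit) and not merely some $p$-adic integer with valuation $\geq m$. This is the only place where the argument uses more than formal properties of adjunction, and it relies on Andersen's detailed analysis of translation functors over $\Zp$ in \cite{AndersenFiltrationsCohomology,AndersenFiltrationsTilting}, together with the compatibility of $\Delta_\Zp$ with base change and tensor products. Once this single computation is in place, the uniform scalar-plus-adjunction recipe above delivers all four equalities in the lemma.
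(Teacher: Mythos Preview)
The paper does not give a proof here; it simply cites Lemma~2.4 of \cite{AndersenFiltrationsTilting}. Your outline is a reasonable sketch of the approach taken there: each composite is an endomorphism of a module whose endomorphism ring is $\Zp$, hence a scalar, and one identifies the scalar by unwinding the biadjoint structure of the translation functors. You also correctly flag that the one nontrivial step is the explicit unit--counit computation and defer it to Andersen's analysis, which is effectively what the paper does as well.

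One point needs correcting. The unit ambiguity cannot be ``absorbed into the free choice of the generator $c_\nu$'': the maps $i,\pi,r,s,i^\prime,\pi^\prime,r^\prime,s^\prime$ are defined purely via the adjunctions applied to identity morphisms and do not involve any $c_\nu$ at all. What \emph{is} true is that writing $\id_{\Delta_\Zp(x\Cdot\mu)}$ (or $\id_{\nabla_\Zp(x\Cdot\mu)}$) in these definitions presupposes fixed isomorphisms $T_\lambda^\mu\Delta_\Zp(x\Cdot\lambda)\cong\Delta_\Zp(x\Cdot\mu)\cong T_\lambda^\mu\Delta_\Zp(xs\Cdot\lambda)$ and their $\nabla$-analogues, each of which is only unique up to $\Zp^\times$. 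The four identities therefore hold up to units in general, and exactly for suitable choices of these identifications; this is all that is needed downstream (only $\cok(r^\prime\circ i^\prime)\cong\nabla_x/p^m$ is used in the proof of Theorem~\ref{thm:JSF}). A minor slip: you write $T_\mu^\lambda=\pr_\mu\circ(-\otimes V)$, but translation \emph{to} $\lambda$ is $\pr_\lambda\circ(-\otimes V)$.
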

\begin{proof}
	See Lemma 2.4 in \cite{AndersenFiltrationsTilting}.
\end{proof}

\begin{Lemma}[Andersen] \label{lem:Andersen2}
	Let $\beta=T_\mu^\lambda c_{x\Cdot\mu}$. Up to units in $\Zp$, we have $c_{x\Cdot\mu}=T_\lambda^\mu c_{x\Cdot \lambda}$ and there are commutative diagrams
	\begin{equation*}
	\begin{tikzpicture}[baseline={([yshift=-.5ex]current bounding box.center)}]
	\node (C1) at (0,0) {$T_\mu^\lambda \Delta_\Zp(x\Cdot\mu)$};
	\node (C2) at (3,0) {$T_\mu^\lambda \nabla_\Zp(x\Cdot\mu)$};
	\node (C3) at (0,-2) {$\Delta_\Zp(x\Cdot\lambda)$};
	\node (C4) at (3,-2) {$\nabla_\Zp(x\Cdot\lambda)$};
	
	\draw[->] (C1) -- node[above] {\small $\beta$}  (C2);
	\draw[->] (C1) -- node[left] {\small $\pi$} (C3);
	\draw[->] (C2) -- node[right] {\small $r^\prime$} (C4);
	\draw[->] (C3) -- node[below] {\small $c_{x\Cdot\lambda}$} (C4);
	\end{tikzpicture}
	\qquad \text{and} \qquad
	\begin{tikzpicture}[baseline={([yshift=-.5ex]current bounding box.center)}]
	\node (C1) at (0,0) {$T_\mu^\lambda \Delta_\Zp(x\Cdot\mu)$};
	\node (C2) at (3,0) {$T_\mu^\lambda \nabla_\Zp(x\Cdot\mu)$};
	\node (C3) at (0,-2) {$\Delta_\Zp(xs\Cdot\lambda)$};
	\node (C4) at (3,-2) {$\nabla_\Zp(xs\Cdot\lambda)$};
	
	\draw[->] (C1) -- node[above] {\small $\beta$}  (C2);
	\draw[->] (C3) -- node[left] {\small $i$} (C1);
	\draw[->] (C2) -- node[right] {\small $\pi^\prime$} (C4);
	\draw[->] (C3) -- node[below] {\small $c_{xs\Cdot\lambda}$} (C4);
	\end{tikzpicture}
	\end{equation*}
\end{Lemma}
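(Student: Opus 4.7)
The plan is to split the lemma into three parts: first identify $T_\lambda^\mu c_{x\Cdot\lambda}$ with $c_{x\Cdot\mu}$ up to a unit of $\Zp$, and then derive each commutative diagram by transferring morphisms through the two adjunction relations listed above. Both diagrams will reduce to an identity of morphisms in a rank-one $\Zp$-module of Hom's, so in each case the only real content is a mod-$p$ nonvanishing check.

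For the first part, I would argue as follows. Since $T_\lambda^\mu\Delta_\Zp(x\Cdot\lambda)\cong\Delta_\Zp(x\Cdot\mu)$ and $T_\lambda^\mu\nabla_\Zp(x\Cdot\lambda)\cong\nabla_\Zp(x\Cdot\mu)$, the morphism $T_\lambda^\mu c_{x\Cdot\lambda}$ lies in $\Hom_{G_\Zp}\bigl(\Delta_\Zp(x\Cdot\mu),\nabla_\Zp(x\Cdot\mu)\bigr)=\Zp\cdot c_{x\Cdot\mu}$, so $T_\lambda^\mu c_{x\Cdot\lambda}=u\cdot c_{x\Cdot\mu}$ for a unique $u\in\Zp$. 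To see that $u$ is a unit, I reduce mod $p$: the image of $\bar c_{x\Cdot\lambda}$ is $L(x\Cdot\lambda)$, so by exactness of $T_\lambda^\mu$ the image of $T_\lambda^\mu\bar c_{x\Cdot\lambda}$ is $T_\lambda^\mu L(x\Cdot\lambda)$. In the paper's convention, $\Delta(x\Cdot\lambda)$ is the quotient of $T_\mu^\lambda\Delta(x\Cdot\mu)$, so $T_\mu^\lambda\Delta(x\Cdot\mu)\twoheadrightarrow L(x\Cdot\lambda)$; by adjunction this gives a nonzero morphism $\Delta(x\Cdot\mu)\to T_\lambda^\mu L(x\Cdot\lambda)$, forcing $T_\lambda^\mu L(x\Cdot\lambda)\cong L(x\Cdot\mu)\neq 0$. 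Hence $T_\lambda^\mu\bar c_{x\Cdot\lambda}$ is a nonzero element of the one-dimensional $\F$-space $\Hom_G(\Delta(x\Cdot\mu),\nabla(x\Cdot\mu))$, and $u\in\Zp^\times$. Rescaling $c_{x\Cdot\mu}$ by $u^{-1}$, I may assume $c_{x\Cdot\mu}=T_\lambda^\mu c_{x\Cdot\lambda}$ on the nose.

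For the first diagram I would rewrite both composites as $\mathrm{adj}_2$ of suitable morphisms. Using $\mathrm{adj}_2(f)\circ T_\mu^\lambda(g)=\mathrm{adj}_2(f\circ g)$ with $r'=\mathrm{adj}_2(\id_{\nabla_\Zp(x\Cdot\mu)})$ yields $r'\circ\beta=\mathrm{adj}_2(\id\circ c_{x\Cdot\mu})=\mathrm{adj}_2(c_{x\Cdot\mu})$, where $c_{x\Cdot\mu}$ is viewed as a map $\Delta_\Zp(x\Cdot\mu)\to T_\lambda^\mu\nabla_\Zp(x\Cdot\lambda)$. Using $f\circ\mathrm{adj}_2(g)=\mathrm{adj}_2(T_\lambda^\mu f\circ g)$ with $\pi=\mathrm{adj}_2(\id_{\Delta_\Zp(x\Cdot\mu)})$ yields $c_{x\Cdot\lambda}\circ\pi=\mathrm{adj}_2(T_\lambda^\mu c_{x\Cdot\lambda})$. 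The two right-hand sides coincide by the identification from the first part. For the second diagram, I use $\mathrm{adj}_1(T_\mu^\lambda f\circ g)=f\circ\mathrm{adj}_1(g)$ with $i=\mathrm{adj}_1^{-1}(\id_{\Delta_\Zp(x\Cdot\mu)})$ to obtain $\mathrm{adj}_1(\beta\circ i)=c_{x\Cdot\mu}\circ\id=c_{x\Cdot\mu}$, so $\beta\circ i=\mathrm{adj}_1^{-1}(c_{x\Cdot\mu})$. Hence $\pi'\circ\beta\circ i$ lies in $\Hom_{G_\Zp}\bigl(\Delta_\Zp(xs\Cdot\lambda),\nabla_\Zp(xs\Cdot\lambda)\bigr)=\Zp\cdot c_{xs\Cdot\lambda}$ and equals $v\cdot c_{xs\Cdot\lambda}$ for some $v\in\Zp$. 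To show $v\in\Zp^\times$, apply $\Hom_G(\Delta(xs\Cdot\lambda),-)$ to the mod-$p$ reduction of the short exact sequence $0\to\nabla(x\Cdot\lambda)\to T_\mu^\lambda\nabla(x\Cdot\mu)\to\nabla(xs\Cdot\lambda)\to 0$; since $\Hom_G(\Delta(xs\Cdot\lambda),\nabla(x\Cdot\lambda))=0$ (different highest weights) and $\Ext^1$ between Weyl and induced modules vanishes, composition with $\bar\pi'$ is an isomorphism onto $\Hom_G(\Delta(xs\Cdot\lambda),\nabla(xs\Cdot\lambda))$. As $\mathrm{adj}_1$ and the translation functors commute with reduction mod $p$, $\mathrm{adj}_1^{-1}(\bar c_{x\Cdot\mu})$ is nonzero, so $\bar\pi'\circ\mathrm{adj}_1^{-1}(\bar c_{x\Cdot\mu})$ is a nonzero multiple of $\bar c_{xs\Cdot\lambda}$; rescaling $c_{xs\Cdot\lambda}$ by $v^{-1}$ gives equality.

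The only real obstacle is checking that the relevant scalars are units of $\Zp$; everything else is formal manipulation with the adjunction identities. In both Stage 1 and Stage 3 this reduces to a mod-$p$ nonvanishing argument, powered respectively by the fact that $T_\lambda^\mu L(x\Cdot\lambda)\neq 0$ (forced by the direction of the short exact sequence for $T_\mu^\lambda\Delta(x\Cdot\mu)$) and by the one-dimensionality of the relevant Hom-spaces coming from the $\mathrm{Ext}$-vanishing between Weyl and induced modules.
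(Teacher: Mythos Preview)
Your argument is correct, and the adjunction manipulations you carry out for the two diagrams are exactly what one needs; in particular your computations $r'\circ\beta=\mathrm{adj}_2(c_{x\Cdot\mu})=\mathrm{adj}_2(T_\lambda^\mu c_{x\Cdot\lambda})=c_{x\Cdot\lambda}\circ\pi$ and $\mathrm{adj}_1(\beta\circ i)=c_{x\Cdot\mu}$ are clean and valid. The route, however, differs from the paper's in two respects. For the identification $c_{x\Cdot\mu}=T_\lambda^\mu c_{x\Cdot\lambda}$, the paper stays entirely over $\Zp$: applying $\Hom_{G_\Zp}\big(\Delta_\Zp(x\Cdot\lambda),-\big)$ to the short exact sequence $0\to\nabla_\Zp(x\Cdot\lambda)\xrightarrow{i'}T_\mu^\lambda\nabla_\Zp(x\Cdot\mu)\to\nabla_\Zp(xs\Cdot\lambda)\to 0$ and using the $\Zp$-level $\Ext$-vanishing from the Notation section, one sees that $\varphi\mapsto\mathrm{adj}_1(i'\circ\varphi)$ is an isomorphism of rank-one free $\Zp$-modules, and a short adjunction computation shows that it sends $c_{x\Cdot\lambda}$ to $T_\lambda^\mu c_{x\Cdot\lambda}$. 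For the commutative diagrams, the paper simply cites Lemma~2.5 of \cite{AndersenFiltrationsTilting} rather than giving the direct derivation you supply.

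Your mod-$p$ detours (via $T_\lambda^\mu L(x\Cdot\lambda)\neq 0$ for the first part, and via $\Hom_G(\Delta(xs\Cdot\lambda),-)$ applied to the reduced $\nabla$-sequence for the second diagram) are valid but not needed: the same exact-sequence argument works directly over $\Zp$, because the relevant $\Hom$ and $\Ext^1$ groups between $\Delta_\Zp$ and $\nabla_\Zp$ with distinct highest weights already vanish integrally. Running your own argument for the second diagram over $\Zp$ instead of $\F$ immediately shows that $\pi'\circ(-)$ is a $\Zp$-isomorphism on the rank-one $\Hom$-modules, so $v\in\Zp^\times$ without any reduction step. The upside of your approach is that it is self-contained and makes the role of the adjunction identities explicit; the paper's approach is shorter and avoids invoking facts about translation of simple modules.
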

\begin{proof}
	The short exact sequence
	\[ 0 \longrightarrow \nabla_\Zp(x\Cdot\lambda) \xrightarrow{~\,i^\prime~} T_\mu^\lambda \nabla_\Zp(x\Cdot\mu) \xrightarrow{~\,\pi^\prime~} \nabla_\Zp(xs\Cdot\lambda) \longrightarrow 0 \]
	gives rise to an exact sequence
	\begin{align*}
	0 \to \Hom_{G_\Zp}\big( \Delta_\Zp(x\Cdot\lambda) , \nabla_\Zp(x\Cdot\lambda) \big) \to \Hom_{G_\Zp}& \big( \Delta_\Zp(x\Cdot\lambda) , T_\mu^\lambda \nabla_\Zp(x\Cdot\mu) \big) \\ & \to \Hom_{G_\Zp}\big( \Delta_\Zp(x\Cdot\lambda) , \nabla_\Zp(xs\Cdot\lambda) \big) = 0 ,
	\end{align*}
	so the map $\varphi \mapsto \mathrm{adj}_1(i^\prime\circ \varphi)$ gives an isomorphism
	\begin{align*}
	\Hom_{G_\Zp}\big( \Delta_\Zp(x\Cdot\lambda) , \nabla_\Zp(x\Cdot\lambda) \big) & \cong \Hom_{G_\Zp}\big( \Delta_\Zp(x\Cdot\lambda) , T_\mu^\lambda \nabla_\Zp(x\Cdot\mu) \big) \\
	& \cong \Hom_{G_\Zp}\big( T_\lambda^\mu \Delta_\Zp(x\Cdot\lambda) , \nabla_\Zp(x\Cdot\mu) \big) \\
	& \cong \Hom_{G_\Zp}\big( \Delta_\Zp(x\Cdot\mu) , \nabla_\Zp(x\Cdot\mu) \big) .
	\end{align*}
	Hence up  to a unit in $\Z_p$, we have
	\[ c_{x\Cdot\mu} = \mathrm{adj}_1(i^\prime\circ c_{x\Cdot\lambda}) = \mathrm{adj}_1(i^\prime) \circ T_\lambda^\mu c_{x\Cdot\lambda} = \id_{\nabla_\Zp(\mu)} \circ T_\lambda^\mu c_{x\Cdot\lambda} = T_\lambda^\mu c_{x\Cdot\lambda} \]
	matching the first claim. Commutativity of the diagrams follows from Lemma 2.5 in \cite{AndersenFiltrationsTilting}.
\end{proof}

\begin{Remark}
	The first statement in Lemma \ref{lem:Andersen2} is true in greater generality, as has been pointed out by the referee:
	For $\delta,\nu\in C_\mathrm{fund}$ such that $x\Cdot\nu\in X^+$ and $x\Cdot\nu$ belongs to the \emph{upper closure} of the \emph{facet} containing $x\Cdot\delta$, we have $T_\delta^\nu c_{x\Cdot\delta} = c_{x\Cdot\nu}$.
	(See Section II.6.2 in \cite{Jantzen} for the terminology.)
	
	Indeed, by Proposition II.7.13 and Lemma II.B.9 in \cite{Jantzen}, the module $T_\nu^\delta \nabla_\Zp(x\Cdot\nu)$ has a filtration with subquotients the induced modules $\nabla_\Zp(xw\Cdot\delta)$ for $w\in \Stab_{W_\mathrm{aff}}(\nu)$, each weight $xw\Cdot\delta$ occurring precisely once. As $x\Cdot\delta$ is minimal among these weights, the first submodule in this filtration can be chosen to be isomorphic to $\nabla_\Zp(x\Cdot\delta)$ (see Remark 4 in Section II.4.16 in \cite{Jantzen}), so there is a short exact sequence
	\[ 0 \longrightarrow \nabla_\Zp(x\Cdot\delta) \longrightarrow T_\nu^\delta \nabla(x\Cdot\delta) \longrightarrow M \longrightarrow 0 . \]
	Now the claim follows as in the proof of Lemma \ref{lem:Andersen2}.
\end{Remark}

\subsection{Cokernels and exact sequences}

Recall from Section \ref{sec:filtrations} that every homomorphism of $\Z_p$-modules $\varphi \colon M \to N$ gives rise to a filtration
\[ M = F^0(\varphi) \supseteq F^1(\varphi) \supseteq F^2(\varphi) \supseteq \cdots \]
with $F^i(\varphi) \coloneqq \varphi^{-1}(p^i \cdot N)$ for $i\geq 0$. We write $\bar{F}^i(\varphi)$ for the subspace spanned by the image of $F^i(\varphi)$ in $M \otimes \F$. Under some additional hypotheses on $\varphi$, we can give an alternative description of the length of this filtration and of the sum of the dimensions of the vector spaces $\bar{F}^i(\varphi)$ for $i>0$.

\begin{Definition}
	Let $M$ be a finitely generated torsion $\Zp$-module. Then $M \cong \prod_{i=1}^r \Zp / p^{d_i} \Zp$ for certain positive integers $d_1,\ldots,d_r$ and we write
	\[ \ell(M) = \sum_{i=1}^r d_i \qquad \text{and} \qquad \mathrm{tmax}(M) = \max\{ d_i \mid 1 \leq i \leq r \} \]
	for the \emph{composition length} and the \emph{maximal torsion} of $M$, respectively.
\end{Definition}

Let us write $\Qp$ for the field of $p$-adic numbers. The following lemma is well-known, see for instance Section II.8.18 in \cite{Jantzen}.

\begin{Lemma} \label{lem:DimensionSumFiltrationLength}
	Let $M$ and $N$ be free $\Zp$-modules of finite rank and let $\varphi \colon M \to N$ be a homomorphism of $\Zp$-modules such that $\varphi \otimes \Qp$ is an isomorphism. Then $\cok(\varphi)$ is a torsion $\Zp$-module and
	\[ \sum_{i>0} \dim \bar{F}^i(\varphi) = \ell\big( \cok(\varphi) \big) \qquad \text{and} \qquad \max \{ i \mid \bar{F}^i(\varphi) \neq 0 \} = \mathrm{tmax}\big( \cok(\varphi) \big) . \]
\end{Lemma}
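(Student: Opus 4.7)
The plan is to reduce everything to a simultaneous normal form for $\varphi$. Since $\Zp$ is a discrete valuation ring (hence a PID) and $M,N$ are free of finite rank, the structure theorem for finitely generated modules over a PID, applied in the form of Smith normal form, provides $\Zp$-bases $e_1,\dots,e_r$ of $M$ and $f_1,\dots,f_r$ of $N$ together with integers $d_1,\dots,d_r \geq 0$ such that $\varphi(e_j) = p^{d_j} f_j$. Note that $M$ and $N$ have the same rank because $\varphi \otimes \Qp$ is an isomorphism, and this same hypothesis forces all $d_j$ to be finite (no $e_j$ maps to zero), so $\cok(\varphi) \cong \bigoplus_{j=1}^r \Zp/p^{d_j}\Zp$ is torsion with $\ell(\cok \varphi) = \sum_j d_j$ and $\mathrm{tmax}(\cok \varphi) = \max_j d_j$ (where summands with $d_j=0$ vanish and contribute nothing).

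Next I would compute $F^i(\varphi)$ in the chosen basis. An element $\sum_j a_j e_j \in M$ lies in $F^i(\varphi)$ iff $\sum_j a_j p^{d_j} f_j \in p^i N$, iff $p^i \mid a_j p^{d_j}$ for every $j$, iff $a_j \in p^{\max(i-d_j,0)}\Zp$ for every $j$. Hence
\[ F^i(\varphi) = \bigoplus_{j=1}^r p^{\max(i-d_j,0)} \Zp \cdot e_j . \]
Reducing modulo $p$ identifies $M\otimes \F$ with $\bigoplus_j \F\cdot \bar e_j$, and the image $\bar F^i(\varphi)$ is spanned exactly by those $\bar e_j$ for which $\max(i-d_j,0)=0$, i.e.\ $d_j \geq i$. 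Therefore
\[ \dim_\F \bar F^i(\varphi) = \#\{ j \mid d_j \geq i \} . \]

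From this the two identities are immediate. Summing over $i>0$ and interchanging summations gives
\[ \sum_{i>0} \dim_\F \bar F^i(\varphi) = \sum_{i>0} \#\{ j \mid d_j \geq i \} = \sum_{j=1}^r d_j = \ell\big(\cok(\varphi)\big) , \]
while
\[ \max\{ i \mid \bar F^i(\varphi) \neq 0 \} = \max\{ i \mid \exists\, j\text{ with } d_j \geq i \} = \max_j d_j = \mathrm{tmax}\big(\cok(\varphi)\big) . \]

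There is no real obstacle here; the only thing to be a little careful about is the passage from $F^i(\varphi)$ to its image $\bar F^i(\varphi)$ in $M/pM$, which is why the Smith normal form is convenient: in adapted bases the filtration on $M$ is literally a direct sum of submodules of the form $p^k\Zp \cdot e_j$, and reducing mod $p$ then just counts how many of the exponents have dropped to $0$.
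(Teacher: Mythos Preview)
Your proof is correct and follows essentially the same approach as the paper: both arguments diagonalize $\varphi$ via the elementary divisor theorem (Smith normal form), compute $F^i(\varphi)$ coordinatewise in the adapted bases, and read off the dimension of $\bar F^i(\varphi)$ as the number of elementary divisors with $p$-adic valuation at least $i$. The only cosmetic difference is that the paper keeps the elementary divisors as arbitrary nonzero elements $a_i\in\Zp$ and works with $\nu_p(a_i)$, whereas you normalize them to powers $p^{d_j}$ from the outset.
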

\begin{proof}
	By applying the exact functor $-\otimes\Qp$ to the exact sequence
	\[ 0 \to \ker(\varphi) \to M \xrightarrow{~\varphi~} N \to \cok(\varphi) \to 0 \]
	and using the fact that a $\Zp$-module $M^\prime$ is torsion if and only if $M^\prime\otimes\Qp \cong 0$, we see that $\ker(\varphi)$ and $\cok(\varphi)$ are torsion $\Zp$-modules.
	As $M$ is free, it follows that $\varphi$ is injective and by the elementary divisor theorem, we can choose $\Z_p$-bases $m_1,\ldots,m_r$ of $M$ and $n_1,\ldots,n_r$ of $N$ such that $\varphi(m_i)=a_i\cdot n_i$ for some $0 \neq a_i\in\Zp$ for all $i$. Then $F^j(\varphi)$ has a basis given by
	\[ \{ m_i \mid \nu_p(a_i)\geq j \} \cup \{ p^{j-\nu_p(a_i)} \cdot m_i \mid \nu_p(a_i)<j \} , \]
	so $\{ m_i \otimes 1 \mid \nu_p(a_i)\geq j \}$ is a basis of the vector space $\bar{F}^j(\varphi)$ and it follows that
	\[ \sum_{j>0} \dim \big( \bar{F}^j(\varphi) \big) = \sum_{j>0} \abs{ \{ i \mid \nu_p(a_i)\geq j \} } = \abs{ \{ (i,j) \mid \nu_p(a_i)\geq j \} } = \sum_{i=1}^{r} \nu_p(a_i) = \ell\big( \cok(\varphi) \big) . \]
	Furthermore, the description of the bases of the vector spaces $\bar{F}^j(\varphi)$ implies that
	\[ \max\{ j \geq 0 \mid \bar{F}^j(\varphi) \neq 0 \} = \max\{ \nu_p(a_i) \mid 1 \leq i \leq r \} = \mathrm{tmax}\big( \cok(\varphi) \big) , \]
	as claimed.
\end{proof}

Every rational $G_{\Z_p}$-module $M$ has a weight space decomposition $M=\bigoplus_{\delta\in X} M_\delta$. If $M$ is a torsion module then so are all of the weight spaces and this allows us to define a character for $M$.

\begin{Definition}
	Let $M$ be a torsion $G_\Zp$-module. The \emph{torsion character} of $M$ is
	\[ \tch M \coloneqq \sum_{\delta\in X} \ell(M_\delta) \cdot e^\delta \in \Z[X]^{W_\mathrm{fin}} . \]
\end{Definition}

\begin{Remark} \label{rem:torsioncharacter}
	Let us write $\Rep^\mathrm{tor}(G_\Zp)$ for the category of finitely generated torsion $G_\Zp$-modules and $[\Rep^\mathrm{tor}(G_\Zp)]$ for its Grothendieck group. It is straightforward to see from the definition that characters of torsion $G_{\Z_p}$-modules are additive on short exact sequences, so the torsion character defines an isomorphism between $[\Rep^\mathrm{tor}(G_\Zp)]$ and $\Z[X]^{W_\mathrm{fin}}$ that sends the isomorphism class of the torsion $G_\Zp$-module $\nabla_\Zp(\lambda) \otimes \big( \Zp / p \Zp \big)$ to $\chi_\lambda$ for all $\lambda\in X^+$.
	(Indeed, all irreducible $G_\Zp$-modules are torsion modules over $\Zp$ and remain irreducible under extension of scalars to $\F$, so the torsion character takes a basis of $[\Rep^\mathrm{tor}(G_\Zp)]$ to a basis of $\Z[X]^{W_\mathrm{fin}}$ because the corresponding statement is true for $[\Rep(G)]$.)
\end{Remark}

\begin{Lemma} \label{lem:filtrationsumcharacter}
	Let $\varphi \colon M \to N$ be a homomorphism of $G_{\Z_p}$-modules. Further suppose that $M$ and $N$ are free of finite rank over $\Z_p$ and that $\varphi \otimes \Qp$ is an isomorphism. Then
	\[ \sum_{i>0} \ch \bar{F}^i(\varphi) = \tch \cok(\varphi) . \]
\end{Lemma}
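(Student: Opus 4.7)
The plan is to reduce the statement to the weight-by-weight situation handled by Lemma \ref{lem:DimensionSumFiltrationLength}. Since $\varphi$ is a homomorphism of $G_{\Z_p}$-modules, it respects the weight space decompositions $M=\bigoplus_{\delta\in X} M_\delta$ and $N=\bigoplus_{\delta\in X} N_\delta$, so $\varphi$ restricts on each weight to a $\Z_p$-linear map $\varphi_\delta\colon M_\delta\to N_\delta$. Because $M$ and $N$ are free of finite rank over $\Z_p$, each $M_\delta$ and $N_\delta$ is a free $\Z_p$-module of finite rank (direct summands of free modules over a PID), and only finitely many are nonzero. Moreover, the assumption that $\varphi\otimes\Q_p$ is an isomorphism, applied weight by weight, shows that each $\varphi_\delta\otimes\Q_p$ is an isomorphism as well; hence Lemma \ref{lem:DimensionSumFiltrationLength} applies to every $\varphi_\delta$.

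Next I would verify that formation of the filtration $F^i$, its reduction mod $p$, and the cokernel are all compatible with the weight space decomposition. Concretely, $\varphi^{-1}(p^iN)=\bigoplus_\delta \varphi_\delta^{-1}(p^i N_\delta)$, so $F^i(\varphi)_\delta = F^i(\varphi_\delta)$, and tensoring with $\F$ preserves the direct sum, giving $\bar{F}^i(\varphi)_\delta=\bar{F}^i(\varphi_\delta)$ and thus
\[ \ch \bar{F}^i(\varphi) = \sum_{\delta\in X} \dim\bigl(\bar{F}^i(\varphi_\delta)\bigr)\cdot e^\delta . \]
Similarly $\cok(\varphi)_\delta = \cok(\varphi_\delta)$, and each $\cok(\varphi_\delta)$ is a finitely generated torsion $\Z_p$-module by Lemma \ref{lem:DimensionSumFiltrationLength}, so the torsion character of $\cok(\varphi)$ (viewed as a $G_{\Z_p}$-module) is well-defined and equals $\sum_\delta \ell\bigl(\cok(\varphi_\delta)\bigr)\cdot e^\delta$.

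Putting these pieces together, I would compute
\[ \sum_{i>0} \ch \bar{F}^i(\varphi) = \sum_{\delta\in X}\Bigl(\sum_{i>0}\dim \bar{F}^i(\varphi_\delta)\Bigr)\cdot e^\delta = \sum_{\delta\in X} \ell\bigl(\cok(\varphi_\delta)\bigr)\cdot e^\delta = \ch\cok(\varphi), \]
where the middle equality is Lemma \ref{lem:DimensionSumFiltrationLength} applied to each $\varphi_\delta$. The interchange of the two sums is legitimate because only finitely many $\delta$ contribute (the support of the weights of $M$) and for each such $\delta$ only finitely many $i$ contribute (since $\cok(\varphi_\delta)$ has finite composition length, so $\bar{F}^i(\varphi_\delta)=0$ for $i>\mathrm{tmax}(\cok(\varphi_\delta))$).

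There is really no hard step here; the only thing to be careful about is the compatibility of the cokernel and the filtration with the weight space decomposition, which is a consequence of the exactness of taking $\delta$-weight spaces for $G_{\Z_p}$-modules. Once that compatibility is recorded, the claim follows by summing the scalar identity of Lemma \ref{lem:DimensionSumFiltrationLength} over all weights $\delta$.
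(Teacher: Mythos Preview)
Your proposal is correct and follows essentially the same approach as the paper: restrict $\varphi$ to weight spaces, observe that the filtration and the cokernel are compatible with the weight space decomposition, and then apply Lemma~\ref{lem:DimensionSumFiltrationLength} weight by weight. The paper's proof is slightly terser but structurally identical.
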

\begin{proof}
	By Lemma \ref{lem:DimensionSumFiltrationLength}, the $G_{\Z_p}$-module $\cok(\varphi)$ is torsion, so that $\tch \cok(\varphi)$ is well-defined.
	For $\delta\in X$, denote by $\varphi_\delta \colon M_\delta \to N_\delta$ the restriction of $\varphi$ to the $\delta$-weight spaces of $M$ and $N$. As $M$ and $N$ are free of finite rank, so are $M_\delta$ and $N_\delta$, and as $\varphi\otimes\Qp$ is an isomorphism, so is $\varphi_\delta \otimes \Qp$. The filtration $F^\bullet(\varphi)$ is compatible with the weight space decomposition (that is $F^i(\varphi_\delta)=F^i(\varphi)_\delta$) and Lemma \ref{lem:DimensionSumFiltrationLength} implies that
	\[ \sum_{i>0} \dim \bar{F}^i(\varphi)_\delta = \sum_{i>0} \dim \bar{F}^i(\varphi_\delta) = \ell\big(\cok(\varphi_\delta)\big) = \ell\big( \cok(\varphi)_\delta \big) . \]
	Now the claim is immediate from the definition of torsion characters.
\end{proof}

\begin{Corollary} \label{cor:JSFcharactercokernel}
	For $\delta\in X^+$, we have
	\[ \JSF_\delta = \tch\big( \cok (c_\delta) \big) . \]
\end{Corollary}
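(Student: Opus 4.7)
The plan is to apply Lemma \ref{lem:filtrationsumcharacter} directly to the homomorphism $c_\delta \colon \Delta_{\Z_p}(\delta) \to \nabla_{\Z_p}(\delta)$, which fits into the framework of that lemma as a map of $G_{\Z_p}$-modules.

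First I would unpack the definition. By the setup in Section \ref{sec:filtrations}, the Jantzen filtration layers are defined as $\Delta(\delta)^i = \bar{F}^i(c_\delta)$, the image in $\Delta_{\Z_p}(\delta)\otimes\F$ of $F^i(c_\delta)=c_\delta^{-1}(p^i\cdot\nabla_{\Z_p}(\delta))$. Since $c_\delta$ is in fact a morphism of $G_{\Z_p}$-modules, the subspace $\bar{F}^i(c_\delta)$ carries a natural $G$-module structure, and by Theorem \ref{thm:citeJSF} we have
\[ \JSF_\delta = \sum_{i>0} \ch \Delta(\delta)^i = \sum_{i>0} \ch \bar{F}^i(c_\delta). \]

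Next I would verify the hypotheses of Lemma \ref{lem:filtrationsumcharacter} for $\varphi = c_\delta$. The modules $\Delta_{\Z_p}(\delta)$ and $\nabla_{\Z_p}(\delta)$ are free of finite rank over $\Z_p$: this is a standard fact (the Weyl module is defined by base change from a $\Z$-form, the induced module admits a good filtration by $\Z_p$-free pieces, and they have the same character, equal to Weyl's character). The remaining hypothesis is that $c_\delta\otimes\Q_p$ is an isomorphism. Over the fraction field $\Q_p$ (characteristic zero), the category of rational $G_{\Q_p}$-modules is semisimple, so both $\Delta_{\Q_p}(\delta)$ and $\nabla_{\Q_p}(\delta)$ are irreducible of the same highest weight $\delta$, and hence isomorphic; by Schur's lemma the map $c_\delta\otimes\Q_p$, being a nonzero $G_{\Q_p}$-homomorphism between these simple modules, is an isomorphism. (Nonzero because it generates the $\Z_p$-module $\Hom_{G_{\Z_p}}(\Delta_{\Z_p}(\delta),\nabla_{\Z_p}(\delta))\cong\Z_p$.)

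With both hypotheses in hand, Lemma \ref{lem:filtrationsumcharacter} gives
\[ \sum_{i>0} \ch \bar{F}^i(c_\delta) = \ch \cok(c_\delta), \]
and combining with the identity above yields $\JSF_\delta = \ch(\cok(c_\delta))$. There is no real obstacle here: the corollary is essentially a reinterpretation of the Jantzen filtration via the elementary divisors of $c_\delta$, with Lemma \ref{lem:filtrationsumcharacter} doing all the work; the only non-trivial ingredient is the observation that $c_\delta$ becomes an isomorphism after inverting $p$, which follows from the semisimplicity of $\Rep(G_{\Q_p})$.
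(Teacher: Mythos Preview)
Your proof is correct and follows essentially the same approach as the paper: you unpack the definition $\JSF_\delta=\sum_{i>0}\ch\bar F^i(c_\delta)$, verify that $c_\delta\otimes\Q_p$ is an isomorphism between the simple $G_{\Q_p}$-modules $\Delta_{\Q_p}(\delta)$ and $\nabla_{\Q_p}(\delta)$, and then apply Lemma~\ref{lem:filtrationsumcharacter}. The only difference is that you spell out the freeness hypothesis and the nonvanishing of $c_\delta$ more explicitly than the paper does.
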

\begin{proof}
	By definition, we have $\JSF_\delta=\sum_{i>0} \ch \Delta(\delta)^i$, where $\Delta(\delta)^i=\bar{F}^i(c_\delta)$. Now $c_\delta \otimes \Qp$ is an isomorphism between the simple $G_\Qp$-modules $\Delta_\Qp(\delta)$ and $\nabla_\Qp(\delta)$ of highest weight $\delta$ and the claim follows from Lemma \ref{lem:filtrationsumcharacter}.
\end{proof}

By Remark \ref{rem:torsioncharacter} and Corollary \ref{cor:JSFcharactercokernel}, we can compute $\JSF_\delta$ by computing the class $[\cok(c_\delta)]$ in the Grothendieck group of finitely generated torsion $G_\Zp$-modules, for all $\delta\in X^+$.

As before, let us fix $\lambda\in C_\mathrm{fund}\cap X$ and $\mu\in \overline{C}_\mathrm{fund}\cap X$ with $\Stab_{W_\mathrm{aff}}(\mu)=\{1,s\}$. The \emph{$s$-wall crossing functor} on $\Rep_\lambda(G)$ is defined as $\Theta_s = T_\mu^\lambda \circ T_\lambda^\mu$. Writing $\Rep_\lambda^\mathrm{tor}(G_\Zp)$ for the category of finitely generated torsion $G_\Zp$-modules in $\Rep_\lambda(G_\Zp)$, there is an isomorphism of abelian groups
\[ [\Rep_\lambda^\mathrm{tor}(G_\Zp)] \longrightarrow M_\mathrm{asph} \]
with $\big[ \nabla_\Zp(x\Cdot \lambda) \otimes \big(\Zp / p \Zp \big) \big] \mapsto N_x$. The latter can be upgraded to an isomorphism of $\Z[W_\mathrm{aff}]$-modules by setting
$ [ M ] \cdot (s+1) \coloneqq [ \Theta_s M ] $
for $s\in S$ and for a $G_\Zp$-module $M$ in $\Rep_\lambda^\mathrm{tor}(G_\Zp)$. For the remainder of this section, we write $\Delta_x = \Delta_\Zp(x\Cdot \lambda)$, $\nabla_x = \nabla_\Zp(x\Cdot \lambda)$ and $c_x=c_{x\Cdot \lambda}\colon \Delta_x \to \nabla_x$ for all $x\in W_\mathrm{aff}^+$. For $m>0$, we set $\nabla_x / p^m \coloneqq \nabla_x \otimes ( \Zp / p^m \Zp )$ so that $[ \nabla_x / p^m ] = m \cdot [ \nabla_x / p ]$.

The following theorem gives a representation theoretic explanation for the (combinatorial) recursion formula from Theorem \ref{thm:recursion}.

\begin{Theorem} \label{thm:JSF}
	Let $x\in W_\mathrm{aff}^+$ and $s\in S$ such that $x<xs\in W_\mathrm{aff}^+$. Then there exist torsion $G_\Zp$-modules $A$, $B$, $C$, $D$ and $E$ and exact sequences
	\begin{align}
	0 & \longrightarrow A \longrightarrow \Theta_s \cok(c_x) \longrightarrow C \longrightarrow 0 , \label{eq:JSFseq1} \\ 
	0 & \longrightarrow B \longrightarrow \cok(c_{xs}) \longrightarrow C \longrightarrow 0 , \label{eq:JSFseq2} \\
	0 \longrightarrow D & \longrightarrow A \longrightarrow \cok(c_x) \longrightarrow E \longrightarrow 0 , \label{eq:JSFseq3} \\
	0 \longrightarrow D & \longrightarrow B \longrightarrow \nabla_x / p^m \longrightarrow E \longrightarrow 0 , \label{eq:JSFseq4}
	\end{align}
	where $m=\nu_p\big( m(xsx^{-1}) \cdot p \big)$. In particular, we have
	\[ [ \cok(c_{xs}) ] = [ \Theta_s \cok(c_x) ] - [ \cok(c_x) ] + [ \nabla_x / p^m ] = [ \cok(c_x) ] \cdot s + m \cdot [ \nabla_x / p ] \]
	in the Grothendieck group of torsion $G_\Zp$-modules.
\end{Theorem}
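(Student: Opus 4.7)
My plan is to construct all five modules as explicit subquotients arising from a single map, $\beta := T_\mu^\lambda c_{x\Cdot\mu} \colon T_\mu^\lambda\Delta_{\Zp}(x\Cdot\mu) \to T_\mu^\lambda\nabla_{\Zp}(x\Cdot\mu)$, using the short exact sequences involving $T_\mu^\lambda\Delta_{\Zp}(x\Cdot\mu)$ and $T_\mu^\lambda\nabla_{\Zp}(x\Cdot\mu)$ together with the compatibility relations of Lemmas \ref{lem:Andersen1} and \ref{lem:Andersen2}. The exponent $m = \nu_p(m(xsx^{-1}) \cdot p)$ enters through the identities $r' \circ i' = p^m \cdot \id$ and $\pi \circ s = p^m \cdot \id$; one verifies that $\nu_p(\dim \Delta(x\Cdot\mu))$ equals $\nu_p(m(xsx^{-1}) \cdot p)$ via Weyl's dimension formula, using that $x\Cdot\mu$ lies on the unique hyperplane $H_{xsx^{-1}}$ and that $p \geq h$ forces $\langle x\Cdot\mu + \rho, \gamma^\vee\rangle$ to be coprime to $p$ for every positive root $\gamma$ other than the one appearing in $xsx^{-1}$.

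The initial reduction identifies $\Theta_s \cok(c_x)$ with $\cok(\beta)$: since $\Theta_s$ is exact and $\beta$ agrees with $\Theta_s c_x$ up to a unit by Lemma \ref{lem:Andersen2}, applying $\Theta_s$ to $0 \to \Delta_x \xrightarrow{c_x} \nabla_x \to \cok(c_x) \to 0$ produces $0 \to T_\mu^\lambda\Delta_{\Zp}(x\Cdot\mu) \xrightarrow{\beta} T_\mu^\lambda\nabla_{\Zp}(x\Cdot\mu) \to \Theta_s\cok(c_x) \to 0$, with $\beta$ injective because $\beta \otimes \Qp$ is an isomorphism of free $\Zp$-modules of finite rank.

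For \eqref{eq:JSFseq1} and \eqref{eq:JSFseq2}, set $I := \pi'\beta(T_\mu^\lambda\Delta_{\Zp}(x\Cdot\mu))$ and $C := \nabla_{xs}/I$. The surjection $\pi'$ (with kernel $i'(\nabla_x)$) induces $\cok(\beta) \twoheadrightarrow C$ with kernel $A := \nabla_x/(\nabla_x \cap \operatorname{im}(\beta))$, giving \eqref{eq:JSFseq1}. Using $c_{xs} = \pi'\beta i$ from Lemma \ref{lem:Andersen2}, we have $c_{xs}(\Delta_{xs}) \subseteq I$, so $\cok(c_{xs}) \twoheadrightarrow C$ has kernel $B := I/c_{xs}(\Delta_{xs})$, giving \eqref{eq:JSFseq2}. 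For \eqref{eq:JSFseq3} and \eqref{eq:JSFseq4}, set $K := \pi(\ker(\pi'\beta)) \subseteq \Delta_x$; the snake lemma applied to the diagram comparing $0 \to \Delta_{xs} \to T_\mu^\lambda\Delta_{\Zp}(x\Cdot\mu) \to \Delta_x \to 0$ with $0 \to \nabla_{xs} \xrightarrow{=} \nabla_{xs} \to 0 \to 0$ via $(c_{xs}, \pi'\beta, 0)$ identifies $B$ with $\Delta_x/K$. I then define $A \to \cok(c_x)$ by $v_0 \mapsto p^m v_0 \bmod c_x(\Delta_x)$ and $B \to \nabla_x/p^m$ by $d \mapsto c_x(d) \bmod p^m\nabla_x$; well-definedness uses $r'\beta = c_x\pi$ and $r' \circ i' = p^m \cdot \id$, since $i'(v_0) = \beta(w)$ gives $p^m v_0 = r'(i'(v_0)) = c_x\pi(w) \in c_x(\Delta_x)$, and similarly $d \in K$ gives $c_x(d) \in p^m\nabla_x$.

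The main obstacle is the final step showing that both maps have the same kernel and cokernel, yielding the common $D$ and $E$. Writing $K_1 := c_x^{-1}(p^m\nabla_x)$, $M := \{v_0 \in \nabla_x : p^m v_0 \in c_x(\Delta_x)\}$, and $J := \{v_0 \in \nabla_x : i'(v_0) \in \operatorname{im}(\beta)\}$ (so $A = \nabla_x/J$), the key observation is that $c_x$ induces a $G_{\Zp}$-module isomorphism $K_1 \xrightarrow{\sim} M$ sending $d$ to the unique $v_0 \in \nabla_x$ with $p^m v_0 = c_x(d)$ (well-defined by torsion-freeness of $\nabla_x$), and this isomorphism restricts to $K \xrightarrow{\sim} J$. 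Consequently both kernels equal $D := K_1/K \cong M/J$, and a direct computation identifies both cokernels with $E := \nabla_x/(c_x(\Delta_x) + p^m\nabla_x)$. The Grothendieck-group identity then follows by additivity from the four exact sequences.
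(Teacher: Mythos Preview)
Your proposal is correct and follows essentially the same route as the paper's proof: both organize the argument around $\beta=\Theta_s c_x$, the two short exact sequences for $T_\mu^\lambda\Delta_\Zp(x\Cdot\mu)$ and $T_\mu^\lambda\nabla_\Zp(x\Cdot\mu)$, and the identities $c_{xs}=\pi'\beta i$, $c_x\pi=r'\beta$, $r'i'=p^m\cdot\id$ from Lemmas~\ref{lem:Andersen1} and~\ref{lem:Andersen2}, and the modules $A,B,C,D,E$ you construct coincide (up to natural isomorphism) with those of the paper. The only difference is presentational: the paper packages the construction into a chain of commutative diagrams to which the snake lemma is applied, whereas you describe the same subquotients and maps explicitly at the level of elements (e.g.\ your direct verification that $c_x$ induces $K_1\xrightarrow{\sim}M$ restricting to $K\xrightarrow{\sim}J$ is precisely what the paper obtains from the snake lemma on its final diagram with vertical maps $\pi_0,r'_0,\psi$).
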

\begin{proof}
	Recall that we have fixed $\mu\in \overline{C}_\mathrm{fund}\cap X$ with $\Stab_{W_\mathrm{aff}}(\mu)=\{1,s\}$ and note that the unique reflection hyperplane containing $x\Cdot\mu$ is the one corresponding to $xsx^{-1}$. Using the Weyl dimension formula (and the fact that $p\geq h$), it is straightforward to see that
	\[ m = \nu_p\big( m(xsx^{-1}) \cdot p \big) = \nu_p\big( \dim \Delta(x\Cdot\mu) \big) . \]
	We adopt notation from Lemmas \ref{lem:Andersen1} and \ref{lem:Andersen2}, in particular $\beta= T_\mu^\lambda c_{x\Cdot\mu} = \Theta_s c_x$ and
	\begin{align*}
	i = \mathrm{adj}_1^{-1}(\id_{\Delta_\Zp(x\Cdot\mu)}) \colon & \Delta_{xs} \longrightarrow \Theta_s \Delta_{x} , \\
	r^\prime = \mathrm{adj}_2\big( \id_{\nabla_\Zp(x\Cdot\mu)} \big) \colon & \Theta_s \nabla_x \longrightarrow \nabla_x , \\
	\pi^\prime = \mathrm{adj}_2(\id_{\nabla_\Zp(x\Cdot\mu)}) \colon & \Theta_s \nabla_x \longrightarrow \nabla_{xs} .
	\end{align*}
	Furthermore, we denote by $q \colon \Theta_s \Delta_x \to \Theta_s \Delta_x / \ker(\pi^\prime\circ\beta)$ the canonical quotient homomorphism and by $\vartheta \colon \Theta_s \Delta_x / \ker(\pi^\prime\circ\beta) \to \nabla_{xs}$ the unique homomorphism such that $\pi^\prime\circ\beta = \vartheta \circ q$. Then there is a commutative diagram
	\begin{center}
		\begin{tikzpicture}
		\node (C1) at (0,0) {$0$};
		\node (C2) at (3,0) {$\Theta_s \Delta_x$};
		\node (C3) at (7,0) {$\Theta_s \nabla_x$};
		\node (C4) at (10,0) {$\cok(\beta)$};
		\node (C5) at (13,0) {$0$};
		\node (C6) at (0,-2) {$0$};
		\node (C7) at (3,-2) {$\Theta_s \Delta_x / \ker(\pi^\prime\circ\beta)$};
		\node (C8) at (7,-2) {$\nabla_{xs}$};
		\node (C9) at (10,-2) {$\cok(\vartheta)$};
		\node (C10) at (13,-2) {$0$};
		
		\draw[->] (C1) -- (C2);
		\draw[->] (C2) -- node[above] {\small $\beta$} (C3);
		\draw[->] (C3) -- (C4);
		\draw[->] (C4) -- (C5);
		\draw[->] (C6) -- (C7);
		\draw[->] (C7) -- node[below] {\small $\vartheta$} (C8);
		\draw[->] (C8) -- (C9);
		\draw[->] (C9) -- (C10);
		\draw[->] (C2) -- node[right] {\small $q$} (C7);
		\draw[->] (C3) -- node[right] {\small $\pi^\prime$} (C8);
		\draw[->] (C4) -- node[right] {\small $\pi^\prime_1$} (C9);
		\end{tikzpicture}
	\end{center}
	with exact rows, where $\pi^\prime_1$ is the homomorphism induced by $\pi^\prime$. As $q$ and $\pi^\prime$ are surjective, the snake lemma gives rise to an exact sequence
	\begin{equation} \label{eq:JSFseq5} 
	0 \to \ker(\pi^\prime\circ\beta) \to \ker(\pi^\prime) \to \ker(\pi^\prime_1) \to 0 \to 0 \to \cok(\overline{\pi^\prime}) \to 0 ,
	\end{equation}
	in particular $\cok(\pi^\prime_1)=0$ and we obtain a short exact sequence
	\begin{equation*}
	0 \to \ker(\pi^\prime_1) \to \cok(\beta) \to \cok(\vartheta) \to 0 .
	\end{equation*}
	By exactness of $\Theta_s$, we have $\cok(\beta)=\cok(\Theta_s c_x) \cong \Theta_s \cok(c_x)$ and with $A \coloneqq \ker(\pi^\prime_1)$ and $C \coloneqq \cok(\vartheta)$, we obtain the first exact sequence \eqref{eq:JSFseq1}.
	By the second commutative diagram in Lemma \ref{lem:Andersen2}, we have another commutative diagram
	\begin{center}
		\begin{tikzpicture}
		\node (C1) at (0,0) {$0$};
		\node (C2) at (3,0) {$\Delta_{xs}$};
		\node (C3) at (7,0) {$\nabla_{xs}$};
		\node (C4) at (10,0) {$\cok(c_{xs})$};
		\node (C5) at (13,0) {$0$};
		\node (C6) at (0,-2) {$0$};
		\node (C7) at (3,-2) {$\Theta_s \Delta_x / \ker(\pi^\prime\circ\beta)$};
		\node (C8) at (7,-2) {$\nabla_{xs}$};
		\node (C9) at (10,-2) {$C$};
		\node (C10) at (13,-2) {$0$};
		
		\draw[->] (C1) -- (C2);
		\draw[->] (C2) -- node[above] {\small $c_{xs}$} (C3);
		\draw[->] (C3) -- (C4);
		\draw[->] (C4) -- (C5);
		\draw[->] (C6) -- (C7);
		\draw[->] (C7) -- node[below] {\small $\vartheta$} (C8);
		\draw[->] (C8) -- (C9);
		\draw[->] (C9) -- (C10);
		\draw[->] (C2) -- node[right] {\small $q \circ i$} (C7);
		\draw[double equal sign distance] (C3) -- (C8);
		\draw[->] (C4) -- node[right] {\small $\eta$} (C9);
		\end{tikzpicture}
	\end{center}
	with exact rows, where $\eta$ is the induced homomorphism on the cokernels. This affords an exact sequence
	\[ 0 \to \ker(\eta) \to \cok(q\circ i) \to 0 \to \cok(\eta) \to 0 , \]
	so $\cok(\eta)=0$ and $\ker(\eta) \cong \cok(q \circ i)$. With $B \coloneqq \ker(\eta) \cong \cok(q \circ i)$, we obtain our second exact sequence
	\begin{equation*}
	0 \to B \to \cok(c_{xs}) \to C \to 0 .
	\end{equation*}
	Next note that we have a commutative diagram
	\begin{center}
		\begin{tikzpicture}
		\node (C1) at (0,0) {$0$};
		\node (C2) at (3,0) {$\Delta_{xs}$};
		\node (C3) at (7,0) {$\Theta_s \Delta_x$};
		\node (C4) at (11,0) {$\Delta_x$};
		\node (C5) at (14,0) {$0$};
		\node (C6) at (0,-2) {$0$};
		\node (C7) at (3,-2) {$\Delta_{xs}$};
		\node (C8) at (7,-2) {$\Theta_s \Delta_x / \ker(\pi^\prime\circ\beta)$};
		\node (C9) at (11,-2) {$\cok(q \circ i)$};
		\node (C10) at (14,-2) {$0$};
		
		\draw[->] (C1) -- (C2);
		\draw[->] (C2) -- node[above] {\small $i$} (C3);
		\draw[->] (C3) -- node[above] {\small $\pi$} (C4);
		\draw[->] (C4) -- (C5);
		\draw[->] (C6) -- (C7);
		\draw[->] (C7) -- node[below] {\small $q \circ i$} (C8);
		\draw[->] (C8) -- (C9);
		\draw[->] (C9) -- (C10);
		\draw[double equal sign distance] (C2) -- (C7);
		\draw[->] (C3) -- node[right] {\small $q$} (C8);
		\draw[->] (C4) -- node[right] {\small $q_1$} (C9);
		\end{tikzpicture}
	\end{center}
	with exact rows, where $q_1$ is induced by $q$. Therefore we have an exact sequence
	\[ 0 \to \ker(\pi^\prime \circ \beta) \to \ker(q_1) \to 0 \to 0 \to \cok(q_1) \to 0 , \]
	so $\cok(q_1)=0$ and $\ker(\pi^\prime \circ \beta) \cong \ker(q_1)$. Writing $\pi_0 \colon \ker(\pi^\prime\circ\beta) \to \Delta_\Zp(x\Cdot\lambda)$ for the restriction of $\pi$ to $\ker(\pi^\prime\circ\beta)$, we further have a short exact sequence
	\[ 0 \to \ker(\pi_0) \xrightarrow{\,\pi_0\,} \Delta_\Zp(xs\Cdot\lambda) \to \cok(q\circ i) \to 0 \]
	and it follows that $B \cong \cok(q\circ i) \cong \cok(\pi_0)$. Next, denote by $\beta_0 \colon \ker(\pi^\prime\circ\beta) \to \ker(\pi^\prime)$ the restriction of $\beta$ to $\ker(\pi^\prime\circ \beta)$ and note that the short exact sequence
	\[ 0 \to \ker(\pi^\prime\circ\beta) \xrightarrow{\beta_0} \ker(\pi^\prime) \to \ker(\pi^\prime_1) \to 0 \]
	from \eqref{eq:JSFseq5} yields an isomorphism $A=\ker(\pi^\prime_1) \cong \cok(\beta_0)$. Finally, let $r^\prime_0 \colon \ker(\pi^\prime) \to \nabla_x$ be the restriction of $r^\prime$ to $\ker(\pi^\prime)$. Using the first commutative diagram in Lemma \ref{lem:Andersen2}, we obtain a commutative diagram
	\begin{center}
		\begin{tikzpicture}
		\node (C1) at (0,0) {$0$};
		\node (C2) at (3,0) {$\ker(\pi^\prime\circ\beta)$};
		\node (C3) at (6,0) {$\ker(\pi^\prime)$};
		\node (C4) at (9,0) {$A$};
		\node (C5) at (12,0) {$0$};
		\node (C6) at (0,-2) {$0$};
		\node (C7) at (3,-2) {$\Delta_x$};
		\node (C8) at (6,-2) {$\nabla_x$};
		\node (C9) at (9,-2) {$\cok(c_x)$};
		\node (C10) at (12,-2) {$0$};
		
		\draw[->] (C1) -- (C2);
		\draw[->] (C2) -- node[above] {\small $\beta_0$} (C3);
		\draw[->] (C3) -- (C4);
		\draw[->] (C4) -- (C5);
		\draw[->] (C6) -- (C7);
		\draw[->] (C7) -- node[below] {\small $c_x$} (C8);
		\draw[->] (C8) -- (C9);
		\draw[->] (C9) -- (C10);
		\draw[->] (C2) -- node[right] {\small $\pi_0$} (C7);
		\draw[->] (C3) -- node[right] {\small $r^\prime_0$} (C8);
		\draw[->] (C4) -- node[right] {\small $\psi$} (C9);
		\end{tikzpicture}
	\end{center}
	with exact rows, where $\psi$ is induced by $r^\prime_0$. Now $\ker(\pi^\prime)=i^\prime(\nabla_{xs})$ and $r^\prime \circ i^\prime=p^m \cdot \id_{\nabla_x}$ by Lemma \ref{lem:Andersen1}, so $r^\prime_0$ is injective with
	\[ \cok(r^\prime_0) \cong \cok(r^\prime\circ i^\prime) \cong \nabla_x / p^m . \]
	As $B \cong \cok(\pi_0)$, the above diagram affords exact sequences
	\[ 0 \to \ker(\psi) \to B \to \nabla_x / p^m \to \cok(\psi) \to 0 \]
	and
	\[ 0 \to \ker(\psi) \to A \to \cok(c_x) \to \cok(\psi) \to 0 \]
	and with $D\coloneqq \ker(\psi)$ and $E\coloneqq \cok(\psi)$, we obtain the exact sequences \eqref{eq:JSFseq3} and \eqref{eq:JSFseq4}. The final claim easily follows from \eqref{eq:JSFseq1}--\eqref{eq:JSFseq4} as
	\begin{align*}
	[ \cok(c_{xs}) ] & = [ B ] + [ C ] \\
	& = [ B ] + [ \Theta_s \cok(c_x) ] - [ A ] \\
	& = [ B ] + [ \Theta_s \cok(c_x) ] - [ \cok(c_x) ] - [ D ] + [ E ] \\
	& = [ \Theta_s \cok(c_x) ] - [ \cok(c_x) ] + [ \nabla_x / p^m ] ,
	\end{align*}
	where $[ \nabla_x / p^m ] = m \cdot [ \nabla_x / p ]$ and $[\Theta_s \cok(c_x)] = [ \cok(c_x) ] \cdot (s+1)$.
\end{proof}

\begin{Corollary} \label{cor:upperbound}
	Let $\lambda\in C_\mathrm{fund}\cap X$, $x\in W_\mathrm{aff}^+$ and $s\in S$ such that $x<xs \in W_\mathrm{aff}^+$. Then
	\[ \max\big\{ i \geq 0 \mathrel{\big|} \Delta(xs\Cdot \lambda)^i \neq 0 \big\} \leq 2 \cdot \max\big\{ i \geq 0 \mathrel{\big|} \Delta(x\Cdot \lambda)^i \neq 0 \big\} + m , \]
	where $m=\nu_p\big( m(xsx^{-1}) \cdot p \big)$.
\end{Corollary}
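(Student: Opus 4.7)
The plan is to translate the length of the Jantzen filtration into the maximal torsion of the cokernel $\cok(c_\delta)$ via Lemma \ref{lem:DimensionSumFiltrationLength}, and then bound this invariant using the four exact sequences from Theorem \ref{thm:JSF}. Concretely, by Lemma \ref{lem:DimensionSumFiltrationLength} applied to $c_\delta \otimes \Qp$ (which is an isomorphism, as noted in the proof of Corollary \ref{cor:JSFcharactercokernel}), we have $\max\{i\geq 0\mid \Delta(\delta)^i \neq 0\} = \mathrm{tmax}(\cok(c_\delta))$ for $\delta=x\Cdot\lambda$ and $\delta=xs\Cdot\lambda$. So the corollary amounts to showing
\[ \mathrm{tmax}\big(\cok(c_{xs})\big) \leq 2 \cdot \mathrm{tmax}\big(\cok(c_x)\big) + m . \]

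The main tool is the following elementary behaviour of $\mathrm{tmax}$ on a finitely generated torsion $\Zp$-module $M$: $\mathrm{tmax}(M)$ is the smallest $k\geq 0$ with $p^k M = 0$. From this I deduce that for any short exact sequence $0 \to M^\prime \to M \to M^{\prime\prime} \to 0$ of such modules, $\mathrm{tmax}(M^\prime) \leq \mathrm{tmax}(M)$, $\mathrm{tmax}(M^{\prime\prime}) \leq \mathrm{tmax}(M)$, and $\mathrm{tmax}(M) \leq \mathrm{tmax}(M^\prime) + \mathrm{tmax}(M^{\prime\prime})$. I also note that since the translation functors are $\Zp$-linear and exact, the functor $\Theta_s$ preserves annihilation by a power of $p$, so $\mathrm{tmax}(\Theta_s N) \leq \mathrm{tmax}(N)$ for any torsion $N$ in $\Rep_\lambda(G_\Zp)$. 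Finally, since $\nabla_x$ is free of finite rank over $\Zp$, $\mathrm{tmax}(\nabla_x / p^m) = m$.

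Now I chain the four exact sequences \eqref{eq:JSFseq1}--\eqref{eq:JSFseq4}. From \eqref{eq:JSFseq1}, both $A$ (as a submodule) and $C$ (as a quotient) of $\Theta_s \cok(c_x)$ satisfy $\mathrm{tmax}(A), \mathrm{tmax}(C) \leq \mathrm{tmax}(\Theta_s\cok(c_x)) \leq \mathrm{tmax}(\cok(c_x))$. From \eqref{eq:JSFseq3}, $D \hookrightarrow A$, so $\mathrm{tmax}(D) \leq \mathrm{tmax}(\cok(c_x))$. From \eqref{eq:JSFseq4}, the image of $B \to \nabla_x/p^m$ is isomorphic to $B/D$ and embeds into $\nabla_x/p^m$, so $\mathrm{tmax}(B/D) \leq m$; combined with $0 \to D \to B \to B/D \to 0$ this yields $\mathrm{tmax}(B) \leq \mathrm{tmax}(D) + m \leq \mathrm{tmax}(\cok(c_x)) + m$. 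Finally, from \eqref{eq:JSFseq2},
\[ \mathrm{tmax}\big(\cok(c_{xs})\big) \leq \mathrm{tmax}(B) + \mathrm{tmax}(C) \leq 2 \cdot \mathrm{tmax}\big(\cok(c_x)\big) + m , \]
which is the desired inequality.

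There is essentially no hard step here: once Theorem \ref{thm:JSF} is in hand, the only input needed is the basic behaviour of $\mathrm{tmax}$ on sub- and quotient modules and under exact $\Zp$-linear functors. The only mild subtlety worth writing out carefully is the factor of $2$, which arises because $\mathrm{tmax}(\cok(c_x))$ contributes once through $C$ in \eqref{eq:JSFseq2} and once through $D \subseteq A$ in \eqref{eq:JSFseq4} via $B$.
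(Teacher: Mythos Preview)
Your proof is correct and follows essentially the same route as the paper: translate to $\mathrm{tmax}$ via Lemma~\ref{lem:DimensionSumFiltrationLength}, then chain the sub/quotient bounds for $\mathrm{tmax}$ through the exact sequences \eqref{eq:JSFseq1}--\eqref{eq:JSFseq4}. The only cosmetic difference is your justification of $\mathrm{tmax}(\Theta_s N)\leq\mathrm{tmax}(N)$ via $\Zp$-linearity of $\Theta_s$, whereas the paper argues that $\Theta_s\cok(c_x)$ is a direct summand of a tensor product with $\cok(c_x)$; both are valid one-line observations.
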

\begin{proof}
	Recall from Lemma \ref{lem:DimensionSumFiltrationLength} that
	\[ \max\big\{ i \geq 0 \mathrel{\big|} \Delta(xs\Cdot \lambda)^i \neq 0 \big\} = \mathrm{tmax}\big( \cok(c_{xs}) \big) \]
	and
	\[ \max\big\{ i \geq 0 \mathrel{\big|} \Delta(x\Cdot \lambda)^i \neq 0 \big\} = \mathrm{tmax}\big( \cok(c_x) \big) . \]
	For a short exact sequence of torsion $\Zp$-modules $0 \to M^\prime \to M \to M^{\prime\prime} \to 0$, we have
	\[ \max\{ \mathrm{tmax}(M^\prime) , \mathrm{tmax}(M^{\prime\prime}) \} \leq \mathrm{tmax}(M) \leq \mathrm{tmax}(M^\prime) + \mathrm{tmax}(M^{\prime\prime}) , \]
	so \eqref{eq:JSFseq1}--\eqref{eq:JSFseq4} imply that
	\begin{multline*}
	\mathrm{tmax}\big( \cok(c_{xs}) \big) \leq  \mathrm{tmax}(B) + \mathrm{tmax}(C) \leq \mathrm{tmax}(D) + m + \mathrm{tmax}(C) \\ \leq \mathrm{tmax}(A) + m + \mathrm{tmax}(C) \leq 2 \cdot \mathrm{tmax}\big( \Theta_s \cok(c_x) \big) + m .
	\end{multline*}
	As $\Theta_s \cok(c_x)$ is a direct summand of a tensor product of $\cok(c_x)$ with some other $G_\Zp$-module, we have $\mathrm{tmax}\big( \Theta_s \cok(c_x) \big) \leq \mathrm{tmax}\big( \cok(c_x) \big)$ and it follows that
	\[ \mathrm{tmax}\big( \cok(c_{xs}) \big) \leq 2 \cdot \mathrm{tmax}\big( \cok(c_x) \big) + m , \]
	as required.
\end{proof}

\begin{Remark}
	If one assumes the Jantzen conjecture (stated as condition $(F,w,s)^+$ in Section II.C.9 of \cite{Jantzen}) then a result of H.H. Andersen shows that the composition multiplicities in the layers of the Jantzen filtration are given by coefficients of inverse parabolic Kazhdan-Lusztig polynomials, see Theorem 2.4 in \cite{AndersenJantzenFiltration}. In this case, it turns out that the factor $2$ in the upper bound in Corollary \ref{cor:upperbound} is superfluous. Note that the Jantzen conjecture would imply the Lusztig character formula, so this can only be expected to happen in the so-called \emph{Jantzen region} and when $p$ is very large.
\end{Remark}

\begin{Remark} \label{rem:AndersenBound}
	The following alternative way of obtaining an upper bound on the length of the Jantzen filtration has been pointed out to the author by H.H. Andersen: For $\nu \in X^+$, one can mimic the construction of the homomorphism $c_\nu \colon \Delta_\Zp(\nu) \to \nabla_\Zp(\nu)$ from Section II.8 in \cite{Jantzen} to define a homomorphism $c_\nu^\prime \colon \nabla_\Zp(\lambda) \to \Delta_\Zp(\lambda)$ such that
	\[ c_\nu \circ c_\nu^\prime = \Big( \prod_{\beta\in\Phi^+} \big( \langle \nu+\rho , \beta^\vee \rangle - 1 \big) ! \Big) \cdot \id_{\nabla_\Zp(\nu)} . \]
	Note that J.C. Jantzen works over the integers rather than the $p$-adic integers. The homomorphism corresponding to $c_\nu$ is called $\tilde{T}_{w_0}(w_0\Cdot \nu)$ and defined in Section II.8.16 in \cite{Jantzen}. In order to define the homomorphism $c_\nu^\prime$, one replaces the homomorphism from II.8.15(3) in \cite{Jantzen} by the homomorphism from II.8.15(4); the identity for the composition of $c_\nu$ with $c_\nu^\prime$ follows from the remark after II.8.15(4).
	Now there is a canonical surjective homomorphism from $\cok(c_\nu \circ c_\nu^\prime)$ to $\cok(c_\nu)$ and it follows that
	\begin{multline*}
		\hspace{.3cm} \max\{ i \mid \Delta(\nu)^i \neq 0 \} = \mathrm{tmax}\big( \cok(c_\nu) \big) \leq \mathrm{tmax}\big( \cok(c_\nu \circ c_\nu^\prime) \big) \\ = \nu_p\Big( \prod_{\beta\in\Phi^+} \big( \langle \nu+\rho , \beta^\vee \rangle - 1 \big) ! \Big) = \sum_{\beta\in\Phi^+} \sum_{i=1}^{\langle \nu+\rho , \beta^\vee \rangle - 1} \nu_p(i) . \hspace{.3cm}
	\end{multline*}
	If $\nu$ is $p$-regular, say $\nu \in x \Cdot C_\mathrm{fund}$ for some $x\in W_\mathrm{aff}^+$, then arguing as in the second paragraph of Section \ref{sec:rewriting}, we get
	\[ \max\{ i \mid \Delta(\nu)^i \neq 0 \} \leq \sum_{\beta\in\Phi^+} \sum_{i=1}^{\langle \nu+\rho , \beta^\vee \rangle} \nu_p(i) = \sum_{ s \in R_L(x) } \nu_p( m(s) \cdot p ) . \]
	If we further assume that $\langle \nu+\rho , \alpha_\mathrm{h}^\vee \rangle<p^2$ (so that $x\Cdot C_\mathrm{fund}$ lies in the lowest $p^2$-alcove) then $m(s)<p$ for all $s\in R_L(x)$ and it follows that
	$ \max\{ i \mid \Delta(\nu)^i \neq 0 \} \leq \abs{R_L(x)} = \ell(x) $.
\end{Remark}

Neither of the bounds on the length of the Jantzen filtration from Corollary \ref{cor:upperbound} and Remark \ref{rem:AndersenBound} is sharp, except in special cases. Note that Corollary \ref{cor:upperbound} can also be used to obtain an explicit bound on the length of the Jantzen filtration of a Weyl module $\Delta(\nu)$ with $\nu\in x \Cdot C_\mathrm{fund}$ for some $x\in W_\mathrm{aff}^+$ in terms of $\ell(x)$, but this bound is much bigger than the one that was given in Remark \ref{rem:AndersenBound}: By induction on $\ell(x)$, one obtains
\[ \max\{ i \mid \Delta(\nu)^i \neq 0 \} \leq 2^{\ell(x)} - 1 \]
when $x\Cdot C_\mathrm{fund}$ lies in the lowest $p^2$-alcove. Corollary \ref{cor:upperbound} becomes useful in situations where the length of the Jantzen filtration of a Weyl module (with $p$-regular highest weight) is already known and one wants to bound the length of the Jantzen filtration of a Weyl module with highest weight in an adjacent alcove.

\subsection{Euler characteristics} \label{subsec:Eulercharacteristic}

For finitely generated $G_{\Z_p}$-modules $M$ and $N$, the $\Z_p$-module $\Hom_{G_\Zp}(M,N)$ decomposes naturally as the direct sum of its torsion submodule $\Hom_{G_\Zp}(M,N)_\mathrm{tor}$ and its unique maximal (torsion-)free submodule $\Hom_{G_\Zp}(M,N)_\mathrm{fr}$. For all $i>0$, the $\Ext$-group $\Ext_{G_\Zp}^i(M,N)$ is a torsion $\Z_p$-module and following U. Kulkarni, we define the \emph{torsion Euler characteristic} by
\[ E(M,N) = \ell\big( \Hom_{G_\Zp}(M,N)_\mathrm{tor} \big) + \sum_{i>0} (-1)^i \cdot \ell\big( \Ext_{G_\Zp}(M,N) \big) . \]
Then U. Kulkarni shows in Section 1.4 of \cite{KulkarniJSF} that
$ \JSF_\mu = - \sum_{\lambda\in X^+} E\big( \Delta_{\Z_p}(\lambda) , \Delta_{\Z_p}(\mu) \big) \cdot \chi_\lambda $
for all $\mu\in X^+$,
which amounts to \emph{Kulkarni's formula}
\begin{equation} \label{eq:Kulkarni}
	\langle \chi_\lambda^* , \JSF_\mu \rangle = - E\big( \Delta_{\Z_p}(\lambda) , \Delta_{\Z_p}(\mu) \big) .
\end{equation}
For a tilting module $T$ with good filtration multiplicities $a_\mu \coloneqq \dim \Hom_G(\Delta(\mu),T)$ for $\mu\in X^+$, it is shown in the proof of Theorem 5.2 in \cite{AndersenKulkarni} that
\[ \langle \ASF_\lambda , \ch T \rangle = - \sum_{\mu\in X^+} a_\mu \cdot E\big( \Delta_{\Z_p}(\lambda) , \Delta_{\Z_p}(\mu) \big) , \]
so $\langle \ASF_\lambda , \chi_\mu \rangle = - E\big( \Delta_{\Z_p}(\lambda) , \Delta_{\Z_p}(\mu) \big) = \langle \chi_\lambda^* , \JSF_\mu \rangle$. (This is the representation theoretic explanation of the duality formula mentioned in Remark \ref{rem:singularduality}.)
Using the torsion Euler-characteristic, we can give yet another explanation for the recursion formula
\[ \JSF_{xs} = \nu_p( m(xsx^{-1}) \cdot p ) \cdot N_x + \JSF_x \cdot s , \]
via the following Lemma. (We return to the notational conventions set up before Theorem \ref{thm:JSF}.)

\begin{Lemma} \label{lem:recursionEulercharacteristic}
	Let $x,y\in W_\mathrm{aff}^+$ and $s\in S$ such that $x<xs\in W_\mathrm{aff}^+$. Then
	\[ E(\Delta_y,\Delta_{xs}) = E(\Delta_y,\Theta_s \Delta_x) - E(\Delta_y,\Delta_x) - \delta_{x,y} \cdot \nu_p\big( m(xsx^{-1}) \cdot p \big) \]
	and
	\[ E(\Delta_y,\Theta_s\Delta_x) = E(\Theta_s\Delta_y,\Delta_x) = \begin{cases}
	E(\Delta_y,\Delta_x) + E(\Delta_{ys},\Delta_x) & \text{if } ys\in W_\mathrm{aff}^+ , \\
	0 & \text{otherwise} .
	\end{cases} \]
\end{Lemma}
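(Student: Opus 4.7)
The plan is to express both sides of each identity in terms of a modified Euler characteristic $\tilde E(M, N) := \sum_{i \geq 0}(-1)^i \ell\big(\Ext_{G_\Zp}^i(M, N)\big)$ defined for $\Zp$-free $M$ and torsion $N$, for which additivity on short exact sequences in either variable is straightforward (for torsion $N$ all the Hom and Ext groups are torsion, and the alternating sum of lengths in a long exact sequence of finite $\Zp$-modules vanishes), and then to invoke Theorem~\ref{thm:JSF} together with the biadjunction between $T_\lambda^\mu$ and $T_\mu^\lambda$.

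The key reduction is: for any $\Zp$-free $G_\Zp$-module $M$ such that $\Ext_{G_\Zp}^i(M, \nabla_x) = 0$ for $i \geq 1$ and the induced map $(c_x)_* \colon \Hom(M, \Delta_x) \to \Hom(M, \nabla_x)$ is surjective, one has $E(M, \Delta_x) = -\tilde E(M, \cok(c_x))$, obtained from the long exact sequence of $\Hom(M, -)$ applied to $0 \to \Delta_x \xrightarrow{c_x} \nabla_x \to \cok(c_x) \to 0$. For $M = \Delta_y$ the surjectivity is witnessed by $\id_{\Delta_x} \mapsto c_x$ when $y = x$ (the target being zero otherwise), and the analogous reduction also holds with $\Delta_x, \nabla_x, c_x$ replaced by $\Theta_s \Delta_x, \Theta_s \nabla_x, \beta := \Theta_s c_x$, the surjectivity following via biadjunction from the Weyl-module case over $\Rep_\mu(G_\Zp)$ together with Lemma~\ref{lem:Andersen2}.

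For the first identity, apply $\tilde E(\Delta_y, -)$ to the Grothendieck-group identity
\[
[\cok(c_{xs})] = [\Theta_s \cok(c_x)] - [\cok(c_x)] + m \cdot [\nabla_x / p]
\]
from Theorem~\ref{thm:JSF}, where $m = \nu_p(m(xsx^{-1}) \cdot p)$. A direct computation using $0 \to \nabla_x \xrightarrow{\cdot p} \nabla_x \to \nabla_x/p \to 0$ and the $\Ext$-vanishing into $\nabla_x$ gives $\tilde E(\Delta_y, \nabla_x/p) = \delta_{x, y}$, and since $\Theta_s \cok(c_x) = \cok(\beta)$ by exactness of $\Theta_s$, the two forms of the reduction convert the remaining three terms into $-E(\Delta_y, \Delta_{xs})$, $-E(\Delta_y, \Theta_s \Delta_x)$, $-E(\Delta_y, \Delta_x)$ respectively, giving the first identity. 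For the second identity, $E(\Delta_y, \Theta_s \Delta_x) = E(\Theta_s \Delta_y, \Delta_x)$ is immediate from the self-adjointness of $\Theta_s$ (inherited from the biadjunction), which provides natural isomorphisms on every $\Hom$- and $\Ext^i$-group. If $ys \notin W_\mathrm{aff}^+$, then $y \Cdot \mu \notin X^+$ (using $p \geq h$), whence $T_\lambda^\mu \Delta_y = 0$ and $\Theta_s \Delta_y = 0$; if $ys \in W_\mathrm{aff}^+$, the standard short exact sequence on $\Theta_s \Delta_y$ with sub- and quotient modules from $\{\Delta_y, \Delta_{ys}\}$ is $\Zp$-free in all three terms, and additivity of $\tilde E(-, \cok(c_x))$ combined with the self-adjointness equality and the reduction yields $E(\Theta_s \Delta_y, \Delta_x) = E(\Delta_y, \Delta_x) + E(\Delta_{ys}, \Delta_x)$.

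The main technical obstacle is the verification of the surjectivity hypothesis of the reduction when $c_x$ is replaced by $\beta = \Theta_s c_x$: this requires an explicit construction of a generator of the free part of $\Hom(\Delta_y, \Theta_s \Delta_x)$ via the biadjunction and confirming, through the compatibility $T_\lambda^\mu c_x = c_{x \Cdot \mu}$ (up to a unit in $\Zp$) from Lemma~\ref{lem:Andersen2}, that this generator lifts the corresponding generator of $\Hom(\Delta_y, \Theta_s \nabla_x)$ under $\beta_*$.
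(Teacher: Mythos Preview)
Your argument is correct, but it takes a genuinely different route from the paper's proof.

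The paper proves the first identity directly from the short exact sequence $0 \to \Delta_{xs} \to \Theta_s \Delta_x \to \Delta_x \to 0$ by passing to the long exact sequence in $\Ext_{G_\Zp}^\bullet(\Delta_y,-)$. The only subtlety is at the Hom level: when $y=x$ the connecting map $\Hom(\Delta_x,\Theta_s\Delta_x)\to\Hom(\Delta_x,\Delta_x)$ is identified (via Lemma~\ref{lem:Andersen1}) with multiplication by $p^m$, and the resulting torsion cokernel $\Zp/p^m\Zp$ produces the term $\delta_{x,y}\cdot m$. The second identity is handled the same way using $0 \to \Delta_{ys} \to \Theta_s \Delta_y \to \Delta_y \to 0$ in the first variable. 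No appeal to Theorem~\ref{thm:JSF} is made; Lemma~\ref{lem:recursionEulercharacteristic} is meant to stand as an \emph{independent} representation-theoretic explanation of the recursion formula.

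Your approach instead introduces the auxiliary functional $\tilde E(M,-)$ on torsion modules, establishes the reduction $E(M,\Delta_x)=-\tilde E(M,\cok(c_x))$ (and its analogue for $\Theta_s\Delta_x$), and then reads off the first identity from the Grothendieck-group equation of Theorem~\ref{thm:JSF}. This is clean and correct, and it has the merit of making transparent that the two interpretations in Section~\ref{sec:interpretation} are really the same statement viewed through the Kulkarni identification $\langle N_y^*,\JSF_x\rangle=-E(\Delta_y,\Delta_x)$. The price is that your proof of Lemma~\ref{lem:recursionEulercharacteristic} is no longer independent of Theorem~\ref{thm:JSF}, which runs against the paper's stated goal of giving two separate explanations. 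For the second identity your method and the paper's essentially coincide: both exploit the short exact sequence on $\Theta_s\Delta_y$, with your version packaging the Hom-level case analysis into the surjectivity check for the reduction.
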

\begin{proof}
	First, the short exact sequence
	\[ 0 \longrightarrow \Delta_{xs} \longrightarrow \Theta_s \Delta_x \longrightarrow \Delta_x \longrightarrow 0  \]
	gives rise to a long exact sequence
	\begin{multline} \label{eq:longexact1}
	0 \to \Hom_{G_\Zp}(\Delta_y,\Delta_{xs}) \to \Hom_{G_\Zp}(\Delta_y,\Theta_s \Delta_x) \to \Hom_{G_\Zp}(\Delta_y,\Delta_x) \\ \to \Ext_{G_\Zp}^1(\Delta_y,\Delta_{xs}) \to \Ext_{G_\Zp}^1(\Delta_y,\Theta_s \Delta_x) \to \Ext_{G_\Zp}^1(\Delta_y,\Delta_x) \to \cdots
	\end{multline}
	where the three initial terms are zero unless $y\in\{x,xs\}$ (see Section 1.2 in \cite{AndersenFiltrationsTilting}).
	If $y=xs$ then the third term in \eqref{eq:longexact1} is zero and the first term is isomorphic to the second, so the first claim follows for all $y\neq x$. For $y=x$, we have
	\[ \Ext_{G_\Zp}^1( \Delta_x , \Theta_s \Delta_x ) \cong \Ext_{G_\Zp}^1( T_\lambda^\mu \Delta_x , T_\lambda^\mu \Delta_x ) \cong 0 \]
	and the $\Zp$-linear map from $\Hom_{G_\Zp}(\Delta_x,\Theta_s \Delta_x)\cong \Zp$ to $\Hom_{G_\Zp}(\Delta_x,\Delta_x)\cong \Zp$ identifies with multiplication by $p^m$ for $m=\nu_p\big( m(xsx^{-1}) \cdot p \big)$ by Lemma \ref{lem:Andersen1}. Thus the beginning of the long exact sequence \eqref{eq:longexact1} reduces to
	\[0 \to 0 \to \Zp \to \Zp \to \Zp / p^m \Zp \to 0 \to \cdots , \]
	and we conclude that
	\[ E(\Delta_x,\Delta_{xs}) = E(\Delta_x,\Theta_s\Delta_x) - E(\Delta_x,\Delta_x) - m . \]
	The first equality in the second claim follows from the adjunction between $T_\lambda^\mu$ and $T_\mu^\lambda$. If $ys\notin W_\mathrm{aff}^+$ then $\Theta_s \Delta_y=0$ and therefore
	$E(\Theta_s\Delta_y,\Delta_x) = 0$
	as claimed, so now suppose that $ys\in W_\mathrm{aff}^+$. In order to show the second equality, we may assume without loss of generality that $y<ys$ since $\Theta_s \Delta_y \cong \Theta_s \Delta_{ys}$.
	Then the short exact sequence
	\[ 0 \longrightarrow \Delta_{ys} \longrightarrow \Theta_s \Delta_y \longrightarrow \Delta_y \longrightarrow 0 \]
	gives rise to a long exact sequence
	\begin{multline} \label{eq:longexact2}
	0 \to \Hom_{G_\Zp}(\Delta_y,\Delta_x) \to \Hom_{G_\Zp}(\Theta_s\Delta_y,\Delta_x) \to \Hom_{G_\Zp}(\Delta_{ys},\Delta_x) \\ \to \Ext_{G_\Zp}^1(\Delta_y,\Delta_x) \to \Ext_{G_\Zp}^1(\Theta_s \Delta_y,\Delta_x) \to \Ext_{G_\Zp}^1(\Delta_{ys},\Delta_x) \to \cdots .
	\end{multline}
	Again, the three initial terms are zero unless $x\in\{ y , ys \}$ and the condition $y<ys$ implies that $x\neq ys$. If $x=y$ then the third term in \eqref{eq:longexact2} is zero and the first term is isomorphic to the second, so the second claim follows in all cases.
\end{proof}

\begin{Remark} \label{rem:recursionEulercharacteristic}
	Using Lemma \ref{lem:recursionEulercharacteristic} and Kulkarni's formula $\langle N_y^* , \JSF_x \rangle = - E( \Delta_y , \Delta_x )$ from \eqref{eq:Kulkarni}, we can recover the recursion formula as follows: Let $x,y\in W_\mathrm{aff}^+$ and $s\in S$ such that $x<xs \in W_\mathrm{aff}^+$. If $ys\in W_\mathrm{aff}^+$ then
	\begin{align*}
	\big\langle N_y^* , \JSF_{xs} \big\rangle & = - E( \Delta_y , \Delta_{xs} ) \\
	& = - E(\Delta_y,\Theta_s \Delta_x) + E(\Delta_y,\Delta_x) + \delta_{x,y} \cdot \nu_p\big( m(xsx^{-1}) \cdot p \big) \\
	& = - E(\Delta_{ys},\Delta_x) + \delta_{x,y} \cdot \nu_p\big( m(xsx^{-1}) \cdot p \big) \\
	& = \big\langle N_y^* \cdot s , \JSF_x \big\rangle + \big\langle N_y^* , \nu_p\big( m(xsx^{-1}) \cdot p \big) \cdot N_x \big\rangle \\
	&= \big\langle N_y^* , \JSF_x \cdot s + \nu_p\big( m(xsx^{-1}) \cdot p \big) \cdot N_x \big\rangle 
	\end{align*}
	and if $ys\notin W_\mathrm{aff}^+$ then
	\begin{align*}
	\big\langle N_y^* , \JSF_{xs} \big\rangle & = - E( \Delta_y , \Delta_{xs} ) \\
	& = - E(\Delta_y,\Theta_s \Delta_x) + E(\Delta_y,\Delta_x) + \delta_{x,y} \cdot \nu_p\big( m(xsx^{-1}) \cdot p \big) \\
	& = E(\Delta_y,\Delta_x) + \delta_{x,y} \cdot \nu_p\big( m(xsx^{-1}) \cdot p \big) \\
	& = \big\langle N_y^* \cdot s , \JSF_x \big\rangle + \big\langle N_y^* , \nu_p\big( m(xsx^{-1}) \cdot p \big) \cdot N_x \big\rangle \\
	& = \big\langle N_y^* , \JSF_x \cdot s + \nu_p\big( m(xsx^{-1}) \cdot p \big) \cdot N_x \big\rangle 
	\end{align*}
	since $N_y^* \cdot s= - N_y^*$. It follows that
	\[ \JSF_{xs} = \JSF_x \cdot s + \nu_p\big( m(xsx^{-1}) \cdot p \big) \cdot N_x , \]
	as claimed.
\end{Remark}

\section{An example} \label{sec:anexample}

Suppose that $G$ is of type $\mathrm{A}_n$ for some $n \geq 2$ and that $p \geq h = n+1$. Fix a labeling $\Delta=\{ \alpha_1 , \ldots , \alpha_n \}$ of the simple roots such that
\[ \langle \alpha_i , \alpha_j^\vee \rangle = \begin{cases} 2 & \text{if } i=j , \\ -1 & \text{if } \abs{i-j}=1 , \\ 0 & \text{otherwise} \end{cases} \]
and write $S=\{ s_0 , s_1 , \ldots, s_n \}$ with $s_0 = s_{\alpha_\mathrm{h},1}$ and $s_i = s_{\alpha_i}$ for $i=1,\ldots,n$. A product of two simple reflections $s_i$ and $s_j$ with $i < j$ has order $3$ if $j=i+1$ or $(i,j)=(0,n)$ and it has order $2$ otherwise; see Section 4.7 in \cite{HumphreysCoxeter}. We write $\omega_1,\ldots,\omega_n\in X^+$ for the fundamental dominant weights with respect to $\Delta$, that is $\langle \omega_i , \alpha_j^\vee \rangle = \delta_{ij}$, and fix the weight $\lambda=(p-n-1) \cdot \omega_n \in C_\mathrm{fund}$. 
We consider the elements $x_0 \coloneqq e$ and $x_i \coloneqq s_0 s_1 \cdots s_{i-1} \in W_\mathrm{aff}$ for $1\leq i \leq n$. By induction on $i$, one sees that
\[ s_1 \cdots s_{i-1} \Cdot \lambda = \lambda - \alpha_{i-1} - 2 \alpha_{i-2} - \cdots - (i-1) \alpha_1 = \lambda + \omega_i - i \cdot \omega_1 \eqqcolon \lambda_i \]
for $1 \leq i \leq n$. As $\alpha_\mathrm{h}= \alpha_1 + \cdots + \alpha_n = \omega_1 + \omega_n$ and $\alpha_\mathrm{h}^\vee = \alpha_1^\vee + \cdots + \alpha_n^\vee$, we further obtain
\[ \langle \lambda_i + \rho , \alpha_\mathrm{h}^\vee \rangle = \big\langle (p-n-1) \cdot \omega_n + \omega_i - i \cdot \omega_1 + \rho , \alpha_\mathrm{h}^\vee \big\rangle = p - i \]
and
\[ x_i \Cdot \lambda = s_0 \Cdot \lambda_i 
 = \lambda_i - \big( \langle \lambda_i + \rho , \alpha_\mathrm{h}^\vee \rangle - p \big) \cdot \alpha_\mathrm{h} 
 =  \lambda_i + i \cdot \alpha_\mathrm{h} 
 = \omega_i + (p-n-1+i) \cdot \omega_n
\]
for $1 \leq i \leq n$.
Note that $x_i \Cdot \lambda - x_{i-1} \Cdot \lambda = \alpha_i + \cdots + \alpha_n \eqqcolon \beta_i$ for $1 \leq i \leq n$. In fact, it is straightforward to see that $x_i\Cdot\lambda = s_{\beta_i,1} x_{i-1} \Cdot \lambda$ and therefore $x_{i-1} s_{i-1} x_{i-1}^{-1} = x_i x_{i-1}^{-1} = s_{\beta_i,1}$.
Also note that $x_n \Cdot \lambda = p \cdot \omega_n$ and therefore $x_n \Cdot C_\mathrm{fund} = C_\mathrm{fund} + p \cdot \omega_n$.

We can now compute $\JSF_{x_i}$ for $0 \leq i \leq n$ using the recursion formula and induction on $i$.

\begin{Lemma} \label{lem:JSFexample}
	For $0 \leq i \leq n$, we have $\JSF_{x_i} = \sum_{j=1}^{i} (-1)^{j+1} \cdot N_{x_{i-j}}$.
\end{Lemma}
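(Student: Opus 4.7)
The plan is to proceed by induction on $i$. For the base cases, $i=0$ gives $R_L(e)=\varnothing$, so $\JSF_{x_0}=0$ matches the empty sum; for $i=1$, we have $R_L(x_1)=R_L(s_0)=\{s_0\}$ with $m(s_0)=1$ since $s_0=s_{\alpha_\mathrm{h},1}$, and Corollary~\ref{cor:JSFRsetasph} yields $\JSF_{x_1}=\nu_p(p)\cdot N_e=N_{x_0}$.

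For the inductive step, assume $2\leq i\leq n$ and that the formula holds for $i-1$. First I would observe that the word $s_0s_1\cdots s_{i-1}$ is reduced for each $0\leq i\leq n$, so that $x_{i-1}<x_{i-1}s_{i-1}=x_i$ in the Bruhat order (this can be seen by checking that the alcoves $x_i\Cdot C_\mathrm{fund}$ are $n+1$ distinct dominant alcoves connected by single hyperplane crossings, forcing $\ell(x_i)=i$). This allows us to apply Theorem~\ref{thm:recursion} to $x_{i-1}$ and $s_{i-1}$: since $x_{i-1}s_{i-1}x_{i-1}^{-1}=s_{\beta_i,1}$, we have $m=1$ and $\nu_p(1\cdot p)=1$, giving
\[
\JSF_{x_i}=N_{x_{i-1}}+\JSF_{x_{i-1}}\cdot s_{i-1}=N_{x_{i-1}}+\sum_{j=1}^{i-1}(-1)^{j+1}N_{x_{i-1-j}s_{i-1}}
\]
by the inductive hypothesis and the identity $N_y\cdot s=N_{ys}$.

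The key step is then to prove that $N_{x_k s_{i-1}}=-N_{x_k}$ for every $0\leq k\leq i-2$. For $k=0$, we have $x_0s_{i-1}=s_{i-1}\in W_\mathrm{fin}$ (using $1\leq i-1\leq n-1$), so $N_{s_{i-1}}=\det(s_{i-1})\cdot N_e=-N_{x_0}$. For $1\leq k\leq i-2$, I claim that $s_{i-1}$ commutes with each of $s_0,s_1,\ldots,s_{k-1}$. By the description of the Coxeter graph of $\tilde{\mathrm{A}}_n$ recalled at the beginning of this section, $s_{i-1}$ and $s_j$ fail to commute precisely when $\{i-1,j\}$ is either $\{a,a+1\}$ for some $a$ or $\{0,n\}$. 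The first possibility is excluded because $|i-1-j|\geq i-k\geq 2$, and the second is excluded because $1\leq i-1\leq n-1$. Hence $x_ks_{i-1}=s_0\cdots s_{k-1}s_{i-1}=s_{i-1}x_k$ with $s_{i-1}\in W_\mathrm{fin}$, so $N_{x_ks_{i-1}}=\det(s_{i-1})\cdot N_{x_k}=-N_{x_k}$.

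Substituting these identities and reindexing $k=j+1$ gives
\[
\JSF_{x_i}=N_{x_{i-1}}+\sum_{j=1}^{i-1}(-1)^{j}N_{x_{i-1-j}}=\sum_{k=1}^{i}(-1)^{k+1}N_{x_{i-k}},
\]
completing the induction. The only genuinely delicate point is the commutation analysis in the Coxeter graph of $\tilde{\mathrm{A}}_n$, which crucially relies on the hypothesis $i\leq n$ (so that $s_{i-1}\in W_\mathrm{fin}$ and the cyclic adjacency $\{0,n\}$ never intervenes); the reducedness of the expressions for the $x_i$ is a routine alcove-geometry check.
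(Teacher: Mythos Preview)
Your proof is correct and follows essentially the same route as the paper: induction on $i$ via the recursion formula (Theorem~\ref{thm:recursion}), using the commutation of $s_{i-1}$ with $s_0,\ldots,s_{i-3}$ in the affine type $\tilde{\mathrm{A}}_n$ Coxeter graph to show $N_{x_k s_{i-1}}=-N_{x_k}$ for $k\leq i-2$. The only cosmetic differences are that you handle the base cases via Corollary~\ref{cor:JSFRsetasph} rather than the linkage principle and the recursion formula, and you spell out the reducedness and commutation checks more explicitly than the paper does.
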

\begin{proof}
	For $i=0$, we have $x_0 = e$ and $\Delta(x_0 \Cdot \mu) = \Delta(\mu) = L(\mu)$ for all $\mu \in C_\mathrm{fund} \cap X$ by the linkage principle, so $\JSF_{x_0} = 0$ as required. For $i=1$, the recursion formula from Theorem \ref{thm:recursion} yields
	\[ \JSF_{x_1} = \JSF_{s_0} = N_{s_0} + \JSF_e \cdot s_0 = N_{s_0}. \]
	Now suppose that $1<i\leq n$ and that the claim is true for $x_{i-1}$. We have $x_i = x_{i-1} s_{i-1} > x_{i-1}$ and $\nu_p\big( m( x_{i-1} s_{i-1} x_{i-1}^{-1} ) \cdot p \big) = 1$ because $x_{i-1} s_{i-1} x_{i-1}^{-1} = s_{\beta_i,1}$. By the induction hypothesis and the recursion formula, we have
	\[ \JSF_{x_i} = N_{x_{i-1}} + \JSF_{x_{i-1}} \cdot s_{i-1} = N_{x_{i-1}} + \sum_{j=1}^{i-1} (-1)^{j+1} \cdot N_{x_{i-1-j}} \cdot s_{i-1} . \]
	Now $s_{i-1}$ commutes with $s_{k-1}$ for all $k<i-1$, hence
	\[ N_{x_k} \cdot s_{i-1} = N_{x_k s_{i-1}} = N_{s_{i-1} x_k} = - N_{x_k} \]
	for $k < i-1$ and it follows that
	\[ \JSF_{x_i} = N_{x_{i-1}} + \sum_{j=1}^{i-1} (-1)^{j+1} \cdot N_{x_{i-1-j}} \cdot s_{i-1} = N_{x_{i-1}} - \sum_{j=1}^{i-1} (-1)^{j+1} \cdot N_{x_{i-1-j}} = \sum_{j=1}^{i} (-1)^{j+1} \cdot N_{x_{i-j}} , \]
	as claimed.
\end{proof}

Recall from Section \ref{sec:rewriting} that for every $\mu \in C_\mathrm{fund} \cap X$, there is an isomorphism between $M_\mathrm{asph}$ and the character lattice of $\Rep_\mu(G)$ that sends $N_x$ to $\chi(x\Cdot\mu)$ and $\JSF_x$ to $\JSF_{x\Cdot\mu}$ for all $x\in W_\mathrm{aff}^+$. Thus the equation in Lemma \ref{lem:JSFexample} can be rewritten as
\[ \sum_{j>0} \ch \Delta(x_i\Cdot\mu)^j = \JSF_{x_i \Cdot \mu} = \sum_{j=1}^{i} (-1)^{j+1} \cdot \chi(x_{i-j}\Cdot\mu) = \sum_{j=1}^{i} (-1)^{j+1} \cdot \ch \Delta(x_{i-j}\Cdot\mu) . \]
Using this observation, we can determine the composition series of the Weyl modules with highest weight in the alcoves $x_i \Cdot C_\mathrm{fund}$ for $1 \leq i \leq n$.

\begin{Lemma} \label{lem:examplecompositionseries}
	Let $\mu \in C_\mathrm{fund} \cap X$ and $1 \leq i \leq n$. In the Grothendieck group of $\Rep(G)$, we have
	\[ [ \Delta(x_i \Cdot\mu) ] = [ L(x_i\Cdot\mu) ] + [ L(x_{i-1}\Cdot\mu) ] \qquad \text{and} \qquad [ L(x_i\Cdot\mu) ] = \sum_{j=0}^i (-1)^j \cdot [ \Delta(x_{i-j}\Cdot\mu) ] . \]
\end{Lemma}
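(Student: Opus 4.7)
The plan is to induct on $i$. The base case $i=1$ is already essentially recorded: since $x_0=e$, the module $\Delta(x_0\Cdot\mu)=\Delta(\mu)=L(\mu)$ is simple by the linkage principle, so Lemma \ref{lem:JSFexample} gives $\sum_{j>0}\ch\Delta(x_1\Cdot\mu)^j = \ch L(x_0\Cdot\mu)$. Since the Jantzen filtration is descending and the total sum on the left is exactly the character of a simple module, I must have $\Delta(x_1\Cdot\mu)^1=L(x_0\Cdot\mu)$ (as a submodule) and $\Delta(x_1\Cdot\mu)^j=0$ for $j\geq 2$. Combining this with the identity $\ch\Delta(x_1\Cdot\mu)=\ch L(x_1\Cdot\mu)+\sum_{j>0}\ch\Delta(x_1\Cdot\mu)^j/\ldots$ — more precisely with $\Delta(x_1\Cdot\mu)/\Delta(x_1\Cdot\mu)^1\cong L(x_1\Cdot\mu)$ from Theorem \ref{thm:citeJSF} — yields the first equation.

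For the inductive step, assume the first equation for all $k<i$. By Lemma \ref{lem:JSFexample} (via the isomorphism $\psi_\mu$ discussed between that lemma and this one),
\[ \JSF_{x_i\Cdot\mu}=\sum_{j=1}^{i}(-1)^{j+1}\cdot\ch\Delta(x_{i-j}\Cdot\mu). \]
Using the induction hypothesis to rewrite each $\ch\Delta(x_k\Cdot\mu)=\ch L(x_k\Cdot\mu)+\ch L(x_{k-1}\Cdot\mu)$ for $k\geq 1$ and $\ch\Delta(x_0\Cdot\mu)=\ch L(x_0\Cdot\mu)$, the alternating sum telescopes: writing $\ell_k=\ch L(x_k\Cdot\mu)$, all interior contributions $\ell_{i-j}$ for $2\leq j\leq i-1$ cancel in pairs, the $\ell_0$ contributions cancel, and only the $\ell_{i-1}$ term survives. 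Hence
\[ \JSF_{x_i\Cdot\mu}=\ch L(x_{i-1}\Cdot\mu). \]

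Now the same argument as in the base case applies: $\sum_{j>0}\ch\Delta(x_i\Cdot\mu)^j=\ch L(x_{i-1}\Cdot\mu)$, and since each $\ch\Delta(x_i\Cdot\mu)^j$ is a $\Z_{\geq 0}$-combination of simple characters with $\Delta(x_i\Cdot\mu)^1\supseteq\Delta(x_i\Cdot\mu)^2\supseteq\cdots$, the sum collapses to a single simple character precisely when $\Delta(x_i\Cdot\mu)^1$ has character $\ch L(x_{i-1}\Cdot\mu)$ and $\Delta(x_i\Cdot\mu)^j=0$ for $j\geq 2$. Combining with $\Delta(x_i\Cdot\mu)/\Delta(x_i\Cdot\mu)^1\cong L(x_i\Cdot\mu)$ gives the first equation at level $i$. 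The second equation then follows by a routine inductive unwinding: assuming it for $i-1$,
\[ [L(x_i\Cdot\mu)] = [\Delta(x_i\Cdot\mu)]-[L(x_{i-1}\Cdot\mu)] = [\Delta(x_i\Cdot\mu)] - \sum_{j=0}^{i-1}(-1)^j[\Delta(x_{i-1-j}\Cdot\mu)] = \sum_{j=0}^{i}(-1)^j[\Delta(x_{i-j}\Cdot\mu)]. \]

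The only nontrivial step is verifying that the alternating sum collapses to $\ch L(x_{i-1}\Cdot\mu)$; this is a bookkeeping calculation rather than a genuine obstacle. The main conceptual point — that a Jantzen sum formula equal to a single simple character forces the Weyl module to have a length-$2$ composition series — is what does the real work.
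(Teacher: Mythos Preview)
Your proof is correct and follows essentially the same approach as the paper. The only cosmetic difference is that the paper invokes the second equation at level $i-1$ to identify $\JSF_{x_i\Cdot\mu}$ with $\ch L(x_{i-1}\Cdot\mu)$ in one step, whereas you use the first equation for all $k<i$ and telescope; these are equivalent rearrangements of the same induction.
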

\begin{proof}
	We prove the claim by induction on $i$. For $i=1$, we have
	\[ \sum_{j>0} \ch \Delta(x_1 \Cdot \mu)^j = \JSF_{x_1 \Cdot \mu} = \ch \Delta(\mu) = \ch L(\mu) \]
	because $\Delta(\mu)=L(\mu)$. This implies that $\Delta(x_1 \Cdot \mu)^1 \cong L(\mu)$ and therefore
	\[ [ \Delta(x_1 \Cdot \mu) ] = [ L(x_1\Cdot\mu) ] + [ L(x_0\Cdot\mu) ] \qquad \text{and} \qquad [ L(x_1 \Cdot \mu) ] = [ \Delta(x_1\Cdot\mu) ] - [ \Delta(x_0\Cdot\mu) ] , \]
	as claimed. Now suppose that $i>1$ and that $[ L(x_{i-1}\Cdot\mu) ] = \sum_{j=1}^i (-1)^{j+1} \cdot [ \Delta(x_{i-j}\Cdot\mu) ]$.
	Then we have
	\[ \sum_{j>0} \ch \Delta(x_i \Cdot \mu)^j = \JSF_{x_i \Cdot \mu} = \sum_{j=1}^{i} (-1)^{j+1} \cdot \ch \Delta(x_{i-j}\Cdot\mu) = \ch L(x_{i-1}\Cdot\mu) \]
	and it follows that $\Delta(x_i\Cdot\mu)^1 \cong L(x_{i-1} \Cdot \mu)$. As before, we conclude that
	\[ [ \Delta(x_i \Cdot\mu) ] = [ L(x_i\Cdot\mu) ] + [ L(x_{i-1}\Cdot\mu) ] \]
	and
	\[ [ L(x_i\Cdot\mu) ] = [ \Delta(x_i\Cdot\mu) ] - [ L(x_{i-1}\Cdot\mu) ] = \sum_{j=0}^i (-1)^j \cdot [ \Delta(x_{i-j}\Cdot\mu) ] , \]
	as required.
\end{proof}

\begin{Remark}
	Lemmas \ref{lem:JSFexample} and \ref{lem:examplecompositionseries} are actually special cases of a result of J.C.\ Jantzen, see the second paragraph of Section 7 in \cite{JantzenKontravarianteFormen}.
	For an element $x\in W_\mathrm{aff}^+$ with reduced expression $x = t_1 \cdots t_m$ and such that $x\Cdot C_\mathrm{fund}$ lies in the lowest $p^2$-alcove and the set $\{ y \in W_\mathrm{aff}^+ \mid y<x \}$ is linearly ordered with respect to the Bruhat order, J.C.\ Jantzen shows (for all types of root systems) that the Jantzen sum formula is given by
	\[ \JSF_x = \sum_{i=0}^{m-1} (-1)^{m-i-1} \cdot N_{t_1 \cdots t_i} . \]
	As in Lemma \ref{lem:examplecompositionseries}, one concludes that a Weyl module with highest weight in the alcove $x\Cdot C_\mathrm{fund}$ has precisely two composition factors, with highest weights in $x\Cdot C_\mathrm{fund}$ and $t_1 \cdots t_{m-1} \Cdot C_\mathrm{fund}$, respectively.
\end{Remark}

Let us conclude by illustrating the two upper bounds on the length of the Jantzen filtration from Corollary \ref{cor:upperbound} and Remark \ref{rem:AndersenBound}.

\begin{Remark}
	From the proof of Lemma \ref{lem:examplecompositionseries}, we see that $\Delta(x_i \Cdot \mu)^1 \cong L(x_{i-1} \Cdot \mu)$ and $\Delta(x_i \Cdot \mu)^2 = 0$ for all $1 \leq i \leq n$ and $\mu \in C_\mathrm{fund} \cap X$, so
	$ \max\big\{ j \mathrel{\big|} \Delta(x_i \Cdot \mu)^j \neq 0 \big\} = 1 $. We have $x_i < x_i s_n \in W_\mathrm{aff}^+$ and Corollary \ref{cor:upperbound} yields an upper bound on the length of the Jantzen filtration of $\Delta(x_i s_n \Cdot \mu)$:
	\[ \max\big\{ j \mathrel{\big|} \Delta(x_i s_n \Cdot \mu)^j \neq 0 \big\} \leq 2 \cdot 1 + 1 = 3 . \]
	For the same Weyl module, the bound from Remark \ref{rem:AndersenBound} yields
	\[ \max\big\{ j \mathrel{\big|} \Delta(x_i s_n \Cdot \mu)^j \neq 0 \big\} \leq \ell(x_i s_n) = i+1 . \]
	We claim that $\max\big\{ j \mathrel{\big|} \Delta(x_1 s_n \Cdot \mu)^j \neq 0 \big\} = 1$ and $\max\big\{ j \mathrel{\big|} \Delta(x_i s_n \Cdot \mu)^j \neq 0 \big\} = 2$ for $1 < i \leq n$. Indeed, by the recursion formula and Lemma \ref{lem:JSFexample}, we have
	\[ \JSF_{x_i s_n} = N_{x_i} + \JSF_{x_i} \cdot s_n = N_{x_i} + \sum_{j=1}^{i-1} (-1)^{j+1} \cdot N_{x_{i-j} s_n} + (-1)^i \cdot N_e \]
	and as in Lemma \ref{lem:examplecompositionseries}, it follows that $\Delta(x_1 s_n \Cdot \mu)^1 \cong L(x_1 \Cdot \mu)$ and $\Delta(x_1 s_n \Cdot \mu)^2 = 0$.
	Using Lemma \ref{lem:examplecompositionseries} and induction on $i$, one sees that
	\begin{equation} \label{eq:JSFexample1}
		\sum_{j>0} \ch \Delta(x_i s_n \Cdot \mu)^j = \ch L(x_i \Cdot \mu) + \ch L(x_{i-1} s_n \Cdot \mu) + 2 \cdot L(x_{i-1} \Cdot \mu) + \delta_{i,2} \cdot \ch L(\mu)
	\end{equation}
	and
	\begin{equation} \label{eq:JSFexample2}
	[ \Delta(x_i s_n \Cdot \mu) ] = [ L(x_i s_n \Cdot \mu) ] + [ L(x_i \Cdot \mu) ] + [ L(x_{i-1} s_n \Cdot \mu) ] + [ L(x_{i-1} s_n \Cdot \mu) ] + \delta_{i,2} \cdot [L(\mu)]
	\end{equation}
	for $1<i\leq n$, where $\delta_{i,2}=0$ if $i\neq 2$ and $\delta_{2,2}=1$. Note that \eqref{eq:JSFexample1} directly implies that the simple modules $L(x_i \Cdot \mu)$ and $L(x_{i-1} s_n \Cdot \mu)$ appear with multiplicity one in a composition series of the Weyl module $\Delta(x_i s_n \Cdot \mu)$ (and so  does $L(\mu)$ if $i=2$). The multiplicity of $L(x_{i-1} \Cdot \mu)$ in $\Delta(x_i s_n \Cdot \mu)$ equals the multiplicity of $L(x_{i-1} \Cdot \mu)$ in $\Delta(x_i \Cdot \mu)$ by Proposition 7.18 in \cite{Jantzen}, and the latter multiplicity is also equal to one by Lemma \ref{lem:examplecompositionseries}. Combining \eqref{eq:JSFexample1} and \eqref{eq:JSFexample2}, we see that $\Delta(x_i s_n \Cdot \mu)^2 \cong L(x_{i-1} \Cdot \mu)$ and $\Delta(x_i s_n \Cdot \mu)^3=0$, so $\max\big\{ j \mathrel{\big|} \Delta(x_i s_n \Cdot \mu)^j \neq 0 \big\} = 2$ as claimed.
\end{Remark}

\bibliographystyle{alpha}
\bibliography{JSF}
\bigskip

{ \footnotesize
\textsc{\'Ecole Polytechnique Federale de Lausanne, 1015 Lausanne, Switzerland}

\textit{E-mail address:} \texttt{jonathan.gruber@epfl.ch}
}

\end{document}